\theoremstyle{plain}
\newtheorem{theorem}{Theorem}[section]
\newtheorem{proposition}[theorem]{Proposition}
\newtheorem{lemma}[theorem]{Lemma}
\newtheorem{corollary}[theorem]{Corollary}
\newtheorem{claim}[theorem]{Claim}
\theoremstyle{definition}
\newtheorem{definition}[theorem]{Definition}
\theoremstyle{remark}
\newtheorem{remark}[theorem]{Remark}
\begin{document}

\title{Genericity of Fr\'echet smooth spaces}
\author{Ond\v{r}ej Kurka}
\thanks{The research was supported by the grant GA\v{C}R 201/09/0067.}
\address{Department of Mathematical Analysis, Faculty of Mathematics and Physics,
Charles University, Sokolovsk\'a 83, 186 75 Prague 8, Czech Republic}
\email{kurka.ondrej@seznam.cz}
\keywords{Fr\'echet smoothness, isometrically universal Banach space, monotone basis, Effros-Borel structure, well-founded tree}
\subjclass[2010]{Primary 46B04, 46B20; Secondary 46B15, 54H05}
\begin{abstract}
If a separable Banach space $ X $ contains an isometric copy of every separable reflexive Fr\'echet smooth Banach space, then $ X $ contains an isometric copy of every separable Banach space. The same conclusion holds if we consider separable Banach spaces with Fr\'echet smooth dual space. This improves a result of G.~Godefroy and N.~J.~Kalton.
\end{abstract}
\maketitle

\section{Introduction}

In 1968, W.~Szlenk \cite{szlenk} proved that a separable Banach space which is isomorphically universal for separable reflexive spaces has non-separable dual. Later, J.~Bourgain \cite{bourgain} proved that such a space is also isomorphically universal for all separable Banach spaces. Works of B.~Bossard \cite{bossard2} and of S.~A.~Argyros and P.~Dodos \cite{argyrosdodos} introduced new ways how to apply descriptive set theoretic methods to universality questions in Banach space theory. (For a survey on the subject, see \cite{dodostopics}, for an introduction, see \cite{godefroypersp}.)

The techniques from descriptive set theory provide an appropriate approach to universality questions indeed. By a recent result of P.~Dodos \cite{dodos}, the following two notions of genericity are equivalent for a class $ \mathcal{C} $ of separable Banach spaces:

(1) A separable Banach space which is isomorphically universal for $ \mathcal{C} $ is also isomorphically universal for all separable Banach spaces.

(2) Every analytic subset $ \mathcal{A} $ of the standard Borel space of separable Banach spaces containing all members of $ \mathcal{C} $ up to isomorphism must also contain an element which is isomorphically universal for all separable Banach spaces.

Note that the isometric analogies of these genericities can be considered (this is our case actually). As far as we know, it is not known whether Dodos' result holds in the isometric setting.

The method how to show that a class $ \mathcal{C} $ is generic was introduced by B.~Bossard in \cite{bossard2} and based on a previous work \cite{bossard1}. It consists in constructing a tree space such that every branch supports a universal space and every well-founded tree supports a space from $ \mathcal{C} $ (this is Theorem \ref{thmtree} for us). The existence of such a tree space leads quickly to the desired genericity result (this is Theorem \ref{thmmain} for us).

The present paper follows papers of G.~Godefroy \cite{godefroy} and of G.~Godefroy and N.~J.~Kalton \cite{godkal}. It was shown in \cite{godkal} that a separable Banach space which is isometrically universal for separable strictly convex Banach spaces is also isometrically universal for all separable Banach spaces. We show in Theorem \ref{thmmain} that it is possible to consider the spaces with Fr\'echet smooth dual or the reflexive Fr\'echet smooth spaces instead of strictly convex spaces. In particular, the isometric version of Bourgain's result is obtained.

It should be pointed out that our research was motived by \cite[Problem 1]{godefroy} which is solved now by \cite[Proposition 15]{godefroy} and Corollary \ref{cortree}. We were informed by G.~Godefroy that a result of A.~Szankowski \cite{szankowski} was overlooked in \cite{godefroy} and \cite{godkal}. It is shown in \cite{szankowski} that there exists a separable reflexive Banach space which is isometrically universal for all finite-dimensional spaces.

A reader interested in the connections between Banach space theory and descriptive set theory should know that a number of remarkable open problems is stated in \cite{godefroyprobl}. We would like to recall that it is an interesting problem to find an isometric version of the Argyros-Dodos \cite{argyrosdodos} amalgamation theory which would provide small isometrically universal spaces for small families of Banach spaces (and which would possibly include the result of Szankowski \cite{szankowski}).

\section*{Notions and notation}

Throughout the paper, Banach space means real Banach space (nevertheless, the results from Section 2 are valid in the complex setting as well). If $ X, Y, Z $ are Banach spaces such that $ Z = X \oplus Y $, then we identify the dual $ Z^{*} $ with $ X^{*} \oplus Y^{*} $ via
$$ (x^{*} + y^{*})(x + y) = x^{*}(x) + y^{*}(y), \quad \quad x \in X, y \in Y, x^{*} \in X^{*}, y^{*} \in Y^{*}. $$
In particular, a functional $ x^{*} \in X^{*} $ is viewed also as a functional from $ Z^{*} $. We usually denote the norm of $ x^{*} $ by $ \Vert x^{*} \Vert _{X} $ or by $ \Vert x^{*} \Vert _{Z} $ to indicate the space the norm is ment with respect to.

By $ \mathbb{N}^{< \mathbb{N}} $ we denote the set of all finite sequences of natural numbers, including the empty sequence $ \emptyset $. That is,
$$ \mathbb{N}^{< \mathbb{N}} = \bigcup _{l=0}^{\infty } \mathbb{N}^{l} $$
where $ \mathbb{N}^{0} = \{ \emptyset \} $. By $ \eta \subset \nu $ we mean that $ \eta $ is an initial segment of $ \nu $, i.e., the length $ l $ of $ \eta $ is less or equal to the length of $ \nu $ and $ \eta (i) = \nu (i) $ for $ 1 \leq i \leq l $. A subset $ T $ of $ \mathbb{N}^{< \mathbb{N}} $ is called a \emph{tree} if
$$ \eta \subset \nu \; \& \; \nu \in T \quad \Rightarrow \quad \eta \in T. $$ 
The set of all trees is denoted by $ \mathrm{Tr} $ and endowed with the topology induced by the topology of $ 2^{\mathbb{N}^{< \mathbb{N}}} $. We say that a tree $ T $ is \emph{ill-founded} if there exists an infinite sequence $ n_{1}, n_{2}, \dots $ of natural numbers such that $ (n_{1}, \dots , n_{k}) \in T $ for every $ k \in \mathbb{N} $. In the opposite case, we say that $ T $ is \emph{well-founded}.

A \emph{Polish space (topology)} means a separable completely metrizable space (topology). A set $ P $ equipped with a $ \sigma $-algebra is called a \emph{standard Borel space} if the $ \sigma $-algebra is generated by a Polish topology on $ P $. A subset of a standard Borel space is called \emph{analytic} if it is the Borel image of a Polish space.

For a topological space $ X $, the set $ \mathcal{F}(X) $ of all closed subsets of $ X $ is equipped with the \emph{Effros-Borel structure}, defined as the $ \sigma $-algebra generated by the sets
$$ \{ F \in \mathcal{F}(X) : F \cap U \neq \emptyset \} $$
where $ U $ varies over open subsets of $ X $.

The \emph{standard Borel space of separable Banach spaces} is defined by
$$ \mathcal{SE}(C([0,1])) = \big\{ F \in \mathcal{F}(C([0,1])) : \textrm{$ F $ is linear} \big\} . $$

For a system $ \{ x_{\eta } : \eta \in \mathbb{N}^{< \mathbb{N}} \} $ of elements of a Banach space, we define
$$ \sum _{\eta \in \mathbb{N}^{< \mathbb{N}}} x_{\eta } = \lim _{T} \sum _{\eta \in T} x_{\eta } \quad \quad \textrm{(if the limit exists)} $$
where the limit is taken over all finite trees $ T $ directed by inclusion.

The notions and notation we use but do not introduce here are classical and well explained e.g. in \cite{fhhmpz} and \cite{kechris}.

\section{Generalized $ \ell ^{2} $-sum}

In this section, we introduce a sum of Banach spaces which generalizes the common $ \ell ^{2} $-sum in the sense that the summed spaces can have non-trivial intersection. This allows to provide our conception of a tree space (Proposition \ref{proptree}).

\begin{definition} \label{defsum}
Let $ (X, \Vert \cdot \Vert _{X}) $ and $ (Y_{k}, \Vert \cdot \Vert _{Y_{k}}), k \in \mathbb{N}, $ be Banach spaces. For every $ k \in \mathbb{N} $, let $ \Vert \cdot \Vert _{X \oplus Y_{k}} $ be a norm on $ X \oplus Y_{k} $ which coincides with $ \Vert \cdot \Vert _{X} $ on $ X $ and with $ \Vert \cdot \Vert _{Y_{k}} $ on $ Y_{k} $ and which, moreover, is monotone in the sense that
$$ \Vert x + y_{k} \Vert _{X \oplus Y_{k}} \geq \Vert x \Vert _{X}, \quad \quad x \in X, y_{k} \in Y_{k}. $$
We put
\begin{align*}
\Lambda (X \oplus Y_{k}) = \Big\{ x^{*} + \sum _{k \in \mathbb{N}} \alpha _{k} y_{k}^{*} : & \; x^{*} \in X^{*}, y_{k}^{*} \in Y_{k}^{*}, \\
 & \; \Vert x^{*} + y_{k}^{*} \Vert _{X \oplus Y_{k}} \leq 1, \; 0 \leq \alpha _{k} \leq 1, \; \sum _{k \in \mathbb{N}} \alpha _{k}^{2} \leq 1 \Big\} .
\end{align*}
We define space $ (\Sigma (X \oplus Y_{k}), \Vert \cdot \Vert _{\Sigma }) $ by
$$ \Sigma (X \oplus Y_{k}) = \Big\{ x + y_{1} + y_{2} + \dots \in X \oplus Y_{1} \oplus Y_{2} \oplus \dots : \sum _{k \in \mathbb{N}} \Vert y_{k} \Vert _{Y_{k}}^{2} < \infty \Big\} , $$
$$ \Vert z \Vert _{\Sigma } = \Vert z \Vert _{\Sigma (X \oplus Y_{k})} = \sup \big\{ \vert z^{*}(z) \vert : z^{*} \in \Lambda (X \oplus Y_{k}) \big\} , \quad \quad z \in \Sigma (X \oplus Y_{k}). $$
\end{definition}

\begin{lemma} \label{lemequiv}
{\rm (A)} We have
$$ \max \Big\{ \Vert x \Vert _{X}, \frac{1}{2} \Big( \sum _{k \in \mathbb{N}} \Vert y_{k} \Vert _{Y_{k}}^{2} \Big) ^{1/2} \Big\} \leq \Big\Vert x + \sum _{k \in \mathbb{N}} y_{k} \Big\Vert _{\Sigma } \leq \Vert x \Vert _{X} + \Big( \sum _{k \in \mathbb{N}} \Vert y_{k} \Vert _{Y_{k}}^{2} \Big) ^{1/2}. $$
In particular, $ (\Sigma (X \oplus Y_{k}), \Vert \cdot \Vert _{\Sigma }) $ is isomorphic to the standard $ \ell ^{2} $-sum of the spaces $ X, Y_{1}, Y_{2}, \dots $ .

{\rm (B)} The dual norm of $ \Vert \cdot \Vert _{\Sigma } $ fulfills
$$ \max \Big\{ \Vert x^{*} \Vert _{X}, \Big( \sum _{k \in \mathbb{N}} \Vert y_{k}^{*} \Vert _{Y_{k}}^{2} \Big) ^{1/2} \Big\} \leq \Big\Vert x^{*} + \sum _{k \in \mathbb{N}} y_{k}^{*} \Big\Vert _{\Sigma } \leq \Vert x^{*} \Vert _{X} + 2 \Big( \sum _{k \in \mathbb{N}} \Vert y_{k}^{*} \Vert _{Y_{k}}^{2} \Big) ^{1/2}. $$
\end{lemma}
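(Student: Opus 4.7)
The plan is to first establish how the monotonicity hypothesis controls the extensions of functionals from $ X^{*} $ and $ Y_{k}^{*} $ to $ (X \oplus Y_{k})^{*} $, and then to derive each of the four bounds by testing $ \Vert \cdot \Vert _{\Sigma } $ against a suitable element or functional provided by Hahn--Banach.

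Monotonicity says that the projection $ X \oplus Y_{k} \to X $ is a contraction, so dually $ \Vert x^{*} \Vert _{X \oplus Y_{k}} = \Vert x^{*} \Vert _{X} $ for $ x^{*} \in X^{*} $, and by restriction $ \Vert x^{*} \Vert _{X}, \Vert y_{k}^{*} \Vert _{Y_{k}} \leq \Vert x^{*} + y_{k}^{*} \Vert _{X \oplus Y_{k}} $. On the other hand, for any $ x + y \in X \oplus Y_{k} $ with $ \Vert x + y \Vert _{X \oplus Y_{k}} \leq 1 $, the triangle inequality combined with monotonicity gives $ \Vert y \Vert _{Y_{k}} \leq \Vert x + y \Vert _{X \oplus Y_{k}} + \Vert x \Vert _{X} \leq 2 $, so $ \Vert y_{k}^{*} \Vert _{X \oplus Y_{k}} \leq 2 \Vert y_{k}^{*} \Vert _{Y_{k}} $ for $ y_{k}^{*} \in Y_{k}^{*} $. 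This factor of two is the origin of the constants $ 1/2 $ in (A) and $ 2 $ in (B), and forces the witnessing $ y_{k}^{*} $ in the lower bound of (A) to be scaled so that $ \Vert y_{k}^{*} \Vert _{Y_{k}} = 1/2 $ rather than $ 1 $, since the definition of $ \Lambda (X \oplus Y_{k}) $ insists on a bound on $ \Vert x^{*} + y_{k}^{*} \Vert _{X \oplus Y_{k}} $ rather than on $ \Vert y_{k}^{*} \Vert _{Y_{k}} $.

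For (A), the upper bound is direct: for any $ z^{*} = x^{*} + \sum \alpha _{k} y_{k}^{*} \in \Lambda (X \oplus Y_{k}) $ one has $ \vert z^{*}(x + \sum y_{j}) \vert \leq \Vert x \Vert _{X} + \sum \alpha _{k} \Vert y_{k} \Vert _{Y_{k}} \leq \Vert x \Vert _{X} + (\sum \Vert y_{k} \Vert ^{2})^{1/2} $ by Cauchy--Schwarz. The lower bound $ \Vert \cdot \Vert _{\Sigma } \geq \Vert x \Vert _{X} $ is witnessed by a norming $ x^{*} \in X^{*} $, and the lower bound $ \tfrac{1}{2} (\sum \Vert y_{k} \Vert ^{2})^{1/2} $ by picking $ y_{k}^{*} \in Y_{k}^{*} $ of $ Y_{k} $-norm $ 1/2 $ with $ y_{k}^{*}(y_{k}) = \tfrac{1}{2} \Vert y_{k} \Vert _{Y_{k}} $ (so that $ \Vert y_{k}^{*} \Vert _{X \oplus Y_{k}} \leq 1 $), taking $ \alpha _{k} = \Vert y_{k} \Vert _{Y_{k}} / (\sum \Vert y_{j} \Vert ^{2})^{1/2} $, and evaluating $ z^{*} = \sum \alpha _{k} y_{k}^{*} \in \Lambda (X \oplus Y_{k}) $ at $ x + \sum y_{j} $.

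For (B), the upper bound splits $ (x^{*} + \sum y_{k}^{*})(x + \sum y_{j}) = x^{*}(x) + \sum y_{k}^{*}(y_{k}) $ and bounds the two parts by $ \Vert x^{*} \Vert _{X} \Vert z \Vert _{\Sigma } $ and, via Cauchy--Schwarz together with the lower estimate in (A), by $ 2 (\sum \Vert y_{k}^{*} \Vert ^{2})^{1/2} \Vert z \Vert _{\Sigma } $. For the lower bound, testing against a norming $ x \in X $ (whose $ \Sigma $-norm equals its $ X $-norm by (A)) yields $ \Vert x^{*} \Vert _{X} $, while testing against $ \sum \beta _{k} y_{k} $ with $ y_{k} \in Y_{k} $ almost norming for $ y_{k}^{*} $ and $ \beta _{k} = \Vert y_{k}^{*} \Vert _{Y_{k}} / (\sum \Vert y_{j}^{*} \Vert ^{2})^{1/2} $ (so that $ \Vert \sum \beta _{k} y_{k} \Vert _{\Sigma } \leq 1 $ by (A)) yields approximately $ (\sum \Vert y_{k}^{*} \Vert ^{2})^{1/2} $. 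The only real delicacy throughout is to keep the asymmetric factor of two correctly placed; the rest is bookkeeping.
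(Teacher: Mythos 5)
Your proof is correct and takes essentially the same route as the paper for part (A): norm the upper bound via Cauchy--Schwarz, and norm the lower bound with a Hahn--Banach functional from $X^*$ and with $y_k^*$ scaled to $Y_k$-norm $1/2$ (precisely to ensure $\Vert y_k^* \Vert_{X \oplus Y_k} \leq 1$). For (B) the paper says only that it ``follows'' from (A), implicitly by duality between the $\ell^1$- and $\ell^\infty$-type bounds; your direct verification of (B) by testing against norming elements of $X$ and $\ell^2$-weighted almost-norming elements of the $Y_k$ is a sound and slightly more explicit version of the same observation.
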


\begin{proof}
It is sufficient to prove (A) because (B) follows. Let $ z = x + \sum _{k \in \mathbb{N}} y_{k} $ where $ \sum _{k \in \mathbb{N}} \Vert y_{k} \Vert _{Y_{k}}^{2} < \infty $. For an element $ z^{*} $ of $ \Lambda (X \oplus Y_{k}) $, represented by $ z^{*} = x^{*} + \sum _{k \in \mathbb{N}} \alpha _{k} y_{k}^{*} $, we have (note that $ \Vert x^{*} + y_{k}^{*} \Vert _{X \oplus Y_{k}} \leq 1 $ implies $ \Vert x^{*} \Vert _{X} \leq 1 $ and $ \Vert y_{k}^{*} \Vert _{Y_{k}} \leq 1 $)
\begin{eqnarray*}
\vert z^{*}(z) \vert & = & \Big\vert x^{*}(x) + \sum _{k \in \mathbb{N}} \alpha _{k} y_{k}^{*}(y_{k}) \Big\vert \\
 & \leq & \Vert x^{*} \Vert _{X} \Vert x \Vert _{X} + \sum _{k \in \mathbb{N}} \alpha _{k} \Vert y_{k}^{*} \Vert _{Y_{k}} \Vert y_{k} \Vert _{Y_{k}} \\
 & \leq & \Vert x \Vert _{X} + \Big( \sum _{k \in \mathbb{N}} \alpha _{k}^{2} \Big) ^{1/2} \Big( \sum _{k \in \mathbb{N}} \Vert y_{k} \Vert _{Y_{k}}^{2} \Big) ^{1/2} \\
 & \leq & \Vert x \Vert _{X} + \Big( \sum _{k \in \mathbb{N}} \Vert y_{k} \Vert _{Y_{k}}^{2} \Big) ^{1/2}.
\end{eqnarray*}
Therefore,
$$ \Vert z \Vert _{\Sigma } \leq \Vert x \Vert _{X} + \Big( \sum _{k \in \mathbb{N}} \Vert y_{k} \Vert _{Y_{k}}^{2} \Big) ^{1/2}. $$
Let $ x^{*} \in X^{*} $ be a functional such that $ \Vert x^{*} \Vert _{X} = 1 $ and $ x^{*}(x) = \Vert x \Vert _{X} $. We have also $ \Vert x^{*} \Vert _{X \oplus Y_{k}} = 1 $ for each $ k \in \mathbb{N} $ (as $ \vert x^{*}(x' + y_{k}') \vert = \vert x^{*}(x') \vert \leq \Vert x^{*} \Vert _{X} \Vert x' \Vert _{X} = \Vert x' \Vert _{X} \leq \Vert x' + y_{k}' \Vert _{X \oplus Y_{k}} $). So, $ x^{*} \in \Lambda (X \oplus Y_{k}) $ and
$$ \Vert z \Vert _{\Sigma } \geq x^{*}(z) = x^{*}(x) = \Vert x \Vert _{X}. $$
Further, let $ y_{k}^{*} \in Y_{k}^{*}, k \in \mathbb{N}, $ be functionals such that $ \Vert y_{k}^{*} \Vert _{Y_{k}} = 1/2 $ and $ y_{k}^{*}(y_{k}) = (1/2)\Vert y_{k} \Vert _{Y_{k}} $. We have $ \Vert y_{k}^{*} \Vert _{X \oplus Y_{k}} \leq 1 $ (as $ \vert y_{k}^{*}(x' + y_{k}') \vert = \vert y_{k}^{*}(y_{k}') \vert \leq \Vert y_{k}^{*} \Vert _{Y_{k}} \Vert y_{k}' \Vert _{Y_{k}} = (1/2)\Vert y_{k}' \Vert _{Y_{k}} \leq (1/2)(\Vert x' + y_{k}' \Vert _{X \oplus Y_{k}} + \Vert -x' \Vert _{X \oplus Y_{k}}) \leq \Vert x' + y_{k}' \Vert _{X \oplus Y_{k}} $). If we set
$$ \alpha _{k} = \Big( \sum _{j \in \mathbb{N}} \Vert y_{j} \Vert _{Y_{j}}^{2} \Big) ^{-1/2} \Vert y_{k} \Vert _{Y_{k}}, $$
then we obtain
\begin{eqnarray*}
\Vert z \Vert _{\Sigma } & \geq & \Big( \sum _{k \in \mathbb{N}} \alpha _{k} y_{k}^{*} \Big) (z) \\
 & = & \sum _{k \in \mathbb{N}} \alpha _{k} y_{k}^{*}(y_{k}) \\
 & = & \Big( \sum _{j \in \mathbb{N}} \Vert y_{j} \Vert _{Y_{j}}^{2} \Big) ^{-1/2} \sum _{k \in \mathbb{N}} \Vert y_{k} \Vert _{Y_{k}} (1/2)\Vert y_{k} \Vert _{Y_{k}} \\
 & = & \frac{1}{2} \Big( \sum _{k \in \mathbb{N}} \Vert y_{k} \Vert _{Y_{k}}^{2} \Big) ^{1/2}.
\end{eqnarray*}
\end{proof}

\begin{lemma} \label{lemproj}
Let $ P_{X} : X \rightarrow X $ and $ P_{Y_{k}} : Y_{k} \rightarrow Y_{k}, k \in \mathbb{N}, $ be projections with $ \Vert P_{X} + P_{Y_{k}} \Vert _{X \oplus Y_{k}} \leq 1, k \in \mathbb{N} $ (by $ P_{X} + P_{Y_{k}} $ we mean $ x + y_{k} \mapsto P_{X}x + P_{Y_{k}}y_{k} $).

{\rm (A)} The projection
$$ P : x + \sum _{k \in \mathbb{N}} y_{k} \mapsto P_{X}x + \sum _{k \in \mathbb{N}} P_{Y_{k}}y_{k} $$
fulfills $ \Vert P \Vert _{\Sigma } \leq 1 $.

{\rm (B)} We have
$$ \Vert z \Vert _{\Sigma (P_{X}X \oplus P_{Y_{k}}Y_{k})} = \Vert z \Vert _{\Sigma (X \oplus Y_{k})}, \quad \quad z \in \Sigma (P_{X}X \oplus P_{Y_{k}}Y_{k}), $$
i.e., for the elements of $ \Sigma (P_{X}X \oplus P_{Y_{k}}Y_{k}) $, it does not matter whether we consider the norm $ \Vert \cdot \Vert _{\Sigma } $ with respect to the spaces $ P_{X}X, P_{Y_{1}}Y_{1}, P_{Y_{2}}Y_{2}, \dots $ or the spaces $ X, Y_{1}, Y_{2}, \dots $ .
\end{lemma}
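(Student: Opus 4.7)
My plan for part (A) is to dualise. Once we know that the adjoint $ P^{*} $ sends $ \Lambda (X \oplus Y_{k}) $ into itself, the inequality $ \Vert P \Vert _{\Sigma } \leq 1 $ follows from
$$ \vert z^{*}(Pz) \vert = \vert (P^{*}z^{*})(z) \vert \leq \Vert z \Vert _{\Sigma } $$
by taking the supremum over $ z^{*} \in \Lambda (X \oplus Y_{k}) $. Given a representative $ z^{*} = x^{*} + \sum _{k} \alpha _{k} y_{k}^{*} \in \Lambda (X \oplus Y_{k}) $, direct computation yields $ P^{*}z^{*} = P_{X}^{*}x^{*} + \sum _{k} \alpha _{k} P_{Y_{k}}^{*}y_{k}^{*} $, with the $ \alpha _{k} $'s untouched. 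Hence the only condition to verify is $ \Vert P_{X}^{*}x^{*} + P_{Y_{k}}^{*}y_{k}^{*} \Vert _{X \oplus Y_{k}} \leq 1 $, and this follows from the identity $ P_{X}^{*}x^{*} + P_{Y_{k}}^{*}y_{k}^{*} = (P_{X} + P_{Y_{k}})^{*}(x^{*} + y_{k}^{*}) $ together with the hypothesis $ \Vert P_{X} + P_{Y_{k}} \Vert _{X \oplus Y_{k}} \leq 1 $.

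For part (B), I would first observe that $ P_{X}X \oplus P_{Y_{k}}Y_{k} $, endowed with the restriction of $ \Vert \cdot \Vert _{X \oplus Y_{k}} $, still satisfies the monotonicity clause of Definition \ref{defsum} (monotonicity passes automatically to subspaces), so $ \Sigma (P_{X}X \oplus P_{Y_{k}}Y_{k}) $ and $ \Lambda (P_{X}X \oplus P_{Y_{k}}Y_{k}) $ are well defined. Then I would prove the two required inequalities by transporting functionals between the two $ \Lambda $ sets. For $ \Vert z \Vert _{\Sigma (X \oplus Y_{k})} \leq \Vert z \Vert _{\Sigma (P_{X}X \oplus P_{Y_{k}}Y_{k})} $: given $ z^{*} = x^{*} + \sum _{k} \alpha _{k} y_{k}^{*} \in \Lambda (X \oplus Y_{k}) $, simply restrict each component to the corresponding projected subspace. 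Restrictions do not increase norms, so the restricted functional still lies in the relevant $ \Lambda $ set, and it coincides with $ z^{*} $ on $ z $. For the reverse inequality: given $ z^{*} = x_{0}^{*} + \sum _{k} \alpha _{k} y_{k,0}^{*} \in \Lambda (P_{X}X \oplus P_{Y_{k}}Y_{k}) $, the extensions $ \tilde{x}^{*} = x_{0}^{*} \circ P_{X} $ and $ \tilde{y}_{k}^{*} = y_{k,0}^{*} \circ P_{Y_{k}} $ satisfy $ \tilde{x}^{*} + \tilde{y}_{k}^{*} = (x_{0}^{*} + y_{k,0}^{*}) \circ (P_{X} + P_{Y_{k}}) $, which has norm at most $ 1 $ on $ X \oplus Y_{k} $ by the hypothesis on $ P_{X} + P_{Y_{k}} $. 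Hence the assembled functional $ \tilde{x}^{*} + \sum _{k} \alpha _{k} \tilde{y}_{k}^{*} $ belongs to $ \Lambda (X \oplus Y_{k}) $, and it agrees with $ z^{*} $ on $ z $ because $ Pz = z $ for $ z $ in the subspace.

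I do not anticipate any serious obstacle. The whole argument is the dualisation of part (A) combined with the standard restriction/precomposition correspondence between functionals on a space and on its range under a norm-one projection. The only items requiring care are the identity $ P^{*}(x^{*} + y_{k}^{*}) = (P_{X} + P_{Y_{k}})^{*}(x^{*} + y_{k}^{*}) $ when these are viewed as functionals on $ X \oplus Y_{k} $, and the verification that the restriction of $ \Vert \cdot \Vert _{X \oplus Y_{k}} $ inherits the structural requirements of Definition \ref{defsum}.
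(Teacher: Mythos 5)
Your argument is correct and follows the paper's proof essentially verbatim: part (A) is established by showing $P^{*}$ maps $\Lambda(X \oplus Y_{k})$ into itself via the identity $P_{X}^{*}x^{*} + P_{Y_{k}}^{*}y_{k}^{*} = (P_{X} + P_{Y_{k}})^{*}(x^{*} + y_{k}^{*})$, and part (B) by transporting functionals in both directions between $\Lambda(P_{X}X \oplus P_{Y_{k}}Y_{k})$ and $\Lambda(X \oplus Y_{k})$ using restriction in one direction and precomposition with $P$ in the other. Your extra observation that the restricted norms on $P_{X}X \oplus P_{Y_{k}}Y_{k}$ inherit the monotonicity requirement of Definition \ref{defsum} is a small point the paper leaves implicit, but it is indeed worth noting.
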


\begin{proof}
(A) We prove first the implication
$$ z^{*} \in \Lambda (X \oplus Y_{k}) \quad \Rightarrow \quad P^{*}z^{*} \in \Lambda (X \oplus Y_{k}). $$
So, let $ z^{*} \in \Lambda (X \oplus Y_{k}) $ and let $ z^{*} $ be represented by $ z^{*} = x^{*} + \sum _{k \in \mathbb{N}} \alpha _{k} y_{k}^{*} $. Since
\begin{eqnarray*}
\Vert P^{*}_{X} x^{*} + P^{*}_{Y_{k}} y_{k}^{*} \Vert _{X \oplus Y_{k}} & = & \Vert (P_{X} + P_{Y_{k}})^{*}(x^{*} + y_{k}^{*}) \Vert _{X \oplus Y_{k}} \\
 & \leq & \Vert (P_{X} + P_{Y_{k}})^{*} \Vert _{X \oplus Y_{k}} \Vert x^{*} + y_{k}^{*} \Vert _{X \oplus Y_{k}} \\
 & \leq & 1,
\end{eqnarray*}
we have
$$ P^{*}z^{*} = P^{*}_{X} x^{*} + \sum _{k \in \mathbb{N}} \alpha _{k} P^{*}_{Y_{k}} y_{k}^{*} \in \Lambda (X \oplus Y_{k}), $$
and the implication is proved.

Now, for $ z \in \Sigma (X \oplus Y_{k}) $, we obtain
$$ \Vert Pz \Vert _{\Sigma } = \sup \big\{ \vert P^{*}z^{*}(z) \vert : z^{*} \in \Lambda (X \oplus Y_{k}) \big\} \leq \sup \big\{ \vert z^{*}(z) \vert : z^{*} \in \Lambda (X \oplus Y_{k}) \big\} = \Vert z \Vert _{\Sigma }. $$

(B) Let $ z = Pz \in \Sigma (P_{X}X \oplus P_{Y_{k}}Y_{k}) $. We want to show that $ r = s $ where
$$ r = \sup \big\{ \vert z^{*}(z) \vert : z^{*} \in \Lambda (P_{X}X \oplus P_{Y_{k}}Y_{k}) \big\} , $$
$$ s = \sup \big\{ \vert z^{*}(z) \vert : z^{*} \in \Lambda (X \oplus Y_{k}) \big\} . $$
Similarly as above, we prove first the implication
$$ z^{*} \in \Lambda (P_{X}X \oplus P_{Y_{k}}Y_{k}) \quad \Rightarrow \quad z^{*} \circ P \in \Lambda (X \oplus Y_{k}). $$
So, let $ z^{*} \in \Lambda (P_{X}X \oplus P_{Y_{k}}Y_{k}) $ and let $ z^{*} $ be represented by $ z^{*} = x^{*} + \sum _{k \in \mathbb{N}} \alpha _{k} y_{k}^{*} $. Since
\begin{eqnarray*}
\Vert x^{*} \circ P_{X} + y_{k}^{*} \circ P_{Y_{k}} \Vert _{X \oplus Y_{k}} & = & \Vert (x^{*} + y_{k}^{*}) \circ (P_{X} + P_{Y_{k}}) \Vert _{X \oplus Y_{k}} \\
 & \leq & \Vert x^{*} + y_{k}^{*} \Vert _{P_{X}X \oplus P_{Y_{k}}Y_{k}} \Vert P_{X} + P_{Y_{k}} \Vert _{X \oplus Y_{k}} \\
 & \leq & 1,
\end{eqnarray*}
we have
$$ z^{*} \circ P = x^{*} \circ P_{X} + \sum _{k \in \mathbb{N}} \alpha _{k} y_{k}^{*} \circ P_{Y_{k}} \in \Lambda (X \oplus Y_{k}), $$
and the implication is proved. Now, we obtain
$$ r = \sup \big\{ \vert z^{*}(Pz) \vert : z^{*} \in \Lambda (P_{X}X \oplus P_{Y_{k}}Y_{k}) \big\} \leq \sup \big\{ \vert z^{*}(z) \vert : z^{*} \in \Lambda (X \oplus Y_{k}) \big\} = s. $$

To show the opposite inequality, we prove first the implication
$$ z^{*} \in \Lambda (X \oplus Y_{k}) \quad \Rightarrow \quad z^{*}|_{\Sigma (P_{X}X \oplus P_{Y_{k}}Y_{k})} \in \Lambda (P_{X}X \oplus P_{Y_{k}}Y_{k}). $$
So, let $ z^{*} \in \Lambda (X \oplus Y_{k}) $ and let $ z^{*} $ be represented by $ z^{*} = x^{*} + \sum _{k \in \mathbb{N}} \alpha _{k} y_{k}^{*} $. Since clearly
$$ \Vert x^{*}|_{P_{X}X} + y_{k}^{*}|_{P_{Y_{k}}Y_{k}} \Vert _{P_{X}X \oplus P_{Y_{k}}Y_{k}} \leq \Vert x^{*} + y_{k}^{*} \Vert _{X \oplus Y_{k}} \leq 1, $$
we have
$$ z^{*}|_{\Sigma (P_{X}X \oplus P_{Y_{k}}Y_{k})} = x^{*}|_{P_{X}X} + \sum _{k \in \mathbb{N}} \alpha _{k} y_{k}^{*}|_{P_{Y_{k}}Y_{k}} \in \Lambda (P_{X}X \oplus P_{Y_{k}}Y_{k}), $$
and the implication is proved. Now, we obtain immediately that $ s \leq r $.
\end{proof}

\begin{proposition} \label{propstab}
Let $ k \in \mathbb{N} $. If $ x \in X $ and $ y_{k} \in Y_{k} $, then
$$ \Vert x + y_{k} \Vert _{\Sigma } = \Vert x + y_{k} \Vert _{X \oplus Y_{k}}. $$
Similarly, if $ x^{*} \in X^{*} $ and $ y_{k}^{*} \in Y_{k}^{*} $, then
$$ \Vert x^{*} + y_{k}^{*} \Vert _{\Sigma } = \Vert x^{*} + y_{k}^{*} \Vert _{X \oplus Y_{k}}. $$
Finally, if $ x^{*} \in X^{*} $, then
$$ \Vert x^{*} \Vert _{\Sigma } = \Vert x^{*} \Vert _{X}. $$
\end{proposition}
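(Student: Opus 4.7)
The plan is to prove (1) first and derive (2) and (3) from it. Before starting, I would record a preliminary observation: for any $ x^{*} \in X^{*} $, viewed also as a functional on $ X \oplus Y_{k} $, the monotonicity assumption combined with the coincidence of norms on $ X $ immediately gives $ \Vert x^{*} \Vert _{X \oplus Y_{k}} = \Vert x^{*} \Vert _{X} $. In particular, every $ x^{*} + y_{k}^{*} $ with $ \Vert x^{*} + y_{k}^{*} \Vert _{X \oplus Y_{k}} \leq 1 $ satisfies $ \Vert x^{*} \Vert _{X} \leq 1 $ (and symmetrically $ \Vert y_{k}^{*} \Vert _{Y_{k}} \leq 1 $).

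For the upper bound in (1), I would pair $ x + y_{k} $ with an arbitrary $ z^{*} = x^{*} + \sum _{j} \alpha _{j} y_{j}^{*} \in \Lambda (X \oplus Y_{k}) $; only the $ j = k $ summand contributes, yielding $ z^{*}(x + y_{k}) = (x^{*} + \alpha _{k} y_{k}^{*})(x + y_{k}) $. The small but essential calculation is $ \Vert x^{*} + \alpha _{k} y_{k}^{*} \Vert _{X \oplus Y_{k}} \leq 1 $, which I would obtain by writing this as the convex combination $ \alpha _{k}(x^{*} + y_{k}^{*}) + (1 - \alpha _{k}) x^{*} $ and invoking the preliminary observation. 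For the matching lower bound, I would choose a Hahn--Banach norming functional $ x^{*} + y_{k}^{*} \in (X \oplus Y_{k})^{*} $ for $ x + y_{k} $ and note that it lies in $ \Lambda (X \oplus Y_{k}) $ with $ \alpha _{k} = 1 $ and all other $ \alpha _{j} = 0 $.

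Statement (2) then follows by a clean duality argument. The bound $ \Vert x^{*} + y_{k}^{*} \Vert _{\Sigma } \leq \Vert x^{*} + y_{k}^{*} \Vert _{X \oplus Y_{k}} $ is immediate after rescaling $ x^{*} + y_{k}^{*} $ into $ \Lambda (X \oplus Y_{k}) $ and unwinding the definition of $ \Vert \cdot \Vert _{\Sigma } $. The reverse inequality is a consequence of (1): the isometric inclusion $ X \oplus Y_{k} \subset \Sigma $ means that the defining supremum for $ \Vert x^{*} + y_{k}^{*} \Vert _{X \oplus Y_{k}} $ runs over a subset of the unit ball of $ \Sigma $, so it is bounded by $ \Vert x^{*} + y_{k}^{*} \Vert _{\Sigma } $. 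Statement (3) is then the specialisation $ y_{k}^{*} = 0 $ of (2), combined once more with the preliminary observation $ \Vert x^{*} \Vert _{X \oplus Y_{k}} = \Vert x^{*} \Vert _{X} $. The only non-automatic point anywhere is the convex-combination identity used in the upper bound of (1); no serious obstacle is expected.
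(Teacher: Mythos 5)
Your proof is correct, and it diverges from the paper's argument on the second and third assertions in an interesting way. For the equality $\Vert x + y_k \Vert_{\Sigma} = \Vert x + y_k \Vert_{X \oplus Y_k}$ your argument and the paper's are essentially the same idea phrased slightly differently: you bound $z^*(x+y_k)$ by noting $x^* + \alpha_k y_k^* = \alpha_k(x^*+y_k^*) + (1-\alpha_k)x^*$ is in the unit ball of $(X \oplus Y_k)^*$, while the paper expresses the scalar $x^*(x) + \alpha_k y_k^*(y_k)$ as the convex combination $(1-\alpha_k)(x^*+y_k^*)(x) + \alpha_k(x^*+y_k^*)(x+y_k)$ and takes the supremum over $\alpha_k \in [0,1]$ at the endpoints; in both cases the linearity in $\alpha_k$ is the real content. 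For the dual statement, however, the paper reaches for Lemma~\ref{lemproj}(A) with the projections $P_X = \mathrm{id}_X$, $P_{Y_k} = \mathrm{id}_{Y_k}$ for the fixed index and $0$ otherwise (the hypothesis of that lemma being supplied by the monotonicity assumption), then evaluates $x^*+y_k^*$ against $Pz$ and applies the already-proved primal equality. Your argument instead notes that $\Lambda(X \oplus Y_k) \subset B_{(\Sigma)^*}$ by the very definition of $\Vert \cdot \Vert_\Sigma$ (so the $\leq$ inequality is a one-line rescaling), and gets the $\geq$ inequality from the isometric inclusion of $X \oplus Y_k$ into $\Sigma$ established in part (1). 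This avoids the projection lemma entirely and is arguably more self-contained, at the cost of not exhibiting the $1$-complemented inclusion explicitly --- which the paper implicitly sets up here and then uses heavily later. For the final equality $\Vert x^* \Vert_\Sigma = \Vert x^* \Vert_X$ the paper invokes Lemma~\ref{lemequiv}(B) directly, whereas you derive it as a specialization of part (2) together with your preliminary observation $\Vert x^* \Vert_{X \oplus Y_k} = \Vert x^* \Vert_X$; both work, and your preliminary observation is itself correct and cleanly stated.
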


\begin{proof}
Let $ K \in \mathbb{N} $ be fixed ($ K $ plays the same role here as $ k $ in the proposition). For $ x \in X $ and $ y_{K} \in Y_{K} $, we have
\begin{eqnarray*}
\Vert x + y_{K} \Vert _{\Sigma } & = & \sup \big\{ \vert z^{*}(x + y_{K}) \vert : z^{*} \in \Lambda (X \oplus Y_{k}) \big\} \\
 & = & \sup \Big\{ \big\vert x^{*}(x) + \alpha _{K} y_{K}^{*}(y_{K}) \big\vert : x^{*} \in X^{*}, y_{K}^{*} \in Y_{K}^{*}, \\
 & & \quad \quad \quad \Vert x^{*} + y_{K}^{*} \Vert _{X \oplus Y_{K}} \leq 1, 0 \leq \alpha _{K} \leq 1 \Big\} \\
 & = & \max \Big\{ \sup \Big\{ \big\vert (x^{*} + y_{K}^{*})(x + y_{K}) \big\vert : \Vert x^{*} + y_{K}^{*} \Vert _{X \oplus Y_{K}} \leq 1 \Big\} , \\
 & & \quad \quad \quad \sup \Big\{ \big\vert (x^{*} + y_{K}^{*})(x) \big\vert : \Vert x^{*} + y_{K}^{*} \Vert _{X \oplus Y_{K}} \leq 1 \Big\} \Big\} \\
 & = & \max \big\{ \Vert x + y_{K} \Vert _{X \oplus Y_{K}}, \Vert x \Vert _{X \oplus Y_{K}} \big\} \\
 & = & \Vert x + y_{K} \Vert _{X \oplus Y_{K}}.
\end{eqnarray*}

Let $ P_{X} = id_{X}, P_{Y_{K}} = id_{Y_{K}} $ and $ P_{Y_{k}} = 0 $ for $ k \neq K $. The assumption of Lemma \ref{lemproj} is satisfied due to the monotonicity of the norms $ \Vert \cdot \Vert _{X \oplus Y_{k}} $. So, the projection
$$ P : x + \sum _{k \in \mathbb{N}} y_{k} \mapsto x + y_{K} $$
fulfills $ \Vert P \Vert _{\Sigma } \leq 1 $. Now, let $ x^{*} \in X^{*} $ and $ y_{K}^{*} \in Y_{K}^{*} $. For $ z \in \Sigma (X \oplus Y_{k}) $, we have, using the first part of the proposition,
\begin{eqnarray*}
\vert (x^{*} + y_{K}^{*})(z) \vert & = & \vert (x^{*} + y_{K}^{*})(Pz) \vert \\
 & \leq & \Vert x^{*} + y_{K}^{*} \Vert _{X \oplus Y_{K}} \Vert Pz \Vert _{X \oplus Y_{K}} \\
 & = & \Vert x^{*} + y_{K}^{*} \Vert _{X \oplus Y_{K}} \Vert Pz \Vert _{\Sigma } \\
 & \leq & \Vert x^{*} + y_{K}^{*} \Vert _{X \oplus Y_{K}} \Vert z \Vert _{\Sigma },
\end{eqnarray*}
and so $ \Vert x^{*} + y_{K}^{*} \Vert _{\Sigma } \leq \Vert x^{*} + y_{K}^{*} \Vert _{X \oplus Y_{K}} $. The opposite inequality is clear.

Finally, the inequality $ \Vert x^{*} \Vert _{\Sigma } = \Vert x^{*} \Vert _{X} $ follows from Lemma \ref{lemequiv}(B).
\end{proof}

\begin{lemma} \label{lemcomp}
$ \Lambda (X \oplus Y_{k}) $ is compact in the $ w^{*} $-topology.
\end{lemma}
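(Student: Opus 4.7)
I would realize $\Lambda(X \oplus Y_k)$ as a continuous image of a compact parameter space. Set
$$ K = \Big\{ (x^{*}, (y_k^{*})_k, (\alpha_k)_k) : x^{*} \in B_{X^{*}},\ y_k^{*} \in B_{Y_k^{*}},\ \Vert x^{*} + y_k^{*} \Vert_{X \oplus Y_k} \leq 1,\ 0 \leq \alpha_k \leq 1,\ \textstyle\sum_k \alpha_k^{2} \leq 1 \Big\}, $$
equipped with the product of the $w^{*}$-topologies on the dual balls and the usual topology on $[0,1]^{\mathbb{N}}$. Define $\Phi: K \to \Sigma(X \oplus Y_k)^{*}$ by $\Phi(x^{*},(y_k^{*}),(\alpha_k)) = x^{*} + \sum_k \alpha_k y_k^{*}$; the bound in Lemma \ref{lemequiv}(B) guarantees this series gives a bounded functional, and by construction $\Phi(K) = \Lambda(X \oplus Y_k)$. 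It then suffices to prove $K$ is compact and $\Phi$ is $w^{*}$-continuous.

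For compactness of $K$: By Banach--Alaoglu and Tychonoff, the ambient product $B_{X^{*}} \times \prod_k B_{Y_k^{*}} \times [0,1]^{\mathbb{N}}$ is compact. The inclusion $X^{*} \hookrightarrow (X \oplus Y_k)^{*}$ sending $x^{*}$ to $x+y_k \mapsto x^{*}(x)$ (and likewise $Y_k^{*} \hookrightarrow (X \oplus Y_k)^{*}$) is $w^{*}$-continuous, so the map $(x^{*},y_k^{*}) \mapsto x^{*}+y_k^{*} \in (X \oplus Y_k)^{*}$ is $w^{*}$-continuous, and since $B_{(X \oplus Y_k)^{*}}$ is $w^{*}$-closed, the condition $\Vert x^{*}+y_k^{*}\Vert_{X\oplus Y_k} \leq 1$ cuts out a closed set for every $k$. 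The function $(\alpha_k) \mapsto \sum_k \alpha_k^{2}$ is lower semicontinuous as the supremum of its continuous finite partial sums, so $\{\sum \alpha_k^{2} \leq 1\}$ is closed as well. Hence $K$ is a closed subset of a compact space.

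For continuity of $\Phi$: Fix $z = x + \sum_k y_k \in \Sigma(X \oplus Y_k)$; I must show $F(p) := \Phi(p)(z) = x^{*}(x) + \sum_k \alpha_k y_k^{*}(y_k)$ is continuous on $K$. For each $N$ the truncation $F_N(p) = x^{*}(x) + \sum_{k \leq N} \alpha_k y_k^{*}(y_k)$ is a polynomial in the continuous coordinate functions $x^{*} \mapsto x^{*}(x)$, $y_k^{*} \mapsto y_k^{*}(y_k)$, and $\alpha_k$, hence continuous. By Cauchy--Schwarz together with the defining constraints of $K$,
$$ |F(p) - F_N(p)| = \Big| \sum_{k > N} \alpha_k y_k^{*}(y_k) \Big| \leq \Big( \sum_{k > N} \alpha_k^{2} \Big)^{1/2} \Big( \sum_{k > N} \Vert y_k \Vert_{Y_k}^{2} \Big)^{1/2} \leq \Big( \sum_{k > N} \Vert y_k \Vert_{Y_k}^{2} \Big)^{1/2}, $$
which tends to $0$ as $N \to \infty$ uniformly over $p \in K$. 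Thus $F$ is a uniform limit of continuous functions, hence continuous, and $\Phi$ is $w^{*}$-continuous. This gives $\Lambda(X \oplus Y_k) = \Phi(K)$ compact, as required.

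The main obstacle is exactly this last continuity verification: the $w^{*}$-limit must pass through the infinite series defining $\Phi$, and the crucial input is the uniform tail bound above, which in turn relies on the $\ell^{2}$-constraint $\sum \alpha_k^{2} \leq 1$ built into the definition of $\Lambda$.
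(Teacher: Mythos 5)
Your proof is correct and follows essentially the same approach as the paper: realize $\Lambda(X \oplus Y_k)$ as the image of a compact parameter space under a $w^*$-continuous map, with continuity established by a uniform tail bound on the truncated sums. The only cosmetic differences are that you equip the $\alpha$-coordinates with the product topology on $[0,1]^{\mathbb N}$ rather than the weak topology on $B_{\ell^2}^+$ (these coincide on that set), you merge the dual-functional and scalar parameters into a single compact space, and you state the tail estimate via a direct Cauchy--Schwarz bound rather than recognizing it as $\Vert z - z_l \Vert_\Sigma$; none of these changes the substance of the argument.
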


\begin{proof}
We prove that $ \Lambda (X \oplus Y_{k}) $ is a continuous image of a compact space. We define
$$ K = \Big\{ (x^{*}, y_{1}^{*}, y_{2}^{*}, \dots ) \in X^{*} \times Y_{1}^{*} \times Y_{2}^{*} \times \dots : \; \Vert x^{*} + y_{k}^{*} \Vert _{X \oplus Y_{k}} \leq 1 \Big\} , $$
$$ B_{\ell ^{2}}^{+} = \Big\{ (\alpha _{k})_{k \in \mathbb{N}} \in \ell ^{2} : \; 0 \leq \alpha _{k} \leq 1, \; \sum _{k \in \mathbb{N}} \alpha _{k}^{2} \leq 1 \Big\} . $$
If we consider the $ w^{*} $-topology on the duals $ X^{*}, Y_{1}^{*}, Y_{2}^{*}, \dots $, then $ K $ is compact (note that $ K \subset B_{X^{*}} \times B_{Y_{1}^{*}} \times B_{Y_{2}^{*}} \times \dots $, as $ \Vert x^{*} + y_{k}^{*} \Vert _{X \oplus Y_{k}} \leq 1 $ implies $ \Vert x^{*} \Vert _{X} \leq 1 $ and $ \Vert y_{k}^{*} \Vert _{Y_{k}} \leq 1 $). If we consider the weak topology on $ \ell ^{2} $, then $ B_{\ell ^{2}}^{+} $ is compact. It remains to check that
$$ \lambda : \big( (x^{*}, y_{1}^{*}, y_{2}^{*}, \dots ), (\alpha _{k})_{k \in \mathbb{N}} \big) \mapsto x^{*} + \sum _{k \in \mathbb{N}} \alpha _{k} y_{k}^{*} $$
is continuous on $ K \times B_{\ell ^{2}}^{+} $ where we consider the $ w^{*} $-topology on the dual of $ \Sigma (X \oplus Y_{k}) $. In other words, it remains to check that $ (\lambda (\, \cdot \, ))(z) $ is continuous on $ K \times B_{\ell ^{2}}^{+} $ for every $ z = x + \sum _{k \in \mathbb{N}} y_{k} \in \Sigma (X \oplus Y_{k}) $. Let such a $ z $ be fixed and let $ z_{l} $ denote $ x + \sum _{k = 1}^{l} y_{k} $. Since the functions
$$ (\lambda (\, \cdot \, ))(z_{l}) : \big( (x^{*}, y_{1}^{*}, y_{2}^{*}, \dots ), (\alpha _{k})_{k \in \mathbb{N}} \big) \mapsto x^{*}(x) + \sum _{k = 1}^{l} \alpha _{k} y_{k}^{*}(y_{k}) $$
are clearly continuous, it is sufficient to show that $ (\lambda (\, \cdot \, ))(z_{l}) $ converges uniformly to $ (\lambda (\, \cdot \, ))(z) $ as $ l \rightarrow \infty $. We write
$$ \sup _{a \in K \times B_{\ell ^{2}}^{+}} \big\vert (\lambda (a))(z) - (\lambda (a))(z_{l}) \big\vert = \sup _{z^{*} \in \Lambda (X \oplus Y_{k})} \vert z^{*}(z - z_{l}) \vert = \Vert z - z_{l} \Vert _{\Sigma }, $$
which tends to $ 0 $ as $ l \rightarrow \infty $.
\end{proof}

\begin{lemma} \label{lemmod}
Let $ X_{1} $ and $ X_{2} $ be subspaces of $ X $ such that $ X = X_{1} \oplus X_{2} $ and let $ c > 0 $. If, for every $ k \in \mathbb{N} $,
$$ \Vert x_{1} + x_{2} + y_{k} \Vert _{X \oplus Y_{k}} \geq \Vert x_{1} \Vert _{X} + c\Vert x_{2} + y_{k} \Vert _{X \oplus Y_{k}}, \quad x_{1} \in X_{1}, x_{2} \in X_{2}, y_{k} \in Y_{k}, $$
then
$$ \Big\Vert x_{1} + x_{2} + \sum _{k \in \mathbb{N}} y_{k} \Big\Vert _{\Sigma } \geq \Vert x_{1} \Vert _{X} + c\Big\Vert x_{2} + \sum _{k \in \mathbb{N}} y_{k} \Big\Vert _{\Sigma } $$
for $ x_{1} + x_{2} + \sum _{k \in \mathbb{N}} y_{k} \in \Sigma (X \oplus Y_{k}) $ where $ x_{1} \in X_{1}, x_{2} \in X_{2}, y_{k} \in Y_{k} $.
\end{lemma}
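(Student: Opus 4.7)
The plan is to bound $\|z\|_\Sigma$ from below, where $z = x_1 + x_2 + \sum_k y_k$, by constructing, for each $\epsilon > 0$, a test functional $z'^* \in \Lambda(X \oplus Y_k)$ with $z'^*(z) \geq \|x_1\|_X + c\|x_2 + \sum_k y_k\|_\Sigma - c\epsilon$, and then letting $\epsilon \to 0$. The only non-trivial step is the construction; everything else is just dualizing the weighted monotonicity assumption.

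The first step is to dualize the hypothesis: if $\psi_1 \in X_1^*$ has $\|\psi_1\|_{X_1} \leq 1$ and $\psi \in (X_2 \oplus Y_k)^*$ has norm at most $c$ in the norm induced from $\|\cdot\|_{X \oplus Y_k}$, then, viewing $\psi_1$ as zero on $X_2 \oplus Y_k$, one has $\|\psi_1 + \psi\|_{X \oplus Y_k} \leq 1$. Indeed, for $v = x_1 + x_2 + y_k$,
\[ |(\psi_1 + \psi)(v)| \leq \|\psi_1\|_{X_1}\|x_1\|_X + \|\psi\|_{X_2 \oplus Y_k}\|x_2 + y_k\|_{X \oplus Y_k} \leq \|x_1\|_X + c\|x_2 + y_k\|_{X \oplus Y_k} \leq \|v\|_{X \oplus Y_k}. \]
Next, set $u = x_2 + \sum_k y_k$ and pick $z^* = x^* + \sum_k \alpha_k y_k^* \in \Lambda(X \oplus Y_k)$ with $z^*(u) \geq \|u\|_\Sigma - \epsilon$; decompose $x^* = x_1^* + x_2^*$ along $X^* = X_1^* \oplus X_2^*$. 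Since $x_1^*$ kills $X_2 \oplus Y_k$, one gets $z^*(u) = x_2^*(x_2) + \sum_k \alpha_k y_k^*(y_k)$, and restricting $x^* + y_k^*$ to the subspace $X_2 \oplus Y_k$ yields $\|x_2^* + y_k^*\|_{X_2 \oplus Y_k} \leq 1$ for every $k$.

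Now pick $\tilde{x}_1^* \in X_1^*$ (Hahn--Banach) with $\|\tilde{x}_1^*\|_{X_1} = 1$ and $\tilde{x}_1^*(x_1) = \|x_1\|_X$, and extend by zero on $X_2$. Define
\[ z'^* = (\tilde{x}_1^* + c x_2^*) + \sum_k \alpha_k\,(c y_k^*), \]
reading this as an element of $\Lambda(X \oplus Y_k)$ with principal part $\tilde{x}_1^* + cx_2^*$, coefficients $\alpha_k$ (unchanged) and $Y_k^*$-components $cy_k^*$. By the dualized hypothesis applied to $\psi_1 = \tilde{x}_1^*$ and $\psi = cx_2^* + cy_k^*$ (whose $X_2 \oplus Y_k$-norm is at most $c$), we get $\|\tilde{x}_1^* + cx_2^* + cy_k^*\|_{X \oplus Y_k} \leq 1$ for every $k$, while the $\alpha_k$ inherit the $[0,1]$ and $\ell^2$ constraints from $z^*$, so $z'^* \in \Lambda(X \oplus Y_k)$. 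Evaluating,
\[ z'^*(z) = \tilde{x}_1^*(x_1) + c\Bigl(x_2^*(x_2) + \sum_k \alpha_k y_k^*(y_k)\Bigr) = \|x_1\|_X + c\,z^*(u) \geq \|x_1\|_X + c\|u\|_\Sigma - c\epsilon, \]
so $\|z\|_\Sigma \geq z'^*(z)$ and the conclusion follows on $\epsilon \downarrow 0$. The delicate point is the membership $z'^* \in \Lambda(X \oplus Y_k)$: the factor $c$ has to be placed symmetrically on both $x_2^*$ and $y_k^*$ so that the pair $(c x_2^*, c y_k^*)$ scales uniformly as a single functional on $X_2 \oplus Y_k$ of norm at most $c$, which is precisely the form the dualized hypothesis can absorb.
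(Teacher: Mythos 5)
Your proof is correct and takes essentially the same approach as the paper: you build the same test functional $\tilde{x}_1^* + c\bigl(x_2^* + \sum_k \alpha_k y_k^*\bigr)$ and verify its membership in $\Lambda(X\oplus Y_k)$ by the same pointwise estimate, the only difference being that you pick a near-optimal $z^* \in \Lambda(X\oplus Y_k)$ and let $\varepsilon \downarrow 0$, whereas the paper invokes the $w^*$-compactness of $\Lambda(X\oplus Y_k)$ (Lemma \ref{lemcomp}) to get an exact maximizer.
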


\begin{proof}
Let $ x_{1} + x_{2} + \sum _{k \in \mathbb{N}} y_{k} \in \Sigma (X \oplus Y_{k}) $ where $ x_{1} \in X_{1}, x_{2} \in X_{2}, y_{k} \in Y_{k} $. By Lemma \ref{lemcomp}, the supremum in the definition of $ \Vert \cdot \Vert _{\Sigma } $ is attained. So, we have
$$ \Big\Vert x_{2} + \sum _{k \in \mathbb{N}} y_{k} \Big\Vert _{\Sigma } = x^{*}(x_{2}) + \sum _{k \in \mathbb{N}} \alpha _{k} y_{k}^{*}(y_{k}) $$
for some $ x^{*} + \sum _{k \in \mathbb{N}} \alpha _{k} y_{k}^{*} \in \Lambda (X \oplus Y_{k}) $. Let $ x_{2}^{*} $ be the $ X_{2}^{*} $-component of $ x^{*} $ (i.e., $ x_{2}^{*}(x_{1}' + x_{2}') = x^{*}(x_{2}') $ for $ x_{1}' \in X_{1}, x_{2}' \in X_{2} $). Let $ x_{1}^{*} \in X_{1}^{*} $ be such that $ \Vert x_{1}^{*} \Vert _{X} = 1 $ and $ x_{1}^{*}(x_{1}) = \Vert x_{1} \Vert _{X} $. We claim that
$$ x_{1}^{*} + c\Big( x_{2}^{*} + \sum _{k \in \mathbb{N}} \alpha _{k} y_{k}^{*} \Big) \in \Lambda (X \oplus Y_{k}). $$
Indeed, for $ k \in \mathbb{N} $, we have $ \Vert x_{1}^{*} + c( x_{2}^{*} + y_{k}^{*}) \Vert _{X \oplus Y_{k}} \leq 1 $ because, for $ x_{1}' \in X_{1}, x_{2}' \in X_{2}, y_{k}' \in Y_{k} $,
\begin{eqnarray*}
\big\vert \big( x_{1}^{*} + c(x_{2}^{*} + y_{k}^{*})\big) (x_{1}' + x_{2}' + y_{k}') \big\vert & \leq & \vert x_{1}^{*}(x_{1}') \vert + c\vert (x^{*} + y_{k}^{*})(x_{2}' + y_{k}') \vert \\
 & \leq & \Vert x_{1}' \Vert _{X} + c\Vert x_{2}' + y_{k}' \Vert _{X \oplus Y_{k}} \\
 & \leq & \Vert x_{1}' + x_{2}' + y_{k}' \Vert _{X \oplus Y_{k}}.
\end{eqnarray*}

Now, we obtain
\begin{eqnarray*}
\Big\Vert x_{1} + x_{2} + \sum _{k \in \mathbb{N}} y_{k} \Big\Vert _{\Sigma } & \geq & \Big( x_{1}^{*} + c\Big( x_{2}^{*} + \sum _{k \in \mathbb{N}} \alpha _{k} y_{k}^{*} \Big) \Big) \Big( x_{1} + x_{2} + \sum _{k \in \mathbb{N}} y_{k} \Big) \\
 & = & x_{1}^{*}(x_{1}) + c\Big( x^{*}(x_{2}) + \sum _{k \in \mathbb{N}} \alpha _{k} y_{k}^{*}(y_{k}) \Big) \\
 & = & \Vert x_{1} \Vert _{X} + c \Big\Vert x_{2} + \sum _{k \in \mathbb{N}} y_{k} \Big\Vert _{\Sigma }.
\end{eqnarray*}
\end{proof}

\begin{proposition} \label{proptree}
Let $ (F, \Vert \cdot \Vert _{F}) $ be a Banach space with a monotone basis $ \{ f_{1}, f_{2}, \dots \} $. Then there is a Banach space $ (E, \Vert \cdot \Vert ) $ with a basis $ \{ e_{\eta } : \eta \in \mathbb{N}^{< \mathbb{N}} \} $ such that

{\rm (a)} if $ (n_{1}, \dots , n_{l}) \in \mathbb{N}^{< \mathbb{N}} $ and $ r_{0}, r_{1}, \dots , r_{l} $ are scalars, then
$$ \Big\Vert \sum _{i=0}^{l} r_{i} e_{n_{1}, \dots , n_{i}} \Big\Vert = \Big\Vert \sum _{i=0}^{l} r_{i} f_{i+1} \Big\Vert _{F}, $$

{\rm (b)} for every $ (n_{1}, \dots , n_{l}) \in \mathbb{N}^{< \mathbb{N}} $, we have
$$ E_{n_{1}, \dots , n_{l}} = \Sigma (E_{n_{1}, \dots , n_{l}, k}) $$
where
$$ E_{\nu } = \overline{\mathrm{span}} \{ e_{\eta } : \eta \subset \nu \textrm{ or } \nu \subset \eta \} , $$

{\rm (c)} the basis $ \{ e_{\eta } : \eta \in \mathbb{N}^{< \mathbb{N}} \} $ is monotone in the sense that, for every tree $ T $, the projection
$$ P_{T} : \sum _{\eta \in \mathbb{N}^{< \mathbb{N}}} r_{\eta } e_{\eta } \mapsto \sum _{\eta \in T} r_{\eta } e_{\eta } $$
fulfills $ \Vert P_{T} \Vert \leq 1 $,

{\rm (d)} if $ l \in \mathbb{N} \cup \{ 0 \} $ and $ c_{l} > 0 $ is a constant such that
$$ \Vert f \Vert _{F} \geq \Vert P_{l+1}f \Vert _{F} + c_{l}\Vert f - P_{l+1}f \Vert _{F}, \quad f \in F, $$
where $ (P_{n})_{n=1}^{\infty } $ denotes the sequence of partial sum operators associated with the basis $ \{ f_{1}, f_{2}, \dots \} $, then, for every $ (n_{1}, \dots , n_{l}) \in \mathbb{N}^{l} $,
$$ \Big\Vert \sum _{\eta \in \mathbb{N}^{< \mathbb{N}}} r_{\eta } e_{\eta } \Big\Vert \geq \Big\Vert \sum _{\eta \subset (n_{1}, \dots , n_{l})} r_{\eta } e_{\eta } \Big\Vert + c_{l} \Big\Vert \sum _{\eta \supsetneqq (n_{1}, \dots , n_{l})} r_{\eta } e_{\eta } \Big\Vert , \quad \sum _{\eta \in \mathbb{N}^{< \mathbb{N}}} r_{\eta } e_{\eta } \in E_{n_{1}, \dots , n_{l}}. $$
\end{proposition}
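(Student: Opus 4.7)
The plan is to build $E$ as the completion of an increasing union of finite-depth approximations $E^{[N]}$ ($N \ge 0$), where $E^{[N]}$ has basis $\{e_\eta : |\eta| \le N\}$ and distinguished subspaces $E^{[N]}_\nu$ for every $|\nu| \le N$. For each fixed $N$, I construct $E^{[N]}$ by \emph{downward} recursion on $|\nu|$: at the leaves $|\nu|=N$, declare $E^{[N]}_\nu := X_\nu = \mathrm{span}\{e_\eta : \eta \subseteq \nu\}$ with the norm pulled back from $\mathrm{span}\{f_1,\dots,f_{N+1}\} \subset F$ via $e_{(n_1,\dots,n_i)} \leftrightarrow f_{i+1}$; at an interior node $|\nu|<N$, apply Definition~\ref{defsum} to set $E^{[N]}_\nu := \Sigma(X_\nu \oplus Y^{[N]}_{\nu,k})$, where $Y^{[N]}_{\nu,k}$ is the algebraic complement of $X_\nu$ in the already-built $E^{[N]}_{(\nu,k)}$ and the norm on $X_\nu \oplus Y^{[N]}_{\nu,k}$ is its inherited norm as the space $E^{[N]}_{(\nu,k)}$ itself. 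The required monotonicity of that norm follows by an easy induction, combining Lemma~\ref{lemequiv}(A) (to bound the $\Sigma$-norm below by the $X_{(\nu,k)}$-component) with the monotonicity of the basis $\{f_i\}$ of $F$.

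Next I show, again by downward induction on $|\nu|$, that the natural map $E^{[N]}_\nu \hookrightarrow E^{[N+1]}_\nu$ is an isometry: this is precisely Proposition~\ref{propstab}, since the norm of an element of $X_\nu \oplus Y^{[N]}_{\nu,k}$ computed inside the bigger $\Sigma$-sum $E^{[N+1]}_{(\nu,k)}$ equals its norm inside $E^{[N]}_{(\nu,k)}$ by the inductive hypothesis one step deeper. I then let $E$ be the completion of $\bigcup_N E^{[N]}$ and put $E_\nu := \overline{\bigcup_N E^{[N]}_\nu}$.

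It remains to verify (a)--(d). Property (a) holds inside every $E^{[N]}$ with $N \ge l$. For (b), the structural identity $E^{[N]}_\nu = \Sigma(X_\nu \oplus Y^{[N]}_{\nu,k})$ plus the uniform equivalence of Lemma~\ref{lemequiv}(A) shows, by truncation of both the index $k$ and the depth, that the limit space is $\Sigma(X_\nu \oplus Y_{\nu,k})$ for $Y_{\nu,k} := \overline{\bigcup_N Y^{[N]}_{\nu,k}}$. For (c), I prove by downward induction on $|\nu|$ that $\|P_T|_{E^{[N]}_\nu}\| \le 1$ for every tree $T$: when $\nu \in T$ this is Lemma~\ref{lemproj}(A) with $P_X = \mathrm{id}_{X_\nu}$ and $P_{Y_k} = P_T|_{Y^{[N]}_{\nu,k}}$; when $\nu \notin T$, Lemma~\ref{lemequiv}(A) provides a norm-one projection onto $X_\nu$, after which monotonicity of $\{f_i\}$ inside $X_\nu$ handles the remaining partial-sum restriction. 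For (d), I prove by downward induction on $|\mu|$ the stronger statement that, for every $\mu \supseteq \nu = (n_1,\dots,n_l)$ and every $z \in E^{[N]}_\mu$, $\|z\| \ge \|P_\nu z\| + c_l \|z - P_\nu z\|$: the base case $|\mu|=N$ reduces to the hypothesis on $F$ through the spine isometry, and the inductive step is Lemma~\ref{lemmod} applied to $E^{[N]}_\mu = \Sigma(X_\mu \oplus Y^{[N]}_{\mu,k})$ with $X_1 = X_\nu$ and $X_2 = \mathrm{span}\{e_\eta : \nu \subsetneq \eta \subseteq \mu\}$, whose hypothesis at each child $(\mu,k)$ is precisely the inductive assertion. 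Letting $N \to \infty$ extends every estimate to $E$.

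The main obstacle is coordinating the two nested inductions (over $N$ and over the depth of $\nu$) so that the approximating spaces embed isometrically and the recursive identities survive completion. Proposition~\ref{propstab} is the essential tool for this patching, Lemma~\ref{lemmod} is what propagates the monotonicity estimate of $F$ through the iterated $\Sigma$-sums to yield (d), and Lemma~\ref{lemproj}(A) drives the tree-projection argument for (c).
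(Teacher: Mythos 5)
Your plan tracks the paper's own proof closely: recursive leaf-to-root construction of the depth-$N$ approximations, consistency of the nested norms, completion, and Lemmas~\ref{lemproj}, \ref{lemmod} and Proposition~\ref{propstab} to verify (a)--(d).

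There is a gap in the consistency step. You write that the isometry $E^{[N]}_\nu \hookrightarrow E^{[N+1]}_\nu$ ``is precisely Proposition~\ref{propstab},'' but that proposition only says that an element of the single block $X \oplus Y_{k}$ has the same norm in $X \oplus Y_{k}$ and in $\Sigma(X \oplus Y_{k})$; it does not compare $\Sigma(X_\nu \oplus Y^{[N]}_{\nu,k})$ with the larger $\Sigma(X_\nu \oplus Y^{[N+1]}_{\nu,k})$ for elements whose support runs over several $k$, so the inductive step of your downward induction is not justified as stated (Proposition~\ref{propstab} handles only the base case $|\nu|=N$). The paper closes this by first proving the finite-level tree monotonicity (your property (c)) for each fixed depth via Lemma~\ref{lemproj}(A), and then applying Lemma~\ref{lemproj}(B), whose hypothesis is exactly the norm-one bound on the coordinate projection from $E^{[N+1]}_{(\nu,k)}$ onto $E^{[N]}_{(\nu,k)}$; this forces a reordering of your argument, with finite-level (c) established \emph{before} the consistency step rather than as a verification afterwards. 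Alternatively you can avoid the projection bound: given $z^{*} = x^{*} + \sum_{k} \alpha_{k} y_{k}^{*} \in \Lambda(X_\nu \oplus Y^{[N]}_{\nu,k})$, extend each $x^{*} + y_{k}^{*}$ by Hahn--Banach from $E^{[N]}_{(\nu,k)}$ to $E^{[N+1]}_{(\nu,k)}$ without increasing norm; since the extension agrees with the original on $X_\nu$, its $X_\nu$-component is forced to be $x^{*}$, so the extended functional still belongs to $\Lambda(X_\nu \oplus Y^{[N+1]}_{\nu,k})$ and has the same value as $z^{*}$ on your element of $\Sigma(X_\nu \oplus Y^{[N]}_{\nu,k})$, which gives $r \le s$ in the notation of Lemma~\ref{lemproj}(B). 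Either repair works, but something must replace the bare appeal to Proposition~\ref{propstab} at the inductive step.
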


\begin{proof}
Let $ L \in \mathbb{N} $. In $ L + 1 $ steps, we construct a norm $ \Vert \cdot \Vert $ on the space $ E^{L} = \ell ^{2}(\mathbb{N}^{\leq L}) $. Let $ e_{\eta } $ denote the element of $ \ell ^{2}(\mathbb{N}^{\leq L}) $ which has $ 1 $ on the position $ \eta $ and $ 0 $ elsewhere. Let us denote
$$ E_{\nu }^{L} = \overline{\mathrm{span}} \{ e_{\eta } : \eta \in \mathbb{N}^{\leq L}, \eta \subset \nu \textrm{ or } \nu \subset \eta \} . $$
In the first step, for every $ (n_{1}, \dots , n_{L}) \in \mathbb{N}^{L} $, we define the norm on $ E_{n_{1}, \dots , n_{L}}^{L} $ by
\begin{equation} \label{stepa}
\Big\Vert \sum _{i=0}^{L} r_{i} e_{n_{1}, \dots , n_{i}} \Big\Vert = \Big\Vert \sum _{i=0}^{L} r_{i} f_{i+1} \Big\Vert _{F}, \quad \quad \sum _{i=0}^{L} r_{i} e_{n_{1}, \dots , n_{i}} \in E_{n_{1}, \dots , n_{L}}^{L}.
\end{equation}
Recursively, for $ l = L - 1, L - 2, \dots , 1, 0 $, we define the norm on the spaces $ E_{n_{1}, \dots , n_{l}}^{L}, (n_{1}, \dots , n_{l}) \in \mathbb{N}^{l}, $ by
\begin{equation} \label{stepb}
E_{n_{1}, \dots , n_{l}}^{L} = \Sigma (E_{n_{1}, \dots , n_{l}, k}^{L}), \quad \quad (n_{1}, \dots , n_{l}) \in \mathbb{N}^{l}.
\end{equation}
Notice that, by Proposition \ref{propstab}, formula (\ref{stepb}) does not change the norm on the spaces $ E_{n_{1}, \dots , n_{l}, k}^{L} $. So, (\ref{stepb}) preserves the norm where it has been already defined. In the last step $ l = 0 $, the norm is defined on $ E_{\emptyset }^{L} = E^{L} $.

Further, if $ T $ is a tree, then, using Lemma \ref{lemproj}(A), one can show by induction $ l + 1 \rightarrow l $ that, for $ 0 \leq l \leq L $,
\begin{equation} \label{stepc}
\Big\Vert \sum _{\eta \in T \cap \mathbb{N}^{\leq L}} r_{\eta } e_{\eta } \Big\Vert \leq \Big\Vert \sum _{\eta \in \mathbb{N}^{\leq L}} r_{\eta } e_{\eta } \Big\Vert , \quad \quad \sum _{\eta \in \mathbb{N}^{\leq L}} r_{\eta } e_{\eta } \in \bigcup _{(n_{1}, \dots , n_{l}) \in \mathbb{N}^{l}} E_{n_{1}, \dots , n_{l}}^{L}.
\end{equation}
At the same time, if $ 0 \leq l_{0} \leq L $ and $ c_{l_{0}} $ are as in (d), then, using Lemma \ref{lemmod}, one can show by induction $ l + 1 \rightarrow l $ that, for $ l_{0} \leq l \leq L $,
\begin{equation} \label{stepd}
\Big\Vert \sum _{\eta \in \mathbb{N}^{\leq L}} r_{\eta } e_{\eta } \Big\Vert \geq \Big\Vert \sum _{\eta \subset (n_{1}, \dots , n_{l_{0}})} r_{\eta } e_{\eta } \Big\Vert + c_{l_{0}} \Big\Vert \sum _{\eta \supsetneqq (n_{1}, \dots , n_{l_{0}})} r_{\eta } e_{\eta } \Big\Vert ,
\end{equation}
$$ \sum _{\eta \in \mathbb{N}^{\leq L}} r_{\eta } e_{\eta } \in \bigcup _{(n_{1}, \dots , n_{l}) \in \mathbb{N}^{l}} E_{n_{1}, \dots , n_{l}}^{L}. $$

Now, consider the above constructed space $ (E^{L}, \Vert \cdot \Vert ) $ for every $ L \in \mathbb{N} $. We identify the space $ E^{L} = \ell ^{2}(\mathbb{N}^{\leq L}) $ with its natural embedding to $ E^{K} = \ell ^{2}(\mathbb{N}^{\leq K}) $ where $ K \geq L $. By (\ref{stepc}), the norm constructed on $ E^{K} $ fulfills
$$ \Big\Vert \sum _{\eta \in \mathbb{N}^{\leq L}} r_{\eta } e_{\eta } \Big\Vert \leq \Big\Vert \sum _{\eta \in \mathbb{N}^{\leq K}} r_{\eta } e_{\eta } \Big\Vert , \quad \quad \sum _{\eta \in \mathbb{N}^{\leq K}} r_{\eta } e_{\eta } \in E^{K}. $$
Lemma \ref{lemproj}(B) guarantees that the norm constructed on $ E^{L} $ is the same as the norm constructed on $ E^{K} $ restricted on $ E^{L} $. So, we can define $ (E, \Vert \cdot \Vert ) $ as the completion of
$$ \Big( \bigcup _{L \in \mathbb{N}} E^{L}, \Vert \cdot \Vert \Big) . $$
By (\ref{stepc}), the norm fulfills in particular
\begin{equation} \label{treeproj}
\Big\Vert \sum _{\eta \in \mathbb{N}^{\leq L}} r_{\eta } e_{\eta } \Big\Vert \leq \Big\Vert \sum _{\eta \in \mathbb{N}^{< \mathbb{N}}} r_{\eta } e_{\eta } \Big\Vert , \quad \quad \sum _{\eta \in \mathbb{N}^{< \mathbb{N}}} r_{\eta } e_{\eta } \in E, \; L \in \mathbb{N}.
\end{equation}
Properties (a)--(d) easily follow from (\ref{stepa})--(\ref{stepd}) (concerning (b), we just realize that, by (\ref{stepb}), (\ref{treeproj}) and Lemma \ref{lemproj}(B), we have $ \Vert e \Vert = \Vert e \Vert _{\Sigma (E_{n_{1}, \dots , n_{l}, k})} $ for $ e \in E_{n_{1}, \dots , n_{l}} \cap E^{L} $).
\end{proof}

\begin{remark}
(i) The space $ (E, \Vert \cdot \Vert ) $ in Proposition \ref{proptree} is uniquely determined by conditions (a) and (b).

(ii) The subspace of $ E $ supported by a well-founded tree $ T $ is reflexive. To prove this, we can use a similar argument as in the proof of Theorem \ref{thmtree} and the observation that, for $ (n_{1}, \dots , n_{l}) \in \mathbb{N}^{< \mathbb{N}} $,
$$ \forall k : \textrm{$ P_{T}E_{n_{1}, \dots , n_{l}, k} $ is reflexive} \quad \Rightarrow \quad \textrm{$ P_{T}E_{n_{1}, \dots , n_{l}} $ is reflexive} $$
by (\ref{thmtree4}) and Lemma \ref{lemequiv}.

Since there is an isometrically universal space $ (F, \Vert \cdot \Vert _{F}) $ with a monotone basis (see, e.g., \cite[p. 34]{diestel}), we can use the arguments in the proofs of Corollary \ref{cortree} and Theorem \ref{thmmain} to prove that reflexive spaces are generic. In other words, if the reader wants to know only the proof of the isometric version of Bourgain's result \cite{bourgain}, then he does not have to deal with the machinery of the following two sections.

(iii) The initial data do not have to be the same for every branch. Instead of one collective space $ (F, \Vert \cdot \Vert _{F}) $ with a monotone basis $ \{ f_{1}, f_{2}, \dots \} $, we can consider a space $ (F^{\sigma }, \Vert \cdot \Vert _{F^{\sigma }}) $ with a monotone basis $ \{ f_{1}^{\sigma }, f_{2}^{\sigma }, \dots \} $ for every individual sequence $ \sigma = (n_{1}, n_{2}, \dots ) \in \mathbb{N}^{\mathbb{N}} $. It is just necessary that the right side of the equality
$$ \Big\Vert \sum _{i=0}^{l} r_{i} e_{n_{1}, \dots , n_{i}} \Big\Vert = \Big\Vert \sum _{i=0}^{l} r_{i} f_{i+1}^{\sigma } \Big\Vert _{F^{\sigma }} $$
is independent of $ \sigma \supset (n_{1}, \dots , n_{l}) $.

Tree spaces with various subspaces supported by branches were constructed and studied by Argyros and Dodos \cite{argyrosdodos} (see also \cite{dodosferenczi, dodos} or the survey \cite{dodostopics}). Their conception of a tree space provides non-trivial isomorphically universal spaces for several analytic families of Banach spaces.

(iv) One can consider monotone decompositions instead of monotone bases.
\end{remark}

\section{Preservation of smoothness}

In this section, we prove that the generalized $ \ell ^{2} $-sum introduced in the previous section preserves smoothness of the dual norm (Proposition \ref{propfrech}).

\begin{lemma} \label{lemlambda}
Let $ X $ be a Banach space and let $ \Lambda \subset B_{X^{*}} $ be compact in the $ w^{*} $-topology such that $ \overline{\mathrm{co}}^{w^{*}} \Lambda = B_{X^{*}} $. If the dual norm is Fr\'echet differentiable at every $ x^{*} \in \Lambda \cap S_{X^{*}} $, then $ X^{*} $ is Fr\'echet smooth.
\end{lemma}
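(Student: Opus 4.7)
My plan is to combine \v{S}mulyan's slice-diameter characterization of Fr\'echet differentiability with a Choquet integral representation of $B_{X^{*}}$ by probability measures on $\Lambda \cap S_{X^{*}}$.

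First, by \v{S}mulyan's theorem (together with the principle of local reflexivity, which lets one pass freely between slices of $B_{X^{**}}$ and $B_{X}$), the conclusion reduces to showing that, for every $x^{*} \in S_{X^{*}}$,
$$ \mathrm{diam} \big\{ x \in B_{X} : x^{*}(x) > 1 - \delta \big\} \to 0 \quad \textrm{as } \delta \to 0^{+} . $$
Working in the separable setting of the paper, $(B_{X^{*}}, w^{*})$ is compact metrizable. Milman's theorem then gives $\mathrm{ext}(B_{X^{*}}) \subset \Lambda$, and since every extreme point of the unit ball lies on the sphere, $\mathrm{ext}(B_{X^{*}}) \subset \Lambda \cap S_{X^{*}}$. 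Choquet's theorem will furnish, for each $x^{*} \in B_{X^{*}}$, a Borel probability measure $\mu$ on $\mathrm{ext}(B_{X^{*}})$ with $x^{*}$ as its barycenter, so that $x(x^{*}) = \int \lambda^{*}(x) \, d\mu(\lambda^{*})$ for all $x \in X$.

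Suppose, toward a contradiction, that the dual norm fails to be Fr\'echet differentiable at some $x^{*} \in S_{X^{*}}$. Then there exist $\epsilon > 0$ and $x_{n}, y_{n} \in B_{X}$ with $x^{*}(x_{n}), x^{*}(y_{n}) \to 1$ and $\Vert x_{n} - y_{n} \Vert \geq \epsilon$. Setting $\delta_{n} = \max \{ 1 - x^{*}(x_{n}), 1 - x^{*}(y_{n}) \}$ (so $\delta_{n} \to 0$) and applying Markov's inequality to the non-negative integrands $1 - \lambda^{*}(x_{n})$ and $1 - \lambda^{*}(y_{n})$ (non-negative because $\Vert \lambda^{*} \Vert = 1$ on the support of $\mu$), I will extract a Borel set $B_{n} \subset \Lambda \cap S_{X^{*}}$ with $\mu(B_{n}) \geq 1 - 2\sqrt{\delta_{n}}$ on which $\lambda^{*}(x_{n}), \lambda^{*}(y_{n}) > 1 - \sqrt{\delta_{n}}$. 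Hence for every $\lambda^{*} \in B_{n}$ the slice $\{ z \in B_{X} : \lambda^{*}(z) > 1 - \sqrt{\delta_{n}} \}$ contains both $x_{n}$ and $y_{n}$, and therefore has diameter at least $\epsilon$.

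To close, for each $\lambda^{*} \in \Lambda \cap S_{X^{*}}$ the Fr\'echet differentiability at $\lambda^{*}$ provides a positive $\delta_{0}(\lambda^{*})$ below which the corresponding slice of $\lambda^{*}$ has diameter less than $\epsilon$. The previous paragraph then forces $\delta_{0}(\lambda^{*}) \leq \sqrt{\delta_{n}}$ for every $\lambda^{*} \in B_{n}$, so $\mu \{ \lambda^{*} : \delta_{0}(\lambda^{*}) \leq \sqrt{\delta_{n}} \} \to 1$ (after passing to a subsequence along which $\delta_{n}$ is monotone). But the sets $\{ \lambda^{*} : \delta_{0}(\lambda^{*}) \leq t \}$ decrease in $t$ to the empty set as $t \to 0^{+}$, so by continuity from above their $\mu$-measures tend to $0$, the desired contradiction. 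The main technical point I anticipate is the Borel measurability of $\lambda^{*} \mapsto \delta_{0}(\lambda^{*})$; this will follow from separability of $X$, since the slice-diameter can be written as a countable supremum over a dense subset of $B_{X}$.
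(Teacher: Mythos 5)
Your argument is correct, but it takes a substantially different and heavier route than the paper's.

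The paper's proof is very short: for $a^{*} \in S_{X^{*}}$ it takes a representing probability measure $\mu$ on $\Lambda$ (this exists for \emph{any} Banach space since $\Lambda$ is $w^{*}$-compact and $\overline{\mathrm{co}}^{w^{*}} \Lambda = B_{X^{*}}$; no metrizability or extreme points needed), observes that $\Vert x^{*} \Vert = 1$ for $\mu$-a.e. $x^{*}$, and then integrates the symmetric difference quotient $\big( \Vert x^{*} + \triangle x^{*} \Vert + \Vert x^{*} - \triangle x^{*} \Vert - 2 \big) / \Vert \triangle x^{*} \Vert$, which is pointwise in $[0, 2]$, interchanging limit and integral by dominated convergence. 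This gives Fr\'echet differentiability at $a^{*}$ directly from the standard two-sided-quotient characterization, with no reference to slices. Your proof instead routes through \v{S}mulyan's slice characterization of Fr\'echet differentiability of the dual norm, Milman's theorem to confine the extreme points inside $\Lambda \cap S_{X^{*}}$, Choquet's representation theorem, Markov's inequality, and continuity of measure from above. These steps do assemble into a correct proof (the measurability issue you flag can be handled cleanly by working with the Borel functions $\lambda^{*} \mapsto \sup\{\Vert z-w\Vert : z,w \in D, \ \lambda^{*}(z),\lambda^{*}(w) > 1-t\}$ for a countable dense $D \subset B_{X}$ and rational $t$, rather than with $\delta_{0}(\lambda^{*})$ itself), but you pay two costs: (i) invoking Choquet restricts you to the metrizable, i.e.\ separable, case, whereas the lemma as stated in the paper has no separability hypothesis and the paper's proof needs none; and (ii) you use Milman and Choquet where a representing measure on $\Lambda$ suffices---you never need the measure to live on $\mathrm{ext}(B_{X^{*}})$, only on $\Lambda \cap S_{X^{*}}$, and the paper gets that almost for free from $\Vert a^{*} \Vert = 1$. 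The local-reflexivity remark at the start is also more than is needed: the version of \v{S}mulyan's lemma for dual norms already speaks of slices of $B_{X}$, not $B_{X^{**}}$. In short, your proof is sound but longer and less general; the paper's dominated-convergence argument is the sharper tool here.
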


\begin{proof}
Let $ a^{*} \in S_{X^{*}} $. There is a probability measure $ \mu $ on $ \Lambda $ such that
$$ a^{*} = \int _{\Lambda } x^{*} d\mu (x^{*}). $$
We have
$$ 1 = \Vert a^{*} \Vert \leq \int _{\Lambda } \Vert x^{*} \Vert d\mu (x^{*}). $$
Since $ \Vert x^{*} \Vert \leq 1 $ for $ x^{*} \in \Lambda $, we have $ \Vert x^{*} \Vert = 1 $ for $ \mu $-almost every $ x^{*} \in \Lambda $. It follows that the dual norm is Fr\'echet differentiable at $ \mu $-almost every $ x^{*} \in \Lambda $. We write
\begin{align*}
\lim _{\triangle x^{*} \rightarrow 0} & \frac{\Vert a^{*} + \triangle x^{*} \Vert + \Vert a^{*} - \triangle x^{*} \Vert - 2}{\Vert \triangle x^{*} \Vert } \\
 & = \lim _{\triangle x^{*} \rightarrow 0} \frac{\Vert \int _{\Lambda } (x^{*} + \triangle x^{*}) d\mu (x^{*}) \Vert + \Vert \int _{\Lambda } (x^{*} - \triangle x^{*}) d\mu (x^{*}) \Vert - 2}{\Vert \triangle x^{*} \Vert } \\
 & \leq \lim _{\triangle x^{*} \rightarrow 0} \int _{\Lambda } \frac{\Vert x^{*} + \triangle x^{*} \Vert + \Vert x^{*} - \triangle x^{*} \Vert - 2}{\Vert \triangle x^{*} \Vert } d\mu (x^{*}) \\
 & = \int _{\Lambda } \lim _{\triangle x^{*} \rightarrow 0} \frac{\Vert x^{*} + \triangle x^{*} \Vert + \Vert x^{*} - \triangle x^{*} \Vert - 2}{\Vert \triangle x^{*} \Vert } d\mu (x^{*}) \\
 & = 0.
\end{align*}
So, the dual norm is Fr\'echet differentiable at $ a^{*} $.
\end{proof}

\begin{lemma} \label{lempart}
Let $ X, Y $ be Banach spaces and $ \Vert \cdot \Vert $ be a norm on $ X \oplus Y $. Let $ x^{*} + y^{*} \in X^{*} \oplus Y^{*} $ be such that
\begin{itemize}
\item $ 1 = \Vert x^{*} \Vert = \Vert x^{*} + y^{*} \Vert $,
\item the partial Fr\'echet differential $ \partial / \partial x^{*} $ of the dual norm exists at $ x^{*} $,
\item the partial Fr\'echet differential $ \partial / \partial y^{*} $ of the dual norm equals to $ 0 $ at $ x^{*} + y^{*} $.
\end{itemize}
Then the dual norm is Fr\'echet differentiable at $ x^{*} + y^{*} $.
\end{lemma}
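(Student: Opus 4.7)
The plan is to invoke the classical Smulyan-type criterion: a continuous convex function $f$ on a Banach space is Fr\'echet differentiable at $z$ if and only if $f(z+h) + f(z-h) - 2f(z) = o(\Vert h \Vert)$ as $h \to 0$. I apply this to $f := \Vert \cdot \Vert$ on $(X \oplus Y)^* = X^* \oplus Y^*$ at $z^* := x^* + y^*$, where $f(z^*) = 1$.

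Writing $h = \Delta x^* + \Delta y^* \in X^* \oplus Y^*$, the key observation is the midpoint identity
\[ z^* + \Delta x^* + \Delta y^* = \tfrac{1}{2}(z^* + 2\Delta x^*) + \tfrac{1}{2}(z^* + 2\Delta y^*), \]
so by convexity of $f$,
\[ f(z^* + h) \leq \tfrac{1}{2} f(z^* + 2\Delta x^*) + \tfrac{1}{2} f(z^* + 2\Delta y^*), \]
and analogously for $f(z^* - h)$. Summing these and subtracting $2f(z^*) = 2$ yields
\[ f(z^* + h) + f(z^* - h) - 2 \leq \tfrac{1}{2}\bigl[f(z^* + 2\Delta x^*) + f(z^* - 2\Delta x^*) - 2\bigr] + \tfrac{1}{2}\bigl[f(z^* + 2\Delta y^*) + f(z^* - 2\Delta y^*) - 2\bigr]. \]

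Now the hypothesis that $\partial / \partial x^*$ exists at $x^*$ means $u^* \mapsto f(u^* + y^*)$ is Fr\'echet differentiable at $u^* = x^*$, so its symmetric second difference $f(z^* + 2\Delta x^*) + f(z^* - 2\Delta x^*) - 2$ is $o(\Vert \Delta x^* \Vert)$. The hypothesis that $\partial / \partial y^*$ vanishes at $x^* + y^*$ means $v^* \mapsto f(x^* + v^*)$ is Fr\'echet differentiable at $v^* = y^*$ with zero derivative, so $f(x^* + y^* \pm 2\Delta y^*) = 1 + o(\Vert \Delta y^* \Vert)$ and the second bracket is $o(\Vert \Delta y^* \Vert)$. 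Since the projections $X^* \oplus Y^* \to X^*, Y^*$ are bounded, both $\Vert \Delta x^* \Vert$ and $\Vert \Delta y^* \Vert$ are $O(\Vert h \Vert)$, so the right-hand side above is $o(\Vert h \Vert)$.

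The opposite inequality $f(z^* + h) + f(z^* - h) - 2 \geq 0$ is immediate from convexity of $f$. Combining the two, $f(z^* + h) + f(z^* - h) - 2 = o(\Vert h \Vert)$, and the Smulyan criterion delivers Fr\'echet differentiability of $f$ at $z^*$. The only real obstacle is spotting the midpoint decomposition that decouples the two coordinates; after that, the bookkeeping with the partial Fr\'echet hypotheses is routine.
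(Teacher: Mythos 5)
Your Smulyan-plus-midpoint scheme is clean, but it rests on a misreading of the second hypothesis, and the misreading conceals exactly the step the lemma is designed to establish. The hypothesis says that the partial Fr\'echet differential $\partial/\partial x^*$ of the dual norm exists \emph{at} $x^*$, i.e.\ at the point $(x^*,0)$ of $X^*\oplus Y^*$; it does \emph{not} say that $u^*\mapsto \Vert u^*+y^*\Vert$ is Fr\'echet differentiable at $u^*=x^*$, which is what you use to make the first bracket $f(z^*+2\Delta x^*)+f(z^*-2\Delta x^*)-2$ with $z^*=x^*+y^*$ be $o(\Vert\Delta x^*\Vert)$. Indeed, the paper's proof opens with ``It remains to show that the partial Fr\'echet differential $\partial/\partial x^*$ of the dual norm exists at $x^*+y^*$'' --- transferring this partial differentiability from $(x^*,0)$ to $(x^*,y^*)$ is the entire content of the lemma, and it is not automatic.

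That transfer is where the norm equality $\Vert x^*\Vert = \Vert x^*+y^*\Vert = 1$ and the vanishing of $\partial/\partial y^*$ at $x^*+y^*$ do real work. The paper uses the convexity decomposition
\[
x^*+y^*+\Delta x^* = \bigl[(1-C\Vert\Delta x^*\Vert)\,x^*+y^*\bigr] + \bigl[(C\Vert\Delta x^*\Vert)\,x^*+\Delta x^*\bigr],
\]
applies the triangle inequality, rescales each piece to the unit sphere, and then invokes the $x^*$-differentiability at $(x^*,0)$ for the second piece and the $y^*$-differentiability at $(x^*,y^*)$ for the first. Your midpoint argument would correctly assemble the two partial Fr\'echet differentials into a total one \emph{if both were known at $x^*+y^*$}; as it stands, it assumes the hard half. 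To repair the proof you would need to add the transfer argument (or an equivalent), at which point the Smulyan step becomes the straightforward finish the paper also handles implicitly (``Then the dual norm is Fr\'echet differentiable'' once both partials are known).
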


\begin{proof}
It remains to show that the partial Fr\'echet differential $ \partial / \partial x^{*} $ of the dual norm exists at $ x^{*} + y^{*} $. Let $ \Gamma $ be the partial Fr\'echet differential $ \partial / \partial x^{*} $ of the dual norm at $ x^{*} $. For a fixed $ \varepsilon > 0 $, we show that
$$ \Vert x^{*} + y^{*} + \triangle x^{*} \Vert \leq 1 + \Gamma (\triangle x^{*}) + (\Vert y^{*} \Vert + 1) \varepsilon \Vert \triangle x^{*} \Vert $$
for every $ \triangle x^{*} $ from a neighbourhood of $ 0 $ in $ X^{*} $. Let $ C > 0 $ and $ \delta > 0 $ be chosen so that
$$ \Vert \triangle x^{*} \Vert \leq 1/C \quad \Rightarrow \quad \Vert x^{*} + \triangle x^{*} \Vert \leq 1 + \Gamma (\triangle x^{*}) + \varepsilon \Vert \triangle x^{*} \Vert , $$
$$ \Vert \triangle y^{*} \Vert \leq \delta \quad \Rightarrow \quad \Vert x^{*} + y^{*} + \triangle y^{*} \Vert \leq 1 + (\varepsilon /C) \Vert \triangle y^{*} \Vert $$
for $ \triangle x^{*} \in X^{*} $ and $ \triangle y^{*} \in Y^{*} $.

The inequalities
$$ 1 - C\Vert \triangle x^{*} \Vert > 0, \quad \quad \frac{C\Vert \triangle x^{*} \Vert }{1 - C\Vert \triangle x^{*} \Vert } \Vert y^{*} \Vert < \delta $$
define a neighbourhood of $ 0 $ in $ X^{*} $. For every $ \triangle x^{*} \neq 0 $ from this neighbourhood, we have
\begin{eqnarray*}
\Vert x^{*} + y^{*} + \triangle x^{*} \Vert & \leq & \Big\Vert (1 - C\Vert \triangle x^{*} \Vert ) x^{*} + y^{*} \Big\Vert + \Big\Vert (C\Vert \triangle x^{*} \Vert ) x^{*} + \triangle x^{*} \Big\Vert \\
 & = & (1 - C\Vert \triangle x^{*} \Vert ) \Big\Vert x^{*} + y^{*} + \frac{C\Vert \triangle x^{*} \Vert }{1 - C\Vert \triangle x^{*} \Vert } y^{*} \Big\Vert \\
 & & \quad + C\Vert \triangle x^{*} \Vert \Big\Vert x^{*} + \frac{1}{C\Vert \triangle x^{*} \Vert } \triangle x^{*} \Big\Vert \\
 & \leq & (1 - C\Vert \triangle x^{*} \Vert ) \Big( 1 + (\varepsilon /C) \Big\Vert \frac{C\Vert \triangle x^{*} \Vert }{1 - C\Vert \triangle x^{*} \Vert } y^{*} \Big\Vert \Big) \\
 & & \quad + C\Vert \triangle x^{*} \Vert \Big( 1 + \frac{1}{C\Vert \triangle x^{*} \Vert } \Gamma (\triangle x^{*}) + \varepsilon \Big\Vert \frac{1}{C\Vert \triangle x^{*} \Vert } \triangle x^{*} \Big\Vert \Big) \\
 & = & 1 + \Gamma (\triangle x^{*}) + (\Vert y^{*} \Vert + 1) \varepsilon \Vert \triangle x^{*} \Vert .
\end{eqnarray*}
\end{proof}

In the remainder of the section, we work with the notation from Definition \ref{defsum}. Note that it follows from the definition of the norm $ \Vert \cdot \Vert _{\Sigma } $ that
$$ B_{(\Sigma (X \oplus Y_{k}))^{*}} = \overline{\mathrm{co}}^{w^{*}} \Lambda (X \oplus Y_{k}). $$

\begin{lemma} \label{lembound}
If $ x^{*} \in X^{*}, y_{k}^{*} \in Y_{k}^{*}, k \in \mathbb{N}, $ are such that $ \sup _{k \in \mathbb{N}} \Vert x^{*} + y_{k}^{*} \Vert _{X \oplus Y_{k}} < \infty $ and $ 0 \leq \alpha _{k} \leq 1, k \in \mathbb{N}, $ satisfy $ \sum _{k \in \mathbb{N}} \alpha _{k}^{2} \leq 1 $, then
$$ \Big\Vert x^{*} + \sum _{k \in \mathbb{N}} \alpha _{k}y_{k}^{*} \Big\Vert _{\Sigma } \leq \sup _{k \in \mathbb{N}} \Vert x^{*} + y_{k}^{*} \Vert _{X \oplus Y_{k}}. $$
\end{lemma}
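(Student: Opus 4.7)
The plan is to read the inequality directly off the definition of $\Lambda(X \oplus Y_k)$ after a single rescaling. Set $M = \sup_{k \in \mathbb{N}} \|x^* + y_k^*\|_{X \oplus Y_k}$, which is finite by hypothesis. First I would dispose of the trivial case $M = 0$: monotonicity of $\|\cdot\|_{X \oplus Y_k}$ dualises (by testing $x^*$ and $y_k^*$ against elements of $X$ and $Y_k$ respectively, exactly as in the proof of Lemma \ref{lemequiv}) to give $\|x^*\|_X \leq \|x^* + y_k^*\|_{X \oplus Y_k}$ and $\|y_k^*\|_{Y_k} \leq \|x^* + y_k^*\|_{X \oplus Y_k}$, so $M = 0$ forces $x^* = 0$ and every $y_k^* = 0$. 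Assume henceforth $M > 0$.

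Set $\tilde{x}^* = x^*/M$ and $\tilde{y}_k^* = y_k^*/M$. Then $\|\tilde{x}^* + \tilde{y}_k^*\|_{X \oplus Y_k} \leq 1$ for every $k$, while the hypotheses $0 \leq \alpha_k \leq 1$ and $\sum_k \alpha_k^2 \leq 1$ are preserved. These are precisely the three conditions appearing in the definition of $\Lambda(X \oplus Y_k)$, so
$$ \frac{1}{M}\Big(x^* + \sum_{k \in \mathbb{N}} \alpha_k y_k^*\Big) = \tilde{x}^* + \sum_{k \in \mathbb{N}} \alpha_k \tilde{y}_k^* \in \Lambda(X \oplus Y_k). $$
Every element of $\Lambda(X \oplus Y_k)$ has $\Sigma$-dual norm at most $1$ directly from the formula $\|z\|_\Sigma = \sup\{|z^*(z)| : z^* \in \Lambda(X \oplus Y_k)\}$, so multiplying back through by $M$ yields the stated bound.

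The only minor verification needed is that the formal expression $\sum_k \alpha_k y_k^*$ in fact defines a continuous functional on $\Sigma(X \oplus Y_k)$, since otherwise the left-hand side of the conclusion is not well-defined. I would handle this by Cauchy--Schwarz: using the bound $\|y_k^*\|_{Y_k} \leq M$ noted above, the hypothesis $\sum_k \alpha_k^2 \leq 1$, and the summability $\sum_k \|y_k\|_{Y_k}^2 < \infty$ built into the definition of $\Sigma(X \oplus Y_k)$, the series $x^*(x) + \sum_k \alpha_k y_k^*(y_k)$ converges absolutely on each $z = x + \sum_k y_k \in \Sigma(X \oplus Y_k)$. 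Beyond this, I anticipate no real obstacle: the lemma is essentially a rescaled reading of the definition of $\Lambda$, serving as a convenient dual-side upper bound analogous in spirit to the upper estimate in Lemma \ref{lemequiv}(A).
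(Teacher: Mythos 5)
Your argument is the paper's own proof: the paper normalizes to $\sup_k\|x^*+y_k^*\|_{X\oplus Y_k}=1$ and reads off membership in $\Lambda(X\oplus Y_k)$, which you do via the explicit rescaling by $M$, plus a separate disposal of the trivial case $M=0$ and a brief convergence check. Same approach, correct, just spelled out in more detail.
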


\begin{proof}
We may assume that
$$ \sup _{k \in \mathbb{N}} \Vert x^{*} + y_{k}^{*} \Vert _{X \oplus Y_{k}} = 1. $$
Under this assumption, we have
$$ x^{*} + \sum _{k \in \mathbb{N}} \alpha _{k}y_{k}^{*} \in \Lambda (X \oplus Y_{k}), $$
and so
$$ \Big\Vert x^{*} + \sum _{k \in \mathbb{N}} \alpha _{k}y_{k}^{*} \Big\Vert _{\Sigma } \leq 1 = \sup _{k \in \mathbb{N}} \Vert x^{*} + y_{k}^{*} \Vert _{X \oplus Y_{k}}. $$
\end{proof}

\begin{lemma} \label{lemfrech}
Let the dual of $ X \oplus Y_{k} $ be Fr\'echet smooth for every $ k \in \mathbb{N} $. Then the dual norm of $ \Vert \cdot \Vert _{\Sigma } $ is Fr\'echet differentiable at every $ z^{*} \in \Lambda (X \oplus Y_{k}) \cap S_{(\Sigma (X \oplus Y_{k}))^{*}}, z^{*} = x^{*} + \sum _{k \in \mathbb{N}} y_{k}^{*}, $ for which $ \Vert x^{*} \Vert _{X} < 1 $.
\end{lemma}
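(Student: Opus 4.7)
The plan is to establish Fr\'echet differentiability of $ \Vert \cdot \Vert _{\Sigma } $ at $ z^{*} $ directly, combining the upper bound of Lemma \ref{lembound} with the Fr\'echet smoothness of each dual $ (X \oplus Y_{k})^{*} $. Note that Lemma \ref{lempart} does not apply to the most obvious decomposition $ \Sigma = X \oplus W $, where $ W \subset \Sigma $ consists of the elements with zero $ X $-component: by Proposition \ref{propstab} one has $ \Vert x^{*} \Vert _{\Sigma } = \Vert x^{*} \Vert _{X} < 1 $, which violates the normalization $ \Vert x^{*} \Vert = 1 $ of that lemma. Instead, the strict inequality $ \Vert x^{*} \Vert _{X} < 1 $ will provide the slack to be exploited.

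First I would fix the representation $ z^{*} = x^{*} + \sum _{k} \alpha _{k} \tilde{y}_{k}^{*} $ with $ \Vert x^{*} + \tilde{y}_{k}^{*} \Vert _{X \oplus Y_{k}} \leq 1 $ and $ \sum \alpha _{k}^{2} \leq 1 $ coming from $ z^{*} \in \Lambda (X \oplus Y_{k}) $. For a small perturbation $ \Delta z^{*} = \Delta x^{*} + \sum _{k} \Delta y_{k}^{*} $ I would rewrite
$$ z^{*} + \Delta z^{*} = (x^{*} + \Delta x^{*}) + \sum _{k} \alpha _{k} \bigl( \tilde{y}_{k}^{*} + \alpha _{k}^{-1} \Delta y_{k}^{*} \bigr), $$
handling $ \alpha _{k} = 0 $ by absorbing the term into a modified representation (which is permitted because $ \Vert x^{*} \Vert _{X} < 1 $ leaves room in the budget $ \sum \alpha _{k}^{2} \leq 1 $). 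Lemma \ref{lembound} then yields
$$ \Vert z^{*} + \Delta z^{*} \Vert _{\Sigma } \leq \sup _{k} \bigl\Vert (x^{*} + \Delta x^{*}) + (\tilde{y}_{k}^{*} + \alpha _{k}^{-1} \Delta y_{k}^{*}) \bigr\Vert _{X \oplus Y_{k}}. $$

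For each $ k $, Fr\'echet smoothness of $ (X \oplus Y_{k})^{*} $ at $ x^{*} + \tilde{y}_{k}^{*} $ provides a linear functional $ \Gamma _{k} $ with
$$ \bigl\Vert (x^{*} + \Delta x^{*}) + (\tilde{y}_{k}^{*} + \alpha _{k}^{-1} \Delta y_{k}^{*}) \bigr\Vert _{X \oplus Y_{k}} \leq \Vert x^{*} + \tilde{y}_{k}^{*} \Vert _{X \oplus Y_{k}} + \Gamma _{k}(\Delta x^{*} + \alpha _{k}^{-1} \Delta y_{k}^{*}) + o(\Vert \cdot \Vert ). $$
Taking the supremum over $ k $, and combining with a matching lower bound obtained by pairing $ z^{*} + \Delta z^{*} $ against an element of $ B_{\Sigma } $ nearly realising $ z^{*} $, would give the Fr\'echet differentiability at $ z^{*} $.

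The main obstacle is the uniformity over $ k $ of the $ o $-term on the right-hand side, because $ \alpha _{k}^{-1} $ can be arbitrarily large. The assumption $ \Vert x^{*} \Vert _{X} < 1 $ is decisive here: when $ \alpha _{k} $ is small, an appropriate choice of representation keeps $ \Vert \tilde{y}_{k}^{*} \Vert _{Y_{k}} $ controlled so that $ \Vert x^{*} + \tilde{y}_{k}^{*} \Vert _{X \oplus Y_{k}} $ stays close to $ \Vert x^{*} \Vert _{X} < 1 $, whence such indices $ k $ do not contribute to the supremum in the limit and may be discarded; for $ \alpha _{k} $ bounded below, the factor $ \alpha _{k}^{-1} $ is bounded and the Fr\'echet estimate is uniform in $ k $. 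This dichotomy is where the strictness $ \Vert x^{*} \Vert _{X} < 1 $ is used, and is expected to be the technical heart of the argument.
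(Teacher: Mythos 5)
Your strategy of invoking Lemma~\ref{lembound} after rewriting
$$ z^{*} + \triangle z^{*} = (x^{*} + \triangle x^{*}) + \sum _{k} \alpha _{k} \bigl( \tilde{y}_{k}^{*} + \alpha _{k}^{-1} \triangle y_{k}^{*} \bigr) $$
is the right opening move, but keeping the \emph{original} coefficients $\alpha _{k}$ cannot produce a tight estimate. After expanding each summand via the Fr\'echet differentiability of $ (X \oplus Y_{k})^{*} $ at $ x^{*} + \tilde y_{k}^{*} $, with differential $ x_{k} + y_{k} $, the $ k $-th term of the supremum becomes
$$ 1 + \triangle x^{*}(x_{k}) + \alpha _{k}^{-1} \triangle y_{k}^{*}(y_{k}) + o(\cdot), $$
and the linear part varies with $ k $. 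A supremum of such affine-in-$\triangle z^{*}$ expressions is convex, not linear, and can overestimate $ \Vert z^{*} + \triangle z^{*} \Vert _{\Sigma } $ by a quantity of order $ \Vert \triangle z^{*} \Vert $ rather than $ o(\Vert \triangle z^{*} \Vert ) $: for instance, with two active indices $ \alpha _{1} = \alpha _{2} $ and a perturbation with $ \triangle y_{1}^{*}(y_{1}) = -\triangle y_{2}^{*}(y_{2}) \ne 0 $, the supremum keeps the larger term and registers a first-order overshoot. The resolution in the paper, which your sketch misses, is to replace each $ \alpha _{k} $ by a perturbed coefficient $ \beta _{k} = \alpha _{k} + \triangle \alpha _{k} $, with $ \triangle \alpha _{k} $ given by (\ref{frech12}) chosen precisely so that, after renormalizing $ (\beta _{k}) $ to an $ \ell ^{2} $-unit vector, \emph{every} index has the same linear expansion $ 1 + \triangle z^{*}(z) $ (identity (\ref{frech23})); the orthogonality $ \sum _{k} \alpha _{k} \triangle \alpha _{k} = 0 $, a consequence of the specific formula (\ref{frech8}) defining the candidate differential $ z $, is what keeps the renormalization cost at $ o(\Vert \triangle z^{*} \Vert ) $. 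This equalization of the per-index linear terms is the technical heart of the proof.

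Two subsidiary claims in your sketch are also contradicted by facts the paper establishes at the outset. You assert that $ \Vert x^{*} \Vert _{X} < 1 $ ``leaves room in the budget $ \sum _{k} \alpha _{k}^{2} \le 1 $'' for absorbing indices with $ \alpha _{k} = 0 $; in fact, $ \Vert z^{*} \Vert _{\Sigma } = 1 $ together with $ \Vert x^{*} \Vert _{X} < 1 $ forces $ \sum _{k} \alpha _{k}^{2} = 1 $ exactly (see (\ref{frech3})), so there is no slack, and the paper instead handles $ \alpha _{k} = 0 $ by assigning $ \beta _{k} $ proportional to $ \Vert \triangle y_{k}^{*} \Vert $ and renormalizing. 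You also propose re-choosing the representation for small $ \alpha _{k} $ so that $ \Vert x^{*} + \tilde y_{k}^{*} \Vert _{X \oplus Y_{k}} $ stays near $ \Vert x^{*} \Vert _{X} < 1 $; but (\ref{frech4}) shows that \emph{every} representation of a norm-one $ z^{*} $ with $ \Vert x^{*} \Vert _{X} < 1 $ in $ \Lambda (X \oplus Y_{k}) $ must have $ \Vert x^{*} + y_{k}^{*} \Vert _{X \oplus Y_{k}} = 1 $ whenever $ \alpha _{k} > 0 $, however small. The hypothesis $ \Vert x^{*} \Vert _{X} < 1 $ is indeed decisive, but it acts through (\ref{frech3}), (\ref{frech4}), and the lower bound $ y_{k}^{*}(y_{k}) \ge 1 - \Vert x^{*} \Vert _{X} $ of (\ref{frech7}), not through the dichotomy you describe.
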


\begin{proof}
Throughout the proof, we write simply $ \Vert \cdot \Vert $ instead of $ \Vert \cdot \Vert _{\Sigma }, \Vert \cdot \Vert _{X} $ and $ \Vert \cdot \Vert _{X \oplus Y_{k}} $ (this is allowed by Proposition \ref{propstab}). We note that all the considered spaces are reflexive (by the well-known fact that a space is reflexive if its dual is Fr\'echet smooth \cite[Theorem 8.6]{fhhmpz}).

Let $ z^{*} \in \Lambda (X \oplus Y_{k}) \cap S_{(\Sigma (X \oplus Y_{k}))^{*}} $ be expressed by
\begin{equation} \label{frech1}
z^{*} = x^{*} + \sum _{k \in \mathbb{N}} \alpha _{k} y_{k}^{*}
\end{equation}
where $ x^{*} \in X^{*}, y_{k}^{*} \in Y_{k}^{*}, \Vert x^{*} + y_{k}^{*} \Vert \leq 1, 0 \leq \alpha _{k} \leq 1, \sum _{k \in \mathbb{N}} \alpha _{k}^{2} \leq 1 $. Let moreover
\begin{equation} \label{frech2}
\Vert x^{*} \Vert < 1.
\end{equation}
Let us show first that
\begin{equation} \label{frech3}
\sum _{k \in \mathbb{N}} \alpha _{k}^{2} = 1,
\end{equation}
\begin{equation} \label{frech4}
\Vert x^{*} + y_{k}^{*} \Vert = 1 \quad \textrm{when $ \alpha _{k} > 0 $}.
\end{equation}
For every $ t \in [0, 1] $, we have
\begin{equation} \label{frech5}
\Vert x^{*} + ty_{k}^{*} \Vert \leq 1 - (1 - \Vert x^{*} \Vert )(1 - t)
\end{equation}
(because $ t \mapsto \Vert x^{*} + ty_{k}^{*} \Vert $ is convex, $ t \mapsto 1 - (1 - \Vert x^{*} \Vert )(1 - t) $ is affine and the inequality is satisfied for $ t = 0 $ and $ t = 1 $). Assume that (\ref{frech3}) is not satisfied. For some $ 0 < t < 1 $, we have
$$ \sum _{k \in \mathbb{N}} \Big( \frac{\alpha _{k}}{t} \Big) ^{2} \leq 1. $$
By (\ref{frech5}) and Lemma \ref{lembound},
$$ 1 = \Vert z^{*} \Vert = \Big\Vert x^{*} + \sum _{k \in \mathbb{N}} \frac{\alpha _{k}}{t} (ty_{k}^{*}) \Big\Vert \leq \sup _{k \in \mathbb{N}} \Vert x^{*} + t y_{k}^{*} \Vert \leq 1 - (1 - \Vert x^{*} \Vert )(1 - t), $$
which is not possible. So, (\ref{frech3}) is proved.

Assume that (\ref{frech4}) is not satisfied. It is sufficient to find another expression of $ z^{*} $ witnessing that $ z^{*} \in \Lambda (X \oplus Y_{k}) $ for which the analogue of (\ref{frech3}) is not satisfied. For some $ j $ with $ \alpha _{j} > 0 $, we have $ \Vert x^{*} + y_{j}^{*} \Vert < 1 $. For some $ 0 < \alpha _{j}' < \alpha _{j} $, we have
$$ \Big\Vert x^{*} + \frac{\alpha _{j}}{\alpha _{j}'} y_{j}^{*} \Big\Vert \leq 1. $$
We have
$$ z^{*} = x^{*} + \alpha _{j}' \Big( \frac{\alpha _{j}}{\alpha _{j}'} y_{j}^{*} \Big) + \sum _{k \neq j} \alpha _{k} y_{k}^{*} $$
but
$$ (\alpha _{j}')^{2} + \sum _{k \neq j} \alpha _{k}^{2} < 1. $$
So, (\ref{frech4}) is proved.

We assume that the duals of $ X \oplus Y_{k} $ are Fr\'echet smooth. By (\ref{frech4}), there is, for every $ k $ with $ \alpha _{k} > 0 $, a point $ x_{k} + y_{k} \in S_{X \oplus Y_{k}} $ such that
\begin{equation} \label{frech6}
\Vert x^{*} + y_{k}^{*} + h^{*} \Vert = 1 + h^{*}(x_{k} + y_{k}) + o(\Vert h^{*} \Vert ), \quad \quad h^{*} \in X^{*} \oplus Y_{k}^{*}.
\end{equation}
We have
\begin{equation} \label{frech7}
y_{k}^{*}(y_{k}) \geq 1 - \Vert x^{*} \Vert \quad \textrm{when $ \alpha _{k} > 0 $},
\end{equation}
as $ y_{k}^{*}(y_{k}) = (x^{*} + y_{k}^{*})(x_{k} + y_{k}) - x^{*}(x_{k}) \geq 1 - \Vert x^{*} \Vert \Vert x_{k} \Vert \geq 1 - \Vert x^{*} \Vert $.

We define
\begin{equation} \label{frech8}
z = \Big[ \sum _{\alpha _{k} > 0} \frac{\alpha _{k}^{2}}{y_{k}^{*}(y_{k})} \Big] ^{-1} \Big[ \sum _{\alpha _{k} > 0} \frac{\alpha _{k}^{2}}{y_{k}^{*}(y_{k})}\Big( x_{k} + \frac{1}{\alpha _{k}}y_{k} \Big) \Big] .
\end{equation}
The formula defines an element of $ \Sigma (X \oplus Y_{k}) $ indeed, due to (\ref{frech7}) and the observation that $ \Vert y_{k} \Vert _{Y_{k}} = \Vert y_{k} \Vert \leq \Vert x_{k} + y_{k} \Vert + \Vert -x_{k} \Vert \leq 2 $.

We claim that $ z $ is the Fr\'echet differential of the dual norm at $ z^{*} $. For an $ \varepsilon > 0 $, we find a $ \delta > 0 $ such that
$$ \Vert \triangle z^{*} \Vert \leq \delta \quad \Rightarrow \quad \Vert z^{*} + \triangle z^{*} \Vert \leq 1 + \triangle z^{*} (z) + 12 \varepsilon \Vert \triangle z^{*} \Vert . \leqno (*) $$
So, let $ \varepsilon > 0 $ be fixed. We will assume that $ \varepsilon \leq 1 $. We choose a large enough $ C > 0 $, small enough $ \delta _{00} > 0, \delta _{0} > 0 $ and $ \delta > 0 $ and a finite $ S \subset \mathbb{N} $ so that
\begin{eqnarray}
\frac{C}{4}(1 - \Vert x^{*} \Vert ) & \geq & 3 + \Vert z \Vert , \label{frech9a} \\
\delta \leq \delta _{0} \leq \delta _{00} & \leq & 1, \label{frech9b} \\
(10C^{2} + 8C) \cdot \delta _{00} & \leq & \varepsilon , \label{frech9c} \\
\Big( \sum _{k \notin S} \alpha _{k}^{2} \Big) ^{1/2} & \leq & \delta _{00}, \label{frech9d} \\
\alpha _{k} & > & 0 \quad \textrm{for } k \in S, \label{frech9e} \\
\hspace{-3cm} \delta _{0}^{1/3} \cdot \frac{1}{y_{k}^{*}(y_{k})} \Big\Vert x_{k} + \frac{1}{\alpha _{k}}y_{k} - z \Big\Vert & \leq & \frac{1}{2} \quad \textrm{for } k \in S, \label{frech9f} \\
\delta _{0}^{1/3} & \leq & \alpha _{k} \quad \textrm{for } k \in S, \label{frech9g} \\
\delta _{0}^{1/3} & \leq & \varepsilon , \label{frech9h}
\end{eqnarray}
\begin{equation} \label{frech9i}
\left\{ \parbox{11cm}{
$$ k \in S \quad \& \quad h^{*} \in X^{*} \oplus Y_{k}^{*} \quad \& \quad \Vert h^{*} \Vert \leq \frac{15}{\delta _{0}^{1/3}} \cdot \delta \quad \Rightarrow $$
$$ \Rightarrow \quad \Vert x^{*} + y_{k}^{*} + h^{*} \Vert \leq 1 + h^{*}(x_{k} + y_{k}) + \Big[ \frac{15}{\delta _{0}^{1/3}} \Big] ^{-1} \cdot \varepsilon \Vert h^{*} \Vert . $$
}
\right.
\end{equation}
To prove ($ * $), choose
\begin{equation} \label{frech10}
\triangle z^{*} = \triangle x^{*} + \sum _{k \in \mathbb{N}} \triangle y_{k}^{*}, \quad \quad 0 < \Vert \triangle z^{*} \Vert \leq \delta ,
\end{equation}
where $ \triangle x^{*} \in X^{*}, \triangle y_{k}^{*} \in Y_{k}^{*} $. Note that
\begin{equation} \label{frech11}
\Big( \sum _{k \in \mathbb{N}} \Vert \triangle y_{k}^{*} \Vert ^{2} \Big) ^{1/2} \leq 2 \Vert \triangle z^{*} \Vert .
\end{equation}
Indeed, we can apply Lemma \ref{lemequiv}(B) on $ \triangle y_{k}^{*}, k \in \mathbb{N}, $ and on $ \triangle z^{*} $ to obtain
$$ \Big( \sum _{k \in \mathbb{N}} \Vert \triangle y_{k}^{*} \Vert ^{2} \Big) ^{1/2} \leq \Big( \sum _{k \in \mathbb{N}} (2 \Vert \triangle y_{k}^{*} \Vert _{Y_{k}})^{2} \Big) ^{1/2} \leq 2 \Vert \triangle z^{*} \Vert . $$

We define
\begin{equation} \label{frech12}
\left\{ \parbox{11cm}{
$$ \triangle \alpha _{k} = \frac{\alpha _{k}}{y_{k}^{*}(y_{k})} \triangle z^{*} \Big( x_{k} + \frac{1}{\alpha _{k}}y_{k} - z \Big) \quad \textrm{when $ \alpha _{k} > 0 $,} $$
$$ \triangle \alpha _{k} = 0 \quad \textrm{when $ \alpha _{k} = 0 $.} $$
}
\right.
\end{equation}
It is easy to obtain from the definition of $ \triangle \alpha _{k} $ that
\begin{equation} \label{frech23}
\triangle x^{*}(x_{k}) - \frac{\triangle \alpha _{k}}{\alpha _{k}} y_{k}^{*}(y_{k}) + \frac{1}{\alpha _{k}} \triangle y_{k}^{*}(y_{k}) = \triangle z^{*} (z)\quad \quad \textrm{when $ \alpha _{k} > 0 $.}
\end{equation}
We have
\begin{equation} \label{frech13}
\sum _{k \in \mathbb{N}} \alpha _{k} \triangle \alpha _{k} = 0,
\end{equation}
as
\begin{eqnarray*}
\sum _{k \in \mathbb{N}} \alpha _{k} \triangle \alpha _{k} & = & \sum _{\alpha _{k} > 0} \frac{\alpha _{k}^{2}}{y_{k}^{*}(y_{k})} \triangle z^{*} \Big( x_{k} + \frac{1}{\alpha _{k}}y_{k} - z \Big) \\
 & = & \sum _{\alpha _{k} > 0} \frac{\alpha _{k}^{2}}{y_{k}^{*}(y_{k})} \triangle z^{*} \Big( x_{k} + \frac{1}{\alpha _{k}}y_{k} \Big) - \sum _{\alpha _{k} > 0} \frac{\alpha _{k}^{2}}{y_{k}^{*}(y_{k})} \triangle z^{*} (z) \\
 & = & \sum _{\alpha _{k} > 0} \frac{\alpha _{k}^{2}}{y_{k}^{*}(y_{k})} \triangle z^{*} \Big( x_{k} + \frac{1}{\alpha _{k}}y_{k} \Big) - \triangle z^{*} \Big( \sum _{\alpha _{k} > 0} \frac{\alpha _{k}^{2}}{y_{k}^{*}(y_{k})}\Big( x_{k} + \frac{1}{\alpha _{k}}y_{k} \Big) \Big) \\
 & = & 0,
\end{eqnarray*}
and
\begin{equation} \label{frech14}
\Big( \sum _{k \in \mathbb{N}} \triangle \alpha _{k}^{2} \Big) ^{1/2} \leq C \Vert \triangle z^{*} \Vert ,
\end{equation}
as (by (\ref{frech7}), (\ref{frech9a}) and (\ref{frech11}))
\begin{eqnarray*}
(1 - \Vert x^{*} \Vert ) \Big( \sum _{k \in \mathbb{N}} \triangle \alpha _{k}^{2} \Big) ^{1/2} & \leq & \Big[ \sum _{\alpha _{k} > 0} \Big( \alpha _{k} \triangle z^{*} \Big( x_{k} + \frac{1}{\alpha _{k}}y_{k} - z \Big) \Big) ^{2} \Big] ^{1/2} \\
 & \leq & \Big[ \sum _{\alpha _{k} > 0} \Big( \alpha _{k} \triangle z^{*} (x_{k} - z) \Big) ^{2} \Big] ^{1/2} + \Big[ \sum _{\alpha _{k} > 0} \Big( \triangle y_{k}^{*} (y_{k}) \Big) ^{2} \Big] ^{1/2} \\
 & \leq & \Big[ \sum _{\alpha _{k} > 0} \Big( \alpha _{k} \Vert \triangle z^{*} \Vert ( \Vert x_{k} \Vert + \Vert z \Vert ) \Big) ^{2} \Big] ^{1/2} \\
 & & \quad + \Big[ \sum _{\alpha _{k} > 0} \Big( \Vert \triangle y_{k}^{*} \Vert \Vert x_{k} + y_{k} \Vert \Big) ^{2} \Big] ^{1/2} \\
 & \leq & \Vert \triangle z^{*} \Vert (1 + \Vert z \Vert ) \Big( \sum _{k \in \mathbb{N}} \alpha _{k}^{2} \Big) ^{1/2} + \Big( \sum _{k \in \mathbb{N}} \Vert \triangle y_{k}^{*} \Vert ^{2} \Big) ^{1/2} \\
 & \leq & (3 + \Vert z \Vert ) \Vert \triangle z^{*} \Vert \\
 & \leq & \frac{C}{4}(1 - \Vert x^{*} \Vert ) \Vert \triangle z^{*} \Vert .
\end{eqnarray*}
Also,
\begin{equation} \label{frech15}
\vert \triangle \alpha _{k} \vert \leq \frac{1}{2} \cdot \frac{\alpha _{k} \Vert \triangle z^{*} \Vert }{\delta _{0}^{1/3}} \quad \textrm{and} \quad \triangle \alpha _{k}^{2} \leq \frac{1}{4} \alpha _{k}^{2} \cdot \varepsilon \Vert \triangle z^{*} \Vert \quad \textrm{for $ k \in S $,}
\end{equation}
since (by (\ref{frech9f}) and (\ref{frech9h}))
$$ \delta _{0}^{1/3} \vert \triangle \alpha _{k} \vert \leq \delta _{0}^{1/3} \cdot \frac{\alpha _{k}}{y_{k}^{*}(y_{k})} \Vert \triangle z^{*} \Vert \Big\Vert x_{k} + \frac{1}{\alpha _{k}}y_{k} - z \Big\Vert \leq \frac{1}{2} \alpha _{k} \Vert \triangle z^{*} \Vert , $$
$$ \triangle \alpha _{k}^{2} \leq \frac{1}{4} \cdot \frac{\alpha _{k}^{2} \Vert \triangle z^{*} \Vert ^{2}}{\delta _{0}^{2/3}} \leq \frac{1}{4} \alpha _{k}^{2} \cdot \delta _{0}^{1/3} \Vert \triangle z^{*} \Vert \leq \frac{1}{4} \alpha _{k}^{2} \cdot \varepsilon \Vert \triangle z^{*} \Vert . $$

We further define
\begin{equation} \label{frech16}
\left\{ \parbox{11cm}{
$$ \beta _{k} = \alpha _{k} + \triangle \alpha _{k} \quad \textrm{when $ k \in S $,} $$
$$ \beta _{k} = \alpha _{k} + C \alpha _{k} \Vert \triangle z^{*} \Vert + C \Vert \triangle y_{k}^{*} \Vert \quad \textrm{when $ k \notin S $.} $$
}
\right.
\end{equation}
Let us realize that
\begin{equation} \label{frech17}
\beta _{k} \geq \alpha _{k}/2 , \quad k \in \mathbb{N}.
\end{equation}
This is clear when $ k \notin S $. For $ k \in S $, we use (\ref{frech15}) and write
$$ \alpha _{k} - \beta _{k} = - \triangle \alpha _{k} \leq \frac{1}{2} \cdot \frac{\alpha _{k} \Vert \triangle z^{*} \Vert }{\delta _{0}^{1/3}} \leq \frac{1}{2} \cdot \frac{\alpha _{k} \delta _{0}}{\delta _{0}^{1/3}} \leq \frac{1}{2} \alpha _{k}. $$

It follows from the computations (we use (\ref{frech3}) and (\ref{frech13}))
\begin{eqnarray*}
\sum _{k \in \mathbb{N}} \beta _{k}^{2} & = & \sum _{k \in \mathbb{N}} \beta _{k}^{2} - 2 \sum _{k \in \mathbb{N}} \alpha _{k} \triangle \alpha _{k} \\
 & = & \sum _{k \in S} (\alpha _{k} + \triangle \alpha _{k})^{2} - 2 \sum _{k \in \mathbb{N}} \alpha _{k} \triangle \alpha _{k} + \sum _{k \notin S} \big( \alpha _{k} + C \alpha _{k} \Vert \triangle z^{*} \Vert + C \Vert \triangle y_{k}^{*} \Vert \big) ^{2} \\
 & = & \sum _{k \in S} \alpha _{k}^{2} + 2 \sum _{k \in S} \alpha _{k} \triangle \alpha _{k} + \sum _{k \in S} \triangle \alpha _{k}^{2} - 2 \sum _{k \in \mathbb{N}} \alpha _{k} \triangle \alpha _{k} \\
 & & \quad + \sum _{k \notin S} \alpha _{k}^{2} + \sum _{k \notin S} \Big( \big( \alpha _{k} + C \alpha _{k} \Vert \triangle z^{*} \Vert + C \Vert \triangle y_{k}^{*} \Vert \big) ^{2} - \alpha _{k}^{2} \Big) \\
 & = & 1 - 2 \sum _{k \notin S} \alpha _{k} \triangle \alpha _{k} + \sum _{k \in S} \triangle \alpha _{k}^{2} + \sum _{k \notin S} \Big( C^{2} \alpha _{k}^{2} \Vert \triangle z^{*} \Vert ^{2} + C^{2}\Vert \triangle y_{k}^{*} \Vert ^{2} \\
 & & \quad + 2C \alpha _{k}^{2} \Vert \triangle z^{*} \Vert + 2C \alpha _{k} \Vert \triangle y_{k}^{*} \Vert (1 + C \Vert \triangle z^{*} \Vert ) \Big)
\end{eqnarray*}
and (we use (\ref{frech9d}), (\ref{frech11}) and (\ref{frech14}))
\begin{eqnarray*}
\Big\vert \Big[ \sum _{k \in \mathbb{N}} \beta _{k}^{2} \Big] ^{1/2} - 1 \Big\vert & \leq & \Big\vert \Big[ \sum _{k \in \mathbb{N}} \beta _{k}^{2} \Big] - 1 \Big\vert \\
 & \leq & 2 \Big( \sum _{k \notin S} \alpha _{k}^{2} \Big) ^{1/2} \Big( \sum _{k \notin S} \triangle \alpha _{k}^{2} \Big) ^{1/2} + \sum _{k \in S} \triangle \alpha _{k}^{2} + C^{2} \Vert \triangle z^{*} \Vert ^{2} \Big( \sum _{k \notin S} \alpha _{k}^{2} \Big) \\
 & & \quad + C^{2} \Big( \sum _{k \notin S} \Vert \triangle y_{k}^{*} \Vert ^{2} \Big) + 2C \Vert \triangle z^{*} \Vert \Big( \sum _{k \notin S} \alpha _{k}^{2} \Big) \\
 & & \quad + 2C (1 + C \Vert \triangle z^{*} \Vert ) \Big( \sum _{k \notin S} \alpha _{k}^{2} \Big) ^{1/2} \Big( \sum _{k \notin S} \Vert \triangle y_{k}^{*} \Vert ^{2} \Big) ^{1/2} \\
 & \leq & 2 \delta _{00} \cdot C \Vert \triangle z^{*} \Vert + (C \Vert \triangle z^{*} \Vert )^{2} + C^{2} \Vert \triangle z^{*} \Vert ^{2} \cdot \delta _{00}^{2} \\
 & & \quad + C^{2} \cdot 4 \Vert \triangle z^{*} \Vert ^{2} + 2C \Vert \triangle z^{*} \Vert \cdot \delta _{00}^{2} \\
 & & \quad + 2C (1 + C \Vert \triangle z^{*} \Vert ) \cdot \delta _{00} \cdot 2 \Vert \triangle z^{*} \Vert \\
 & \leq & \big[ 2C + C^{2} + C^{2} + 4C^{2} + 2C + 4C(1 + C) \big] \cdot \delta _{00} \cdot \Vert \triangle z^{*} \Vert
\end{eqnarray*}
that (we use (\ref{frech9c}))
\begin{equation} \label{frech18}
\Big\vert \Big[ \sum _{k \in \mathbb{N}} \beta _{k}^{2} \Big] ^{1/2} - 1 \Big\vert \leq \varepsilon \Vert \triangle z^{*} \Vert .
\end{equation}
In some situations, we apply this in the weakened form
\begin{equation} \label{frech19}
\Big[ \sum _{k \in \mathbb{N}} \beta _{k}^{2} \Big] ^{1/2} \leq 2.
\end{equation}

To finish the proof of the lemma, we need the following claim.

\begin{claim} \label{clfrech}
For each $ k \in \mathbb{N} $ with $ \beta _{k} > 0 $, we have
$$ \Big\Vert x^{*} + \triangle x^{*} + \Big[ \sum _{j \in \mathbb{N}} \beta _{j}^{2} \Big] ^{1/2} \Big( \frac{\alpha _{k}}{\beta _{k}} y_{k}^{*} + \frac{1}{\beta _{k}} \triangle y_{k}^{*} \Big) \Big\Vert \leq 1 + \triangle z^{*} (z) + 12 \varepsilon \Vert \triangle z^{*} \Vert . $$
\end{claim}

\begin{proof}
We consider two cases $ k \in S $ and $ k \notin S $.

I. Let $ k \in S $. Let us show first that
\begin{equation} \label{frech20}
\Big\vert \Big[ \sum _{j \in \mathbb{N}} \beta _{j}^{2} \Big] ^{1/2} \frac{\alpha _{k}}{\beta _{k}} - 1 + \frac{\triangle \alpha _{k}}{\alpha _{k}} \Big\vert \leq \frac{5}{2} \varepsilon \Vert \triangle z^{*} \Vert ,
\end{equation}
\begin{equation} \label{frech21}
\Big\vert \Big[ \sum _{j \in \mathbb{N}} \beta _{j}^{2} \Big] ^{1/2} \frac{\alpha _{k}}{\beta _{k}} - 1 \Big\vert \leq \frac{3}{\delta _{0}^{1/3}} \Vert \triangle z^{*} \Vert .
\end{equation}
We verify (\ref{frech20}) by the computation (using (\ref{frech15}), (\ref{frech16}), (\ref{frech17}) and (\ref{frech18}))
\begin{eqnarray*}
\Big\vert \Big[ \sum _{j \in \mathbb{N}} \beta _{j}^{2} \Big] ^{1/2} \frac{\alpha _{k}}{\beta _{k}} - 1 + \frac{\triangle \alpha _{k}}{\alpha _{k}} \Big\vert & \leq & \frac{\alpha _{k}}{\beta _{k}} \Big\vert \Big[ \sum _{j \in \mathbb{N}} \beta _{j}^{2} \Big] ^{1/2} - 1 \Big\vert + \Big\vert \frac{\alpha _{k}}{\beta _{k}} - 1 + \frac{\triangle \alpha _{k}}{\alpha _{k}} \Big\vert \\
 & \leq & \frac{\alpha _{k}}{\beta _{k}} \varepsilon \Vert \triangle z^{*} \Vert + \Big\vert \frac{\alpha _{k}}{\alpha _{k} + \triangle \alpha _{k}} - 1 + \frac{\triangle \alpha _{k}}{\alpha _{k}} \Big\vert \\
 & = & \frac{\alpha _{k}}{\beta _{k}} \varepsilon \Vert \triangle z^{*} \Vert + \Big\vert - \frac{\triangle \alpha _{k}}{\alpha _{k} + \triangle \alpha _{k}} + \frac{\triangle \alpha _{k}}{\alpha _{k}} \Big\vert \\
 & = & \frac{\alpha _{k}}{\beta _{k}} \varepsilon \Vert \triangle z^{*} \Vert + \frac{\triangle \alpha _{k}^{2}}{\alpha _{k} \beta _{k}} \\
 & \leq & 2 \varepsilon \Vert \triangle z^{*} \Vert + 2 \cdot \frac{\triangle \alpha _{k}^{2}}{\alpha _{k}^{2}} \\
 & \leq & 2 \varepsilon \Vert \triangle z^{*} \Vert + 2 \cdot \frac{1}{4} \varepsilon \Vert \triangle z^{*} \Vert
\end{eqnarray*}
and (\ref{frech21}) by the computation (using (\ref{frech15}))
\begin{eqnarray*}
\Big\vert \Big[ \sum _{j \in \mathbb{N}} \beta _{j}^{2} \Big] ^{1/2} \frac{\alpha _{k}}{\beta _{k}} - 1 \Big\vert & \leq & \Big\vert \Big[ \sum _{j \in \mathbb{N}} \beta _{j}^{2} \Big] ^{1/2} \frac{\alpha _{k}}{\beta _{k}} - 1 + \frac{\triangle \alpha _{k}}{\alpha _{k}} \Big\vert + \Big\vert - \frac{\triangle \alpha _{k}}{\alpha _{k}} \Big\vert \\
 & \leq & \frac{5}{2} \cdot \varepsilon \Vert \triangle z^{*} \Vert + \frac{1}{2} \cdot \frac{\Vert \triangle z^{*} \Vert }{\delta _{0}^{1/3}} \\
 & \leq & \frac{5}{2} \cdot \frac{\Vert \triangle z^{*} \Vert }{\delta _{0}^{1/3}} + \frac{1}{2} \cdot \frac{\Vert \triangle z^{*} \Vert }{\delta _{0}^{1/3}}.
\end{eqnarray*}

Further,
\begin{equation} \label{frech22}
\Big\Vert \triangle x^{*} + \Big[ \sum _{j \in \mathbb{N}} \beta _{j}^{2} \Big] ^{1/2} \Big( \frac{\alpha _{k}}{\beta _{k}} y_{k}^{*} + \frac{1}{\beta _{k}} \triangle y_{k}^{*} \Big) - y_{k}^{*} \Big\Vert \leq \frac{15}{\delta _{0}^{1/3}} \Vert \triangle z^{*} \Vert ,
\end{equation}
as (by (\ref{frech9g}), (\ref{frech11}), (\ref{frech17}), (\ref{frech19}), (\ref{frech21}) and $ \Vert y_{k}^{*} \Vert \leq \Vert x^{*} + y_{k}^{*} \Vert + \Vert -x^{*} \Vert \leq 2 $)
\begin{align*}
\Big\Vert \triangle x^{*} + & \Big[ \sum _{j \in \mathbb{N}} \beta _{j}^{2} \Big] ^{1/2} \Big( \frac{\alpha _{k}}{\beta _{k}} y_{k}^{*} + \frac{1}{\beta _{k}} \triangle y_{k}^{*} \Big) - y_{k}^{*} \Big\Vert \\
 & \leq \Vert \triangle x^{*} \Vert + \Big\vert \Big[ \sum _{j \in \mathbb{N}} \beta _{j}^{2} \Big] ^{1/2} \frac{\alpha _{k}}{\beta _{k}} - 1 \Big\vert \Vert y_{k}^{*} \Vert + \Big[ \sum _{j \in \mathbb{N}} \beta _{j}^{2} \Big] ^{1/2} \frac{1}{\beta _{k}} \Vert \triangle y_{k}^{*} \Vert \\
 & \leq \Vert \triangle z^{*} \Vert + \frac{3}{\delta _{0}^{1/3}} \Vert \triangle z^{*} \Vert \cdot 2 + 2 \cdot \frac{2}{\alpha _{k}} \cdot 2\Vert \triangle z^{*} \Vert \\
 & \leq \frac{1}{\delta _{0}^{1/3}} \Vert \triangle z^{*} \Vert + \frac{3}{\delta _{0}^{1/3}} \Vert \triangle z^{*} \Vert \cdot 2 + \frac{8}{\delta _{0}^{1/3}} \Vert \triangle z^{*} \Vert .
\end{align*}
It follows from (\ref{frech9i}) and (\ref{frech22}) that
\begin{align*}
\Big\Vert x^{*} & + \triangle x^{*} + \Big[ \sum _{j \in \mathbb{N}} \beta _{j}^{2} \Big] ^{1/2} \Big( \frac{\alpha _{k}}{\beta _{k}} y_{k}^{*} + \frac{1}{\beta _{k}} \triangle y_{k}^{*} \Big) \Big\Vert \\
 & \leq 1 + \triangle x^{*}(x_{k}) + \Big[ \Big[ \sum _{j \in \mathbb{N}} \beta _{j}^{2} \Big] ^{1/2} \Big( \frac{\alpha _{k}}{\beta _{k}} y_{k}^{*} + \frac{1}{\beta _{k}} \triangle y_{k}^{*} \Big) - y_{k}^{*} \Big] (y_{k}) + \varepsilon \Vert \triangle z^{*} \Vert .
\end{align*}
Hence, we can compute (using (\ref{frech9g}), (\ref{frech9h}), (\ref{frech11}), (\ref{frech23}), (\ref{frech20}), (\ref{frech21}) and $ \Vert y_{k}^{*} \Vert \leq \Vert x^{*} + y_{k}^{*} \Vert + \Vert -x^{*} \Vert \leq 2 $)
\begin{align*}
\Big\Vert x^{*} & + \triangle x^{*} + \Big[ \sum _{j \in \mathbb{N}} \beta _{j}^{2} \Big] ^{1/2} \Big( \frac{\alpha _{k}}{\beta _{k}} y_{k}^{*} + \frac{1}{\beta _{k}} \triangle y_{k}^{*} \Big) \Big\Vert \\
 & \leq 1 + \triangle x^{*}(x_{k}) - \frac{\triangle \alpha _{k}}{\alpha _{k}} y_{k}^{*}(y_{k}) + \frac{1}{\alpha _{k}} \triangle y_{k}^{*}(y_{k}) + \varepsilon \Vert \triangle z^{*} \Vert \\
 & \quad + \Big( \Big[ \sum _{j \in \mathbb{N}} \beta _{j}^{2} \Big] ^{1/2} \frac{\alpha _{k}}{\beta _{k}} - 1 + \frac{\triangle \alpha _{k}}{\alpha _{k}} \Big) y_{k}^{*}(y_{k}) + \frac{1}{\alpha _{k}} \Big( \Big[ \sum _{j \in \mathbb{N}} \beta _{j}^{2} \Big] ^{1/2} \frac{\alpha _{k}}{\beta _{k}} - 1 \Big) \triangle y_{k}^{*} (y_{k}) \\
 & \leq 1 + \triangle z^{*} (z) + \varepsilon \Vert \triangle z^{*} \Vert \\
 & \quad + \frac{5}{2} \varepsilon \Vert \triangle z^{*} \Vert \Vert y_{k}^{*} \Vert \Vert x_{k} + y_{k} \Vert + \frac{1}{\alpha _{k}} \cdot \frac{3}{\delta _{0}^{1/3}} \Vert \triangle z^{*} \Vert \Vert \triangle y_{k}^{*} \Vert \Vert x_{k} + y_{k} \Vert \\
 & \leq 1 + \triangle z^{*} (z) + \varepsilon \Vert \triangle z^{*} \Vert + \frac{5}{2} \varepsilon \Vert \triangle z^{*} \Vert \cdot 2 + \frac{1}{\alpha _{k}} \cdot \frac{3}{\delta _{0}^{1/3}} \Vert \triangle z^{*} \Vert \cdot 2 \Vert \triangle z^{*} \Vert \\
 & \leq 1 + \triangle z^{*} (z) + 6 \varepsilon \Vert \triangle z^{*} \Vert + \frac{6}{\alpha _{k}} \cdot \delta _{0}^{1/3} \cdot \delta _{0}^{1/3} \cdot \Vert \triangle z^{*} \Vert \\
 & \leq 1 + \triangle z^{*} (z) + 6 \varepsilon \Vert \triangle z^{*} \Vert + \frac{6}{\alpha _{k}} \cdot \alpha _{k} \cdot \varepsilon \cdot \Vert \triangle z^{*} \Vert \\
 & = 1 + \triangle z^{*} (z) + 12 \varepsilon \Vert \triangle z^{*} \Vert ,
\end{align*}
and the desired inequality is proved.

II. Let $ k \notin S $. Let us show first that
\begin{equation} \label{frech24}
\left\{ \hspace{-1.5cm} \parbox{11cm}{
\begin{eqnarray*}
 & & \Big\Vert x^{*} + \triangle x^{*} + \Big[ \sum _{j \in \mathbb{N}} \beta _{j}^{2} \Big] ^{1/2} \Big( \frac{\alpha _{k}}{\beta _{k}} y_{k}^{*} + \frac{1}{\beta _{k}} \triangle y_{k}^{*} \Big) \Big\Vert \\
 & & \quad \quad \leq 1 - (1 - \Vert x^{*} \Vert ) \Big( 1 - \frac{\alpha _{k}(1 + (C/2) \Vert \triangle z^{*} \Vert )}{\beta _{k}} \Big) + \Vert \triangle z^{*} \Vert + \frac{2}{\beta _{k}} \Vert \triangle y_{k}^{*} \Vert . \hspace{-1.5cm}
\end{eqnarray*}
}
\right.
\end{equation}
By (\ref{frech16}) and (\ref{frech18}), we have
$$ \Big[ \sum _{j \in \mathbb{N}} \beta _{j}^{2} \Big] ^{1/2} \frac{\alpha _{k}}{\beta _{k}} \leq \frac{\alpha _{k}(1 + \varepsilon \Vert \triangle z^{*} \Vert )}{\beta _{k}} = \frac{\alpha _{k}(1 + \varepsilon \Vert \triangle z^{*} \Vert )}{\alpha _{k} + C \alpha _{k} \Vert \triangle z^{*} \Vert + C \Vert \triangle y_{k}^{*} \Vert } \leq 1, $$
and thus we can compute (using (\ref{frech5}))
\begin{eqnarray*}
\Big\Vert x^{*} + \Big[ \sum _{j \in \mathbb{N}} \beta _{j}^{2} \Big] ^{1/2} \frac{\alpha _{k}}{\beta _{k}} y_{k}^{*} \Big\Vert & \leq & 1 - (1 - \Vert x^{*} \Vert ) \Big( 1 - \Big[ \sum _{j \in \mathbb{N}} \beta _{j}^{2} \Big] ^{1/2} \frac{\alpha _{k}}{\beta _{k}} \Big) \\
 & \leq & 1 - (1 - \Vert x^{*} \Vert ) \Big( 1 - \frac{\alpha _{k}(1 + \varepsilon \Vert \triangle z^{*} \Vert )}{\beta _{k}} \Big) \\
 & \leq & 1 - (1 - \Vert x^{*} \Vert ) \Big( 1 - \frac{\alpha _{k}(1 + (C/2) \Vert \triangle z^{*} \Vert )}{\beta _{k}} \Big) .
\end{eqnarray*}
Now, to prove (\ref{frech24}), it is sufficient to use the triangle inequality and (\ref{frech19}).

Further, it is clear that
$$ C \Vert \triangle z^{*} \Vert \leq 1, $$
as $ C \Vert \triangle z^{*} \Vert \leq C \delta _{00} \leq \varepsilon \leq 1 $, and
$$ \beta _{k} \Vert \triangle z^{*} \Vert \leq 2 \alpha _{k} \Vert \triangle z^{*} \Vert + \Vert \triangle y_{k}^{*} \Vert , $$
as $ \beta _{k} \Vert \triangle z^{*} \Vert = \alpha _{k} \Vert \triangle z^{*} \Vert + (C \Vert \triangle z^{*} \Vert )(\alpha _{k} \Vert \triangle z^{*} \Vert + \Vert \triangle y_{k}^{*} \Vert ) \leq \alpha _{k} \Vert \triangle z^{*} \Vert + \alpha _{k} \Vert \triangle z^{*} \Vert + \Vert \triangle y_{k}^{*} \Vert $. Hence, we can compute (using (\ref{frech9a}))
\begin{eqnarray*}
\beta _{k} (1 + \Vert z \Vert ) \Vert \triangle z^{*} \Vert + 2 \Vert \triangle y_{k}^{*} \Vert & \leq & (1 + \Vert z \Vert )(2 \alpha _{k} \Vert \triangle z^{*} \Vert + \Vert \triangle y_{k}^{*} \Vert ) + 2 \Vert \triangle y_{k}^{*} \Vert \\
 & = & (1 + \Vert z \Vert ) \cdot 2 \alpha _{k} \Vert \triangle z^{*} \Vert + (3 + \Vert z \Vert ) \Vert \triangle y_{k}^{*} \Vert \\
 & \leq & (1 - \Vert x^{*} \Vert ) [ (C/2) \alpha _{k} \Vert \triangle z^{*} \Vert + C \Vert \triangle y_{k}^{*} \Vert ] \\
 & = & (1 - \Vert x^{*} \Vert ) [ \beta _{k} - \alpha _{k} (1 + (C/2) \Vert \triangle z^{*} \Vert ) ]
\end{eqnarray*}
and obtain
\begin{equation} \label{frech25}
(1 + \Vert z \Vert ) \Vert \triangle z^{*} \Vert + \frac{2}{\beta _{k}} \Vert \triangle y_{k}^{*} \Vert \leq (1 - \Vert x^{*} \Vert ) \Big( 1 - \frac{\alpha _{k}(1 + (C/2) \Vert \triangle z^{*} \Vert )}{\beta _{k}} \Big) .
\end{equation}

Finally, combining (\ref{frech24}) and (\ref{frech25}), we write
\begin{align*}
\Big\Vert x^{*} + \triangle x^{*} & + \Big[ \sum _{j \in \mathbb{N}} \beta _{j}^{2} \Big] ^{1/2} \Big( \frac{\alpha _{k}}{\beta _{k}} y_{k}^{*} + \frac{1}{\beta _{k}} \triangle y_{k}^{*} \Big) \Big\Vert \\
 & \leq 1 - (1 - \Vert x^{*} \Vert ) \Big( 1 - \frac{\alpha _{k}(1 + (C/2) \Vert \triangle z^{*} \Vert )}{\beta _{k}} \Big) + \Vert \triangle z^{*} \Vert + \frac{2}{\beta _{k}} \Vert \triangle y_{k}^{*} \Vert \\
 & \leq 1 - \Vert z \Vert \Vert \triangle z^{*} \Vert \\
 & \leq 1 + \triangle z^{*} (z) + 12 \varepsilon \Vert \triangle z^{*} \Vert .
\end{align*}
The claim is proved.
\end{proof}

We are going to finish the proof of Lemma \ref{lemfrech}. We put
$$ \gamma _{k} = \Big[ \sum _{j \in \mathbb{N}} \beta _{j}^{2} \Big] ^{-1/2} \beta _{k}, \quad k \in \mathbb{N}. $$
By (\ref{frech9e}), (\ref{frech16}) and (\ref{frech17}), the series of implications
$$ \gamma _{k} = 0 \quad \Rightarrow \quad \beta _{k} = 0 \quad \Rightarrow \quad \alpha _{k} = 0 \quad \Rightarrow \quad k \notin S $$
and
$$ \gamma _{k} = 0 \quad \Rightarrow \quad 0 = \beta _{k} = \alpha _{k} + C \alpha _{k} \Vert \triangle z^{*} \Vert + C \Vert \triangle y_{k}^{*} \Vert \quad \Rightarrow \quad \triangle y_{k}^{*} = 0 $$
hold. Consequently,
$$ \gamma _{k} = 0 \quad \Rightarrow \quad \alpha _{k} y_{k}^{*} + \triangle y_{k}^{*} = 0. $$
Therefore, using Lemma \ref{lembound} and Claim \ref{clfrech}, we can compute
\begin{eqnarray*}
\Vert z^{*} + \triangle z^{*} \Vert & = & \Big\Vert x^{*} + \sum _{k \in \mathbb{N}} \alpha _{k} y_{k}^{*} + \triangle x^{*} + \sum _{k \in \mathbb{N}} \triangle y_{k}^{*} \Big\Vert \\
 & = & \Big\Vert x^{*} + \triangle x^{*} + \sum _{\gamma _{k} > 0} \gamma _{k} \Big( \frac{\alpha _{k}}{\gamma _{k}} y_{k}^{*} + \frac{1}{\gamma _{k}} \triangle y_{k}^{*} \Big) \Big\Vert \\
 & \leq & \sup _{\gamma _{k} > 0} \Big\Vert x^{*} + \triangle x^{*} + \Big( \frac{\alpha _{k}}{\gamma _{k}} y_{k}^{*} + \frac{1}{\gamma _{k}} \triangle y_{k}^{*} \Big) \Big\Vert \\
 & = & \sup _{\beta _{k} > 0} \Big\Vert x^{*} + \triangle x^{*} + \Big[ \sum _{j \in \mathbb{N}} \beta _{j}^{2} \Big] ^{1/2} \Big( \frac{\alpha _{k}}{\beta _{k}} y_{k}^{*} + \frac{1}{\beta _{k}} \triangle y_{k}^{*} \Big) \Big\Vert \\
 & \leq & 1 + \triangle z^{*} (z) + 12 \varepsilon \Vert \triangle z^{*} \Vert ,
\end{eqnarray*}
and ($ * $) is proved.
\end{proof}

\begin{proposition} \label{propfrech}
Let the dual of $ X \oplus Y_{k} $ be Fr\'echet smooth for every $ k \in \mathbb{N} $. If, moreover, there is a constant $ c > 0 $ such that
$$ \Vert x + y_{k} \Vert _{X \oplus Y_{k}} \geq \Vert x \Vert _{X} + c \Vert y_{k} \Vert _{Y_{k}}, \quad \quad k \in \mathbb{N}, x \in X, y_{k} \in Y_{k}, $$
then the dual of $ \Sigma (X \oplus Y_{k}) $ is Fr\'echet smooth.
\end{proposition}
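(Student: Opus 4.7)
The plan is to apply Lemma \ref{lemlambda} to $\Lambda = \Lambda(X \oplus Y_k)$, which is $w^*$-compact (Lemma \ref{lemcomp}) and whose $w^*$-closed convex hull equals $B_{(\Sigma(X \oplus Y_k))^*}$ (as noted just before Lemma \ref{lembound}). This reduces the task to establishing Fr\'echet differentiability of the dual norm at every $z^* = x^* + \sum_{k} \alpha_k y_k^* \in \Lambda \cap S_{(\Sigma(X \oplus Y_k))^*}$. When $\|x^*\|_X < 1$, Lemma \ref{lemfrech} handles this directly, so the substance lies in the remaining case $\|x^*\|_X = 1$.

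For $\|x^*\|_X = 1$, I plan to invoke Lemma \ref{lempart} with the decomposition $\Sigma(X \oplus Y_k) = X \oplus Y$, where $Y$ is the closure in $\Sigma$ of $\mathrm{span}(\bigcup_k Y_k)$. The conditions $\|x^*\|_\Sigma = 1 = \|z^*\|_\Sigma$ follow from Proposition \ref{propstab} and the choice of $z^*$. The partial Fr\'echet differential $\partial/\partial x^*$ at $x^*$ exists because Fr\'echet smoothness of $(X \oplus Y_1)^*$ at $x^*$ restricts to Fr\'echet smoothness of $X^*$ at $x^*$ (their dual norms agreeing on $X^*$ by Proposition \ref{propstab}), yielding the derivative $w \in X$ with $\|w\|_X = 1 = x^*(w)$. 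Under the gap hypothesis, the Fr\'echet derivative of every $(X \oplus Y_k)^*$ at $x^*$ is forced to lie in $X$, so $w$ is common across all $k$.

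The crucial step is verifying that the partial Fr\'echet differential $\partial/\partial \tilde{y}^*$ at $z^*$ vanishes. Using Lemma \ref{lemmod} with $X_1 = X$ and $X_2 = \{0\}$, the gap lifts to $\|x + \tilde{y}\|_\Sigma \geq \|x\|_X + c\|\tilde{y}\|_\Sigma$ for $x \in X$ and $\tilde{y} \in Y$. For a perturbation $\tilde{h}^* = \sum_k \gamma_k$ with $\gamma_k \in Y_k^*$, I write $z^* + \tilde{h}^* = x^* + \sum_k (\alpha_k y_k^* + \gamma_k)$ and construct coefficients $\beta_k \geq 0$ with $\sum \beta_k^2 \leq 1$ together with functionals $\tilde{y}_k^* \in Y_k^*$ satisfying $\beta_k \tilde{y}_k^* = \alpha_k y_k^* + \gamma_k$. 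Lemma \ref{lembound} then bounds $\|z^* + \tilde{h}^*\|_\Sigma \leq \sup_k \|x^* + \tilde{y}_k^*\|_{X \oplus Y_k}$, and Fr\'echet smoothness of each $(X \oplus Y_k)^*$ at $x^* + y_k^*$---combined with the vanishing of first-order $Y_k^*$-contributions (since the derivative lies in $X$ when $\|y_k^*\|_{Y_k} < c$)---produces the required $o(\|\tilde{h}^*\|_{Y^*})$ estimate.

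The main obstacle lies in constructing the $\beta_k$'s under the tight constraint $\sum \beta_k^2 \leq 1$ (potentially violated when $\sum \alpha_k^2$ is already near $1$), in handling indices where $\alpha_k = 0$ but $\gamma_k \neq 0$, and in controlling the ``bad'' indices where $\|y_k^*\|_{Y_k} \geq c$ and the Fr\'echet derivative of $(X \oplus Y_k)^*$ at $x^* + y_k^*$ may have a nonzero $Y_k$-component. I plan to resolve these via a truncation-and-rescaling argument modeled on Lemma \ref{lemfrech}, exploiting both the smallness $\sum_k \|\gamma_k\|_{Y_k}^2 \lesssim \|\tilde{h}^*\|_\Sigma^2$ and the a priori estimate $\sum_k \alpha_k^2 \|y_k^*\|_{Y_k}^2 \leq 1$ (both from Lemma \ref{lemequiv}(B)) to control the contribution of ``bad'' indices.
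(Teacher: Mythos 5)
Your reduction steps coincide with the paper's: Lemma \ref{lemcomp} and Lemma \ref{lemlambda} reduce the problem to points of $\Lambda(X\oplus Y_k)\cap S$, Lemma \ref{lemfrech} covers $\Vert x^*\Vert_X<1$, and Lemma \ref{lempart} reduces the remaining case $\Vert x^*\Vert_X=1$ to showing that the partial differential in the $Y$-direction vanishes at $z^*$, using Lemma \ref{lembound} for the final estimate. Up to this point your plan is sound.

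The gap is in the last step, and it is precisely the place where all the work of the proposition lives. You propose to control $\sup_k \Vert x^* + \tilde y_k^*\Vert_{X\oplus Y_k}$ via Fr\'echet smoothness of $(X\oplus Y_k)^*$ at $x^*+y_k^*$ for \emph{all} $k$. This does not go through: the $o(\Vert h^*\Vert)$ remainder in the Fr\'echet expansion is not uniform over $k$, and for indices with small $\alpha_k$ the point $\tilde y_k^*=(\alpha_k y_k^*+\gamma_k)/\beta_k$ is not a small perturbation of $y_k^*$, so the local expansion is not even applicable there. You identify these as obstacles but the promised ``truncation-and-rescaling modeled on Lemma \ref{lemfrech}'' does not resolve them and is not what the paper does. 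The paper's proof uses Fr\'echet smoothness at exactly \emph{one} index --- any $k_0$ with $\alpha_{k_0}>0$ (say $k_0=1$), for which $\Vert x^*+y_1^*\Vert=1$ and the gap hypothesis forces the Fr\'echet derivative of $\Vert\cdot\Vert_{(X\oplus Y_1)^*}$ at $x^*+y_1^*$ to lie in $X$, yielding $\Vert x^*+y_1^*+h^*\Vert = 1+o(\Vert h^*\Vert)$ for $h^*\in Y_1^*$. For every other index $k\ge 2$, the decisive observation is (\ref{frech32}): $\Vert x^*+a^*\Vert_{X\oplus Y_k}=1$ whenever $a^*\in Y_k^*$ with $\Vert a^*\Vert\le c$, which combined with a convex combination decomposition gives the \emph{exact} bound
$$ \Big\Vert x^* + \frac{1}{\alpha_k + \Vert \triangle y_k^*\Vert/c}\big(\alpha_k y_k^* + \triangle y_k^*\big)\Big\Vert_{X\oplus Y_k}\le 1 $$
with no error term at all. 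Defining $\beta_k=\alpha_k+\Vert\triangle y_k^*\Vert/c$ for $k\ge 2$ and $\beta_1$ so that $\sum_k\beta_k^2=\sum_k\alpha_k^2\;(\le 1)$ kills both of your remaining concerns ($\sum\beta_k^2\le 1$ and $\alpha_k=0$ but $\gamma_k\neq 0$) in one stroke; the index-$1$ perturbation is then small in $\Vert\triangle z^*\Vert$, so (\ref{frech29b}) closes the argument. So your proposal is not a complete proof; the one piece that carries the load is replaced by an idea that is not quantitatively controllable as stated.
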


\begin{proof}
In the same way as in the proof of Lemma \ref{lemfrech}, we write $ \Vert \cdot \Vert $ instead of $ \Vert \cdot \Vert _{\Sigma }, \Vert \cdot \Vert _{X} $ and $ \Vert \cdot \Vert _{X \oplus Y_{k}} $. We note again that all the considered spaces are reflexive.

By Lemma \ref{lemcomp} and Lemma \ref{lemlambda}, it is sufficient to show that the dual norm is Fr\'echet differentiable at every $ z^{*} \in \Lambda (X \oplus Y_{k}) \cap S_{(\Sigma (X \oplus Y_{k}))^{*}} $. By Lemma \ref{lemfrech}, it remains to show that the dual norm is Fr\'echet differentiable at every $ z^{*} \in \Lambda (X \oplus Y_{k}) \cap S_{(\Sigma (X \oplus Y_{k}))^{*}}, z^{*} = x^{*} + \sum _{k \in \mathbb{N}} y_{k}^{*}, $ for which $ \Vert x^{*} \Vert = 1 $. By Lemma \ref{lempart}, it is sufficient to show that the partial Fr\'echet differential $ \partial / \partial (\sum _{k \in \mathbb{N}} y_{k}^{*}) $ of the dual norm equals to $ 0 $ at these functionals.

So, let $ z^{*} \in \Lambda (X \oplus Y_{k}) \cap S_{(\Sigma (X \oplus Y_{k}))^{*}} $ be expressed by
\begin{equation} \label{frech26}
z^{*} = x^{*} + \sum _{k \in \mathbb{N}} \alpha _{k} y_{k}^{*}
\end{equation}
where $ x^{*} \in X^{*}, y_{k}^{*} \in Y_{k}^{*}, \Vert x^{*} + y_{k}^{*} \Vert \leq 1, 0 \leq \alpha _{k} \leq 1, \sum _{k \in \mathbb{N}} \alpha _{k}^{2} \leq 1 $. Let moreover
\begin{equation} \label{frech27}
\Vert x^{*} \Vert = 1.
\end{equation}
We may assume that $ \alpha _{k} > 0 $ for some $ k \in \mathbb{N} $ (in the other case, we have $ z^{*} = x^{*} $, and thus $ z^{*} = x^{*} + 1 \cdot 0 + \sum _{k \geq 2} 0 \cdot 0 $ is another expression of $ z^{*} $ witnessing that $ z^{*} \in \Lambda (X \oplus Y_{k}) $). Without loss of generality, let
\begin{equation} \label{frech28a}
\alpha _{1} > 0.
\end{equation}
Let $ x $ be the partial Fr\'echet differential $ \partial / \partial x^{*} $ of the dual norm of $ \Vert \cdot \Vert _{X \oplus Y_{1}} $ at $ x^{*} $. We have $ (x^{*} + y_{1}^{*})(x) = x^{*}(x) = \Vert x^{*} \Vert = 1 = \Vert x^{*} + y_{1}^{*} \Vert $, and thus $ x $ is also the Fr\'echet differential of the dual norm of $ \Vert \cdot \Vert _{X \oplus Y_{1}} $ at $ x^{*} + y_{1}^{*} $. In particular,
\begin{equation} \label{frech28b}
\Vert x^{*} + y_{1}^{*} + h^{*} \Vert = 1 + o(\Vert h^{*} \Vert ), \quad \quad h^{*} \in Y_{1}^{*}.
\end{equation}

We have to prove that the partial Fr\'echet differential $ \partial / \partial (\sum _{k \in \mathbb{N}} y_{k}^{*}) $ of the dual norm equals to $ 0 $ at $ z^{*} $. For an $ \varepsilon > 0 $, we find a $ \delta > 0 $ such that
$$ \Vert \triangle z^{*} \Vert \leq \delta , \; \triangle z^{*} = \sum _{k \in \mathbb{N}} \triangle y_{k}^{*} \quad \Rightarrow \quad \Vert z^{*} + \triangle z^{*} \Vert \leq 1 + \varepsilon \Vert \triangle z^{*} \Vert . \leqno (**) $$
So, let $ \varepsilon > 0 $ be fixed. Choose $ \delta > 0 $ so that
\begin{equation} \label{frech29a}
\delta \leq \frac{3}{32} c^{2} \alpha _{1}^{2},
\end{equation}
\begin{equation} \label{frech29b}
h^{*} \in Y_{1}^{*} \; \& \; \Vert h^{*} \Vert \leq \frac{76}{3c^{2} \alpha _{1}^{2}} \cdot \delta \quad \Rightarrow \quad \Vert x^{*} + y_{1}^{*} + h^{*} \Vert \leq 1 + \Big[ \frac{76}{3c^{2} \alpha _{1}^{2}} \Big] ^{-1} \cdot \varepsilon \Vert h^{*} \Vert .
\end{equation}
To prove ($ ** $), choose
\begin{equation} \label{frech30}
\triangle z^{*} = \sum _{k \in \mathbb{N}} \triangle y_{k}^{*}, \quad \quad \Vert \triangle z^{*} \Vert \leq \delta ,
\end{equation}
where $ \triangle y_{k}^{*} \in Y_{k}^{*}, k \in \mathbb{N} $. It can be shown that
\begin{equation} \label{frech31}
\Big( \sum _{k \in \mathbb{N}} \Vert \triangle y_{k}^{*} \Vert ^{2} \Big) ^{1/2} \leq 2 \Vert \triangle z^{*} \Vert
\end{equation}
in the same way as (\ref{frech11}) in the proof of Lemma \ref{lemfrech}.

Let us realize that, for each $ k \in \mathbb{N} $,
\begin{equation} \label{frech32}
a^{*} \in Y_{k}^{*} \; \& \; \Vert a^{*} \Vert \leq c \quad \Rightarrow \quad \Vert x^{*} + a^{*} \Vert = 1.
\end{equation}
Indeed, by the property of $ c $,
$$ (x^{*} + a^{*})(x + y_{k}) = x^{*}(x) + a^{*}(y_{k}) \leq \Vert x^{*} \Vert \Vert x \Vert + \Vert a^{*} \Vert \Vert y_{k} \Vert \leq \Vert x \Vert + c \Vert y_{k} \Vert \leq \Vert x + y_{k} \Vert $$
for $ x \in X $ and $ y_{k} \in Y_{k} $. It follows that
\begin{equation} \label{frech33}
\Big\Vert x^{*} + \frac{1}{\alpha _{k} + \frac{\Vert \triangle y_{k}^{*} \Vert }{c}} \Big( \alpha _{k} y_{k}^{*} + \triangle y_{k}^{*} \Big) \Big\Vert \leq 1 \quad \quad \textrm{when } \alpha _{k} + \frac{\Vert \triangle y_{k}^{*} \Vert }{c} > 0.
\end{equation}
Indeed, we can compute (using (\ref{frech32}) and assuming $ \triangle y_{k}^{*} \neq 0 $)
\begin{align*}
\Big\Vert x^{*} + & \frac{1}{\alpha _{k} + \frac{\Vert \triangle y_{k}^{*} \Vert }{c}} \Big( \alpha _{k} y_{k}^{*} + \triangle y_{k}^{*} \Big) \Big\Vert \\
 & = \Big\Vert \frac{\alpha _{k}}{\alpha _{k} + \frac{\Vert \triangle y_{k}^{*} \Vert }{c}} \big( x^{*} + y_{k}^{*} \big) + \frac{\frac{\Vert \triangle y_{k}^{*} \Vert }{c}}{\alpha _{k} + \frac{\Vert \triangle y_{k}^{*} \Vert }{c}} \Big( x^{*} + \frac{c}{\Vert \triangle y_{k}^{*} \Vert } \triangle y_{k}^{*} \Big) \Big\Vert \\
 & \leq \frac{\alpha _{k}}{\alpha _{k} + \frac{\Vert \triangle y_{k}^{*} \Vert }{c}} \Vert x^{*} + y_{k}^{*} \Vert + \frac{\frac{\Vert \triangle y_{k}^{*} \Vert }{c}}{\alpha _{k} + \frac{\Vert \triangle y_{k}^{*} \Vert }{c}} \Big\Vert x^{*} + \frac{c}{\Vert \triangle y_{k}^{*} \Vert } \triangle y_{k}^{*} \Big\Vert \\
 & \leq \frac{\alpha _{k}}{\alpha _{k} + \frac{\Vert \triangle y_{k}^{*} \Vert }{c}} + \frac{\frac{\Vert \triangle y_{k}^{*} \Vert }{c}}{\alpha _{k} + \frac{\Vert \triangle y_{k}^{*} \Vert }{c}}.
\end{align*}

Let us define
\begin{equation} \label{frech34}
\left\{ \parbox{11cm}{
$$ \beta _{k} = \alpha _{k} + \frac{\Vert \triangle y_{k}^{*} \Vert }{c} \quad \textrm{when } k \geq 2, $$
$$ \beta _{1} = \Big( \sum _{k \in \mathbb{N}} \alpha _{k}^{2} - \sum _{k \geq 2} \beta _{k}^{2} \Big) ^{1/2}. $$
}
\right.
\end{equation}
We show that $ \beta _{1} $ is well-defined in two steps. We prove first that
\begin{equation} \label{frech35}
\Big\vert \alpha _{1}^{2} - \Big( \sum _{k \in \mathbb{N}} \alpha _{k}^{2} - \sum _{k \geq 2} \beta _{k}^{2} \Big) \Big\vert \leq \frac{8}{c^{2}} \Vert \triangle z^{*} \Vert .
\end{equation}
This follows from the computation (we use (\ref{frech31}) and (\ref{frech34}))
\begin{eqnarray*}
\Big\vert \alpha _{1}^{2} - \Big( \sum _{k \in \mathbb{N}} \alpha _{k}^{2} - \sum _{k \geq 2} \beta _{k}^{2} \Big) \Big\vert & = & \Big\vert - \sum _{k \geq 2} \alpha _{k}^{2} + \sum _{k \geq 2} \Big( \alpha _{k} + \frac{\Vert \triangle y_{k}^{*} \Vert }{c} \Big) ^{2} \Big\vert \\
 & = & \sum _{k \geq 2} 2 \alpha _{k} \cdot \frac{\Vert \triangle y_{k}^{*} \Vert }{c} + \sum _{k \geq 2} \Big( \frac{\Vert \triangle y_{k}^{*} \Vert }{c} \Big) ^{2} \\
 & \leq & \frac{2}{c} \Big( \sum _{k \geq 2} \alpha _{k}^{2} \Big) ^{1/2} \Big( \sum _{k \geq 2} \Vert \triangle y_{k}^{*} \Vert ^{2} \Big) ^{1/2} + \frac{1}{c^{2}} \sum _{k \geq 2} \Vert \triangle y_{k}^{*} \Vert ^{2} \\
 & \leq & \frac{2}{c} \cdot 1 \cdot 2 \Vert \triangle z^{*} \Vert + \frac{1}{c^{2}} (2 \Vert \triangle z^{*} \Vert )^{2}.
\end{eqnarray*}
We obtain
\begin{equation} \label{frech36}
\sum _{k \in \mathbb{N}} \alpha _{k}^{2} - \sum _{k \geq 2} \beta _{k}^{2} \geq \frac{1}{4} \alpha _{1}^{2},
\end{equation}
since (by (\ref{frech29a}) and (\ref{frech35}))
$$ \alpha _{1}^{2} - \Big( \sum _{k \in \mathbb{N}} \alpha _{k}^{2} - \sum _{k \geq 2} \beta _{k}^{2} \Big) \leq \frac{8}{c^{2}} \Vert \triangle z^{*} \Vert \leq \frac{8}{c^{2}} \delta \leq \frac{8}{c^{2}} \cdot \frac{3}{32} c^{2} \alpha _{1}^{2} = \frac{3}{4} \alpha _{1}^{2}. $$
It follows now from (\ref{frech36}) that $ \beta _{1} $ is well-defined and
\begin{equation} \label{frech37}
\beta _{1} \geq \alpha _{1}/2.
\end{equation}
Moreover,
\begin{equation} \label{frech38}
\Big\vert \frac{\alpha _{1}}{\beta _{1}} - 1 \Big\vert \leq \frac{32}{3c^{2} \alpha _{1}^{2}} \Vert \triangle z^{*} \Vert ,
\end{equation}
as we can compute (using (\ref{frech35}) and (\ref{frech37}))
$$ \Big\vert \frac{\alpha _{1}}{\beta _{1}} - 1 \Big\vert = \frac{1}{\beta _{1}(\alpha _{1} + \beta _{1})} \cdot \vert \alpha _{1}^{2} - \beta _{1}^{2} \vert \leq \frac{1}{(\alpha _{1}/2)(\alpha _{1} + (\alpha _{1}/2))} \cdot \frac{8}{c^{2}} \Vert \triangle z^{*} \Vert = \frac{32}{3c^{2} \alpha _{1}^{2}} \Vert \triangle z^{*} \Vert . $$
Consequently,
\begin{equation} \label{frech39}
\Big\Vert - y_{1}^{*} + \frac{1}{\beta _{1}} \Big( \alpha _{1} y_{1}^{*} + \triangle y_{1}^{*} \Big) \Big\Vert \leq \frac{76}{3c^{2} \alpha _{1}^{2}} \Vert \triangle z^{*} \Vert ,
\end{equation}
as we can compute (using (\ref{frech31}), (\ref{frech37}), (\ref{frech38}) and $ \Vert y_{1}^{*} \Vert \leq \Vert x^{*} + y_{1}^{*} \Vert + \Vert -x^{*} \Vert \leq 2 $)
\begin{eqnarray*}
\Big\Vert - y_{1}^{*} + \frac{1}{\beta _{1}} \Big( \alpha _{1} y_{1}^{*} + \triangle y_{1}^{*} \Big) \Big\Vert & \leq & \Big\vert \frac{\alpha _{1}}{\beta _{1}} - 1 \Big\vert \Vert y_{1}^{*} \Vert + \frac{1}{\beta _{1}} \Vert \triangle y_{1}^{*} \Vert \\
 & \leq & \frac{32}{3c^{2} \alpha _{1}^{2}} \Vert \triangle z^{*} \Vert \cdot 2 + \frac{2}{\alpha _{1}} \cdot 2 \Vert \triangle z^{*} \Vert .
\end{eqnarray*}
Finally, it follows from (\ref{frech29b}) and (\ref{frech39}) that
\begin{equation} \label{frech40}
\Big\Vert x^{*} + \frac{1}{\beta _{1}} \Big( \alpha _{1} y_{1}^{*} + \triangle y_{1}^{*} \Big) \Big\Vert \leq 1 + \varepsilon \Vert \triangle z^{*} \Vert .
\end{equation}

Now, using (\ref{frech33}), (\ref{frech34}), (\ref{frech40}) and Lemma \ref{lembound}, we obtain
\begin{eqnarray*}
 \Vert z^{*} + \triangle z^{*} \Vert & = & \Big\Vert x^{*} + \sum _{k \in \mathbb{N}} \alpha _{k} y_{k}^{*} + \sum _{k \in \mathbb{N}} \triangle y_{k}^{*} \Big\Vert \\
 & = & \Big\Vert x^{*} + \sum _{\beta _{k} > 0} \beta _{k} \frac{1}{\beta _{k}} \Big( \alpha _{k} y_{k}^{*} + \triangle y_{k}^{*} \Big) \Big\Vert \\
 & \leq & \sup _{\beta _{k} > 0} \Big\Vert x^{*} + \frac{1}{\beta _{k}} \Big( \alpha _{k} y_{k}^{*} + \triangle y_{k}^{*} \Big) \Big\Vert \\
 & \leq & \max \Big\{ 1, \Big\Vert x^{*} + \frac{1}{\beta _{1}} \Big( \alpha _{1} y_{1}^{*} + \triangle y_{1}^{*} \Big) \Big\Vert \Big\} \\
 & \leq & 1 + \varepsilon \Vert \triangle z^{*} \Vert ,
\end{eqnarray*}
and ($ ** $) is proved.
\end{proof}

\section{Construction of branches}

In this section, we construct the subspace of our tree space supported by one branch. The construction provides an improved version of \cite[Proposition 2.2]{godkal}.

\begin{proposition} \label{propbrabs}
Let $ (X, \Vert \cdot \Vert _{X}) $ be a Banach space with a monotone basis $ \{ e_{1}, e_{2}, \dots \} $ and its dual basis $ \{ e_{1}^{*}, e_{2}^{*}, \dots \} $. Then there is a Banach space $ (F, \Vert \cdot \Vert ) $ with a monotone basis $ \{ f_{1}, f_{2}, \dots \} $ and its dual basis $ \{ f_{1}^{*}, f_{2}^{*}, \dots \} $ such that:

{\rm (1)} If $ (P_{n})_{n=1}^{\infty } $ denotes the sequence of partial sum operators associated with the basis $ \{ f_{1}, f_{2}, \dots \} $, then
$$ \Vert f \Vert \geq \Vert P_{n}f \Vert + 4^{-n-1} \Vert f - P_{n}f \Vert , \quad f \in F, n \in \mathbb{N}. $$

{\rm (2)} The norm of $ F $ is strictly convex on the linear span of the basis vectors.

{\rm (3)} $ F $ contains an $ 1 $-complemented isometric copy of $ X $.

{\rm (4)} $ \overline{\mathrm{span}} \{ f_{1}^{*}, f_{2}^{*}, \dots \} $ contains an $ 1 $-complemented isometric copy of $ \overline{\mathrm{span}} \{ e_{1}^{*}, e_{2}^{*}, \dots \} $.
\end{proposition}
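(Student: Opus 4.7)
The plan is to realize $F$ as an enlargement of $X$ obtained by interleaving the basis $\{e_n\}$ of $X$ with an auxiliary strictly convex sequence space and defining a norm with an exponentially decaying weight structure. Concretely, take $F = X \oplus Z$ as a vector space with $Z$ strictly convex (say a weighted $\ell^2$), fix an ordered basis $\{f_n\}$ of $F$ alternating ``$X$-direction'' and ``$Z$-direction'' vectors, and specify an isometric embedding $\iota \colon X \to F$ whose image is a \emph{twisted} subspace of $F$ rather than the closed linear span of a subsequence of $\{f_n\}$.

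The norm would then be of the form $\|x + z\|_F = \|x\|_X + \Psi(x, z)$, where $\Psi \geq 0$ is strictly convex in $z$, vanishes on the graph $\{(x, \iota_Z(x)) : x \in X\}$ of the $Z$-component of $\iota$, and contributes on the $n$-th auxiliary coordinate with a weight of order $4^{-n-1}$. Property (3) is then immediate from the vanishing of $\Psi$ on that graph, with the norm-one projection $\Pi(x + z) = \iota(x)$ giving $1$-complementation. Property (4) follows from a dual-norm computation that identifies a $1$-complemented isometric copy of $\overline{\mathrm{span}}\{e_n^*\}$ inside $F^*$. Property (2) is a consequence of the strict convexity of the $Z$-contribution: any equality case in the triangle inequality on finite linear combinations of the $f_n$ pins down both the $X$-component and the $Z$-component.

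The principal obstacle is property (1). Since the basis $\{e_n\}$ of $X$ is only monotone, a plain subsequence-embedding of $X$ along the $f_n$'s would instantly force a skewed monotonicity with constant $4^{-n-1}$ on $X$'s own basis, which need not hold. The resolution is to take $\iota$ to be genuinely twisted, so that when $f = \iota(x)$ the partial sum $P_n f$ is \emph{not} of the form $\iota(y)$; the resulting mismatch between $F$-partial-sums and $X$-partial-sums is absorbed by the exponentially weighted $\Psi$-term and supplies the required gap. Arranging $\iota$ together with the weights so that $\|f\|_F \geq \|P_n f\|_F + 4^{-n-1}\|f - P_n f\|_F$ holds uniformly for \emph{every} $f \in F$, not only on $\iota(X)$, is the heart of the construction, and is what dictates the particular choice of the $4^{-n-1}$ coefficients.
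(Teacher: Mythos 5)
There is a genuine gap, and it is not only that the hard part (property (1)) is left unresolved; the framework you propose has a concrete obstruction that rules it out.

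You take $F=X\oplus Z$ as a vector space, $\iota(x)=x+\iota_Z(x)$ so that the image of $\iota$ is the graph of $\iota_Z$, and $\Vert x+z\Vert_F=\Vert x\Vert_X+\Psi(x,z)$ with $\Psi\ge 0$ vanishing on that graph. With $\{f_n\}$ interleaving $X$-direction vectors $e_k$ and $Z$-direction vectors, fix $n$ so that $P_n$ retains $e_1$ and nothing else of $\iota(e_1)$; i.e.\ $P_n\iota(e_1)=e_1$ (or $e_1$ plus a proper partial sum of $\iota_Z(e_1)$ --- the argument is the same). Then
$$ \Vert \iota(e_1)\Vert_F=\Vert e_1\Vert_X, \qquad \Vert P_n\iota(e_1)\Vert_F=\Vert e_1\Vert_X+\Psi(e_1,\cdot)\ge \Vert e_1\Vert_X, $$
so property (1) forces $\Psi(e_1,\cdot)=0$ and $\Vert \iota(e_1)-P_n\iota(e_1)\Vert_F=0$, i.e.\ $\iota_Z(e_1)=0$; the same for every $e_k$. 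Your embedding is therefore not twisted at all, and you are back to the plain subsequence-embedding you correctly identified as unworkable. The additive form of the norm together with the requirement that the $e_k$ be basis vectors of $F$ is incompatible with (1).

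The paper escapes precisely by \emph{not} letting $X$'s basis vectors be basis vectors of $F$, and by \emph{not} using an additive formula. It works over $c_{00}(\mathbb D)$, with $\mathbb D=\{(n,k):1\le k\le n\}$ ordered lexicographically, and embeds $X$ by a diagonal smearing: the copy of $e_k$ is $\widehat U e_k=\frac{3}{4}\sum_{n\ge k}2^{k-n}f_{nk}$, spread over infinitely many coordinates with geometrically decaying weights, while $T$ provides a norm-one left inverse so that $\widehat U\widehat T$ is a norm-one projection (your ``graph'' picture survives only in the sense that $F=\widehat U X\oplus\ker\widehat T$, but $\widehat U X$ is not a graph over a coordinate block). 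The norm itself is produced by a delicate two-stage process: a base norm $\Vert\cdot\Vert_0$ defined as a maximum of $T$-pullback seminorms (Claim 4.6), then a recursive refinement $\Vert\cdot\Vert_i$ interlacing a strictly convex $\ell^1+\ell^2$ term $\vert\cdot\vert$ via convex hulls of unit balls (Claim 4.8), with constants $d_i=(1-4^{-i})/(1-4^{-i-1})$ tuned so that the skewed monotonicity constant $4^{-n-1}$ is preserved at every stage and survives in the limit norm (Claim 4.10). Both the diagonal smearing and the recursive norm refinement are indispensable, and neither appears in your sketch. Properties (3) and (4) then follow from $\widehat T\widehat U=\mathrm{id}_X$ and the dual identities as you anticipate, and (2) from the $\vert\cdot\vert$-component in the recursion; but these are the easy parts once the construction achieving (1) is in place.
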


The construction is provided in several steps. We introduce some notation first. Without loss of generality, we assume that 
$$ \Vert e_{i} \Vert _{X} = 1, \quad i = 1, 2, \dots \; . $$
By $ (Q_{n})_{n=1}^{\infty } $ we denote the sequence of partial sum operators associated with the basis $ \{ e_{1}, e_{2}, \dots \} $. By $ \{ f_{1}, f_{2}, \dots \} $ we denote the canonical basis of $ c_{00}(\mathbb{N}) $, by $ \{ f_{1}^{*}, f_{2}^{*}, \dots \} $ its dual basis and by $ (P_{n})_{n=1}^{\infty } $ the sequence of associated partial sum operators.

We work with the ordered set $ \mathbb{D} $ from \cite{godkal}. Recall that $ \mathbb{D} $ is the set of all pairs $ (n, k) $ of natural numbers with $ 1 \leq k \leq n $ ordered lexicographically, i.e.,
$$ (n, k) \leq (m, l) \quad \Leftrightarrow \quad \textrm{$ n < m $ or ($ n = m $ and $ k \leq l $)} . $$
Notice that $ (\mathbb{D}, \leq ) $ is a copy of $ (\mathbb{N}, \leq ) $. We make no difference between $ c_{00}(\mathbb{D}) $ and $ c_{00}(\mathbb{N}) $, including their canonical bases and partial sum operators.

We define an operator $ T : c_{00}(\mathbb{D}) \rightarrow X $ by
$$ T \Big( \sum _{(n, k) \in \mathbb{D}} \mu _{nk} f_{nk} \Big) = \sum _{(n, k) \in \mathbb{D}} 2^{k - n} \mu _{nk} e_{k} $$
and, for every $ (N, K) \in \mathbb{D} $, an operator $ U_{NK} : X \rightarrow c_{00}(\mathbb{D}) $ by
$$ U_{NK} \Big( \sum _{k = 1}^{\infty } \lambda _{k} e_{k} \Big) = \frac{3}{4} \sum _{(n, k) \leq (N, K)} 2^{k - n} \lambda _{k} f_{nk}. $$
Further, we consider the norm $ \vert \cdot \vert $ on $ c_{00}(\mathbb{N}) $ defined by
$$ \vert f \vert = \sum _{k \in \mathbb{N}} \vert \mu _{k} \vert + \Big( \sum _{k \in \mathbb{N}} \mu _{k}^{2} \Big) ^{1/2} \quad \textrm{for } f = \sum _{k \in \mathbb{N}} \mu _{k} f_{k} \in c_{00}(\mathbb{N}). $$

\begin{claim} \label{clmon}
Let
$$ x = \sum _{k=1}^{\infty } \lambda _{k} e_{k} \in X. $$ 
If $ \alpha _{1} \geq \alpha _{2} \geq \dots \geq \alpha _{n} \geq 0 $, then
$$ \Big\Vert \sum _{k = 1}^{n} \alpha _{k} \lambda _{k} e_{k} \Big\Vert _{X} \leq \alpha _{1} \Vert x \Vert _{X}. $$
\end{claim}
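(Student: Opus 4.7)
The plan is to use a standard Abel summation trick to express the partially truncated and scaled vector as a convex-type combination (with weights summing to $\alpha_1$) of initial segments $Q_k x$, and then invoke monotonicity of the basis.

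Concretely, I would set $\alpha_{n+1} = 0$ and verify the identity
$$ \sum _{k=1}^{n} \alpha _{k} \lambda _{k} e_{k} = \sum _{k=1}^{n} (\alpha _{k} - \alpha _{k+1}) Q_{k}x $$
by the usual telescoping computation: expanding $Q_k x = \sum_{j=1}^{k} \lambda_j e_j$ on the right and interchanging the order of summation, the inner sum $\sum_{k=j}^{n}(\alpha_k - \alpha_{k+1})$ collapses to $\alpha_j$ thanks to the choice $\alpha_{n+1}=0$.

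Once the identity is in hand, the triangle inequality together with the fact that the coefficients $\alpha_k - \alpha_{k+1}$ are nonnegative (by the assumed monotonicity of the $\alpha_k$) gives
$$ \Big\Vert \sum _{k=1}^{n} \alpha _{k} \lambda _{k} e_{k} \Big\Vert _{X} \leq \sum _{k=1}^{n} (\alpha _{k} - \alpha _{k+1}) \Vert Q_{k}x \Vert _{X}. $$
Since $\{e_1, e_2, \dots\}$ is a monotone basis of $X$, we have $\Vert Q_k x\Vert_X \leq \Vert x\Vert_X$ for every $k$, so the right-hand side is bounded by $\Vert x\Vert_X \sum_{k=1}^{n}(\alpha_k - \alpha_{k+1}) = \alpha_1 \Vert x\Vert_X$ (telescoping again).

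There is no real obstacle here; the only subtlety is making sure to use the $\alpha_{n+1}=0$ convention so the telescoping works cleanly at the top end, and noting that the Abel summation coefficients are nonnegative precisely because of the monotonicity hypothesis on $(\alpha_k)$. Monotonicity of the basis is used only through the uniform bound $\Vert Q_k x\Vert_X \leq \Vert x\Vert_X$.
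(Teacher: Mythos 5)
Your argument is correct and matches the paper's proof exactly: the paper also sets $\alpha_{n+1}=0$, writes $\sum_{k=1}^{n}\alpha_k\lambda_k e_k=\sum_{k=1}^{n}(\alpha_k-\alpha_{k+1})Q_k x$, and applies the triangle inequality together with monotonicity of the basis.
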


\begin{proof}
Set $ \alpha _{n+1} = 0 $ and apply the triangle inequality on
$$ \sum _{k = 1}^{n} \alpha _{k} \lambda _{k} e_{k} = \sum _{k=1}^{n} (\alpha _{k} - \alpha _{k+1}) Q_{k}x. $$
\end{proof}

\begin{claim} \label{cltu}
{\rm (i)} If $ x \in X $, then $ \Vert TU_{NK}x \Vert _{X} \leq (1 - 4^{-N})\Vert x \Vert _{X}, (N,K) \in \mathbb{D} $.

{\rm (ii)} If $ x \in \mathrm{span} \{ e_{1}, e_{2}, \dots \} $, then $ TU_{NK}x \rightarrow x $ as $ (N,K) \rightarrow \infty $.

{\rm (iii)} If $ x \in \mathrm{span} \{ e_{1}, e_{2}, \dots \} $ and $ \Vert \cdot \Vert $ is a norm on $ c_{00}(\mathbb{D}) $ for which $ \sup \Vert f_{nk} \Vert < \infty $, then $ (U_{NK}x)_{(N,K) \in \mathbb{D}} $ is a Cauchy sequence with respect to $ \Vert \cdot \Vert $.

{\rm (iv)} If $ x^{*} \in \mathrm{span} \{ e_{1}^{*}, e_{2}^{*}, \dots \} $ and $ \Vert \cdot \Vert $ is a norm on $ c_{00}(\mathbb{D}) $ for which $ \sup \Vert f_{nk}^{*} \Vert < \infty $, then $ x^{*} \circ T $ is continuous with respect to $ \Vert \cdot \Vert $ and belongs to $ \overline{\mathrm{span}} \{ f_{11}^{*}, f_{21}^{*}, f_{22}^{*}, \dots \} $.

{\rm (v)} If $ f^{*} \in \mathrm{span} \{ f_{11}^{*}, f_{21}^{*}, f_{22}^{*}, \dots \} $, then $ x \mapsto \lim _{(N,K) \rightarrow \infty } f^{*}(U_{NK}x) $ defines a functional which is continuous with respect to $ \Vert \cdot \Vert _{X} $ and belongs to $ \mathrm{span} \{ e_{1}^{*}, e_{2}^{*}, \dots \} $.
\end{claim}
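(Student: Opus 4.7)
The approach is to prove each of (i)--(v) by an explicit computation, exploiting the fact that the coefficients $2^{k-n}$ in $T$ and $U_{NK}$ decay geometrically in the vertical direction and that $T \circ U_{NK}$ is diagonal on the $e_k$ basis.

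For part (i), the plan is to compute $TU_{NK}x$ directly. Writing $x = \sum \lambda_k e_k$, one gets $TU_{NK}x = \sum_k \alpha_k^{(N,K)} \lambda_k e_k$ where
\[ \alpha_k^{(N,K)} = \tfrac{3}{4} \sum_{\substack{n \geq k \\ (n,k) \leq (N,K)}} 4^{k-n}. \]
A short geometric-series calculation gives $\alpha_k = 1 - 4^{-(N-k+1)}$ for $k \leq K$ and $\alpha_k = 1 - 4^{-(N-k)}$ for $K < k \leq N-1$ (and $\alpha_k=0$ for $k \geq N$). Checking the drops $\alpha_{k+1} - \alpha_k$ case by case shows the sequence is nonincreasing with $\alpha_1 = 1 - 4^{-N}$, so Claim~\ref{clmon} yields $\Vert TU_{NK}x\Vert_X \leq (1-4^{-N})\Vert x\Vert_X$. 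Part (ii) is then immediate: for $x$ in the linear span, $TU_{NK}x$ is a finite combination whose coefficients $\alpha_k^{(N,K)}$ tend to $1$ as $(N,K)\to\infty$.

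Part (iii) is a tail estimate. For $x = \sum_{k=1}^M \lambda_k e_k$ and $(N,K) \leq (N',K')$, the difference $U_{N'K'}x - U_{NK}x$ is supported on pairs $(n,k)$ with $k \leq M$ and $(n,k) > (N,K)$, with coefficient $\tfrac{3}{4}\cdot 2^{k-n}\lambda_k$. Using $\sup\Vert f_{nk}\Vert \leq C$ and the triangle inequality, the norm is bounded by
\[ \tfrac{3}{4}C \sum_{k=1}^M |\lambda_k| \sum_{n > N} 2^{k-n}, \]
which is $O(2^{-N})$ and thus tends to $0$. Part (iv) is also a direct expansion: for $x^* = \sum_{k=1}^M c_k e_k^*$ one has $(x^* \circ T)(f_{nk}) = 2^{k-n}c_k$ (for $k \leq M$, else $0$), so formally
\[ x^* \circ T = \sum_{k=1}^M c_k \sum_{n \geq k} 2^{k-n} f_{nk}^*. \]
The partial sums in $n$ converge in the dual norm (using $\sup \Vert f_{nk}^*\Vert < \infty$), giving membership in $\overline{\mathrm{span}}\{f_{11}^*,f_{21}^*,f_{22}^*,\dots\}$; continuity follows since each $\mu_{nk} = f_{nk}^*(f)$ satisfies $|\mu_{nk}| \leq (\sup\Vert f_{nk}^*\Vert)\Vert f\Vert$.

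Part (v) is the easiest: if $f^* = \sum_{(n,k)\in S} b_{nk} f_{nk}^*$ with $S$ finite, then for $(N,K)$ large enough to dominate $S$ one has
\[ f^*(U_{NK}x) = \tfrac{3}{4} \sum_{(n,k)\in S} 2^{k-n} b_{nk} \lambda_k = \sum_k d_k \, e_k^*(x), \]
with only finitely many nonzero $d_k = \tfrac{3}{4}\sum_{n:(n,k)\in S} 2^{k-n}b_{nk}$, which is manifestly a continuous functional in $\mathrm{span}\{e_1^*,e_2^*,\dots\}$. There is no substantive obstacle in this claim; the only mildly delicate step is verifying the monotonicity of the $\alpha_k$ in (i) across the transition point $k = K$, which has to be checked separately from the generic step because of the asymmetric definition of the lexicographic order on $\mathbb{D}$.
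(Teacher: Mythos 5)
Your proposal is correct and follows essentially the same route as the paper: compute $TU_{NK}x$ explicitly as a diagonal operator with coefficients summing a geometric series, then apply Claim~\ref{clmon} for (i)--(ii), use a direct tail estimate against $\sup\Vert f_{nk}\Vert$ for (iii), and expand $e_k^*\circ T = \sum_{n\geq k}2^{k-n}f_{nk}^*$ (and its near-inverse for (v)) for (iv)--(v). The only minor slip is the bookkeeping of your coefficient formula in (i) in the boundary case $K=N$, where $\alpha_N = 1 - 4^{-1} = 3/4$ rather than $0$; this does not affect the argument since the conclusion rests only on $\alpha_1 = 1-4^{-N}$, nonnegativity, and monotonicity in $k$, all of which persist.
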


\begin{proof}
Let
$$ x = \sum _{k=1}^{\infty } \lambda _{k} e_{k} \in X $$
be fixed throughout the proof of (i)--(iii). We write
$$ U_{NK}x = \sum _{(n, k) \leq (N, K)} \mu _{nk} f_{nk} \quad \textrm{where } \mu _{nk} = \frac{3}{4} \cdot 2^{k - n} \lambda _{k}. $$
We compute
\begin{eqnarray*}
TU_{NK}x & = & \sum _{(n, k) \leq (N, K)} 2^{k - n} \mu _{nk} e_{k} = \frac{3}{4} \sum _{(n, k) \leq (N, K)} 2^{k - n} \cdot 2^{k - n} \lambda _{k} e_{k} \\
 & = & \sum _{k=1}^{K} \frac{3}{4} \Big( \sum _{n=k}^{N} 2^{k - n} \cdot 2^{k - n} \Big) \lambda _{k} e_{k} + \sum _{k=K+1}^{N} \frac{3}{4} \Big( \sum _{n=k}^{N-1} 2^{k - n} \cdot 2^{k - n} \Big) \lambda _{k} e_{k} \\
 & = & \sum _{k=1}^{K} \frac{3}{4} \Big( \sum _{l=0}^{N-k} 4^{-l} \Big) \lambda _{k} e_{k} + \sum _{k=K+1}^{N} \frac{3}{4} \Big( \sum _{l=0}^{N-k-1} 4^{-l} \Big) \lambda _{k} e_{k} \\
 & = & \sum _{k=1}^{K} (1 - 4^{k-N-1}) \lambda _{k} e_{k} + \sum _{k=K+1}^{N} (1 - 4^{k-N}) \lambda _{k} e_{k}.
\end{eqnarray*}
We obtain from Claim \ref{clmon} that
$$ \Vert TU_{NK}x \Vert _{X} \leq (1 - 4^{-N}) \Vert x \Vert _{X}, $$
which gives (i).

Now, if $ x \in \mathrm{span} \{ e_{1}, e_{2}, \dots \} $, then there is $ m \in \mathbb{N} $ such that $ \lambda _{k} = 0 $ for $ k > m $. Therefore, if $ N \geq m $, then (by the above computation of $ TU_{NK}x $)
$$ TU_{NK}x = \sum _{k=1}^{m} (1 - 4^{k-N-1_{\leq }(k,K)}) \lambda _{k} e_{k}. $$
(where $ 1_{\leq }(k,K) = 1 $ when $ k \leq K $ and $ 1_{\leq }(k,K) = 0 $ when $ k > K $). It is clear now that
$$ TU_{NK}x \rightarrow \sum _{k=1}^{m} \lambda _{k} e_{k} = x, $$
which gives (ii). Further, let $ \Vert \cdot \Vert $ be a norm such that $ C = \sup \Vert f_{nk} \Vert < \infty $. If $ (N,K) \leq (M,L) $ are two elements of $ \mathbb{D} $, then
\begin{eqnarray*}
\Vert U_{ML}x - U_{NK}x \Vert & = & \Big\Vert \sum _{(N, K) < (n, k) \leq (M, L)} \mu _{nk} f_{nk} \Big\Vert \\
 & = & \Big\Vert \sum _{(N, K) < (n, k) \leq (M, L)} \frac{3}{4} \cdot 2^{k - n} \lambda _{k} f_{nk} \Big\Vert \\
 & \leq & \frac{3}{4}C \sum _{k = 1}^{m} \vert \lambda _{k} \vert \sum _{n = N}^{\infty } 2^{k - n},
\end{eqnarray*}
which gives (iii).

To prove (iv), it is sufficient to show that, for $ k \in \mathbb{N} $,
$$ e_{k}^{*} \circ T = \sum _{n = k}^{\infty } 2^{k - n} f_{nk}^{*}. $$
For $ \sum _{(n, l) \in \mathbb{D}} \mu _{nl} f_{nl} \in c_{00}(\mathbb{D}) $, we write
\begin{eqnarray*}
(e_{k}^{*} \circ T) \Big( \sum _{(n, l) \in \mathbb{D}} \mu _{nl} f_{nl} \Big) & = & e_{k}^{*} \Big( \sum _{(n, l) \in \mathbb{D}} 2^{l - n} \mu _{nl} e_{l} \Big) \\
 & = & \sum _{n = k}^{\infty } 2^{k - n} \mu _{nk} \\
 & = & \Big( \sum _{n = k}^{\infty } 2^{k - n} f_{nk}^{*} \Big) \Big( \sum _{(n, l) \in \mathbb{D}} \mu _{nl} f_{nl} \Big) .
\end{eqnarray*}

To prove (v), it is sufficient to show that, for $ (n,k) \in \mathbb{D} $,
$$ \lim _{(N,K) \rightarrow \infty } f_{nk}^{*}(U_{NK}x) = \frac{3}{4} \cdot 2^{k - n} e_{k}^{*}(x), \quad x \in X. $$
Let $ x = \sum _{k=1}^{\infty } \lambda _{k} e_{k} \in X $. When $ (N,K) \geq (n,k) $, then we can write
$$ f_{nk}^{*}(U_{NK}x) = f_{nk}^{*}\Big( \frac{3}{4} \sum _{(m, l) \leq (N, K)} 2^{l - m} \lambda _{l} f_{ml} \Big) = \frac{3}{4} \cdot 2^{k - n} \lambda _{k} = \frac{3}{4} \cdot 2^{k - n} e_{k}^{*}(x). $$
\end{proof}

\begin{claim} \label{clmn0}
There is a norm $ \Vert \cdot \Vert _{0} $ on $ c_{00}(\mathbb{D}) $ such that

{\rm (a)} $ \{ f_{11}, f_{21}, f_{22}, \dots \} $ is a monotone basis with respect to $ \Vert \cdot \Vert _{0} $,

{\rm (b)} $ \Vert f_{nk} \Vert _{0} \leq 4/3 $ and $ \Vert f_{nk}^{*} \Vert _{0} \leq 3/2 $ for $ (n,k) \in \mathbb{D} $,

{\rm (c)} $ \Vert U_{NK}x \Vert _{0} \leq \Vert x \Vert _{X}, x \in X, (N,K) \in \mathbb{D} $,

{\rm (d)} $ \Vert TP_{nk}f \Vert _{X} \leq (1 - 4^{-n}) \Vert f \Vert _{0}, f \in c_{00}(\mathbb{D}), (n,k) \in \mathbb{D} $.
\end{claim}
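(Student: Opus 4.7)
The plan is to define $\|\cdot\|_{0}$ as a maximum of two suprema, one tailored to force (d) and one to force the coefficient bound in (b):
$$ \|f\|_{0} = \max\Big\{ \sup_{(n,k) \in \mathbb{D}} \frac{\|T P_{nk} f\|_{X}}{1 - 4^{-n}}, \; \sup_{(n,k) \in \mathbb{D}} \tfrac{2}{3}|f_{nk}^{*}(f)| \Big\}, \quad f \in c_{00}(\mathbb{D}). $$
This is clearly a seminorm; it separates points because for $f \neq 0$ the second supremum is already nonzero at the largest index $(n,k)$ with $f_{nk}^{*}(f) \neq 0$.

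Property (d) reads off the first supremum, and the estimate $\|f_{nk}^{*}\|_{0} \leq 3/2$ in (b) is the direct reformulation of the second supremum. For $\|f_{nk}\|_{0} \leq 4/3$, observe that $T P_{ml} f_{nk}$ vanishes unless $(n,k) \leq (m,l)$, in which case $\|TP_{ml}f_{nk}\|_{X} = 2^{k-n}$; so the first supremum equals $2^{k-n}/(1-4^{-n}) \leq 1/(1-4^{-1}) = 4/3$ (the maximum attained at $k=n=1$), and the second supremum equals $2/3$.

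For (a), it suffices to show $\|P_{ml}f\|_{0} \leq \|f\|_{0}$. Inside the first supremum, $P_{nk}P_{ml} = P_{\min((n,k),(m,l))}$; when $(n,k) > (m,l)$ one has $n \geq m$, so $1/(1-4^{-n}) \leq 1/(1-4^{-m})$ and the $(n,k)$-term for $P_{ml}f$ is dominated by the $(m,l)$-term for $f$. Inside the second supremum, $f_{nk}^{*}(P_{ml}f)$ equals $f_{nk}^{*}(f)$ when $(n,k) \leq (m,l)$ and vanishes otherwise.

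The substantive step is (c). For the first supremum, since $U_{NK}x$ is supported on indices $\leq (N,K)$, we have $P_{nk}U_{NK}x = U_{\min((n,k),(N,K))}x$, so Claim \ref{cltu}(i) gives $\|TP_{nk}U_{NK}x\|_{X} \leq (1 - 4^{-\min(n,N)})\|x\|_{X} \leq (1-4^{-n})\|x\|_{X}$. For the second supremum, unfolding the definition of $U_{NK}$ gives $f_{nk}^{*}(U_{NK}x) = \tfrac{3}{4}\cdot 2^{k-n} e_{k}^{*}(x)$ when $(n,k) \leq (N,K)$ and zero otherwise; since $\{e_{k}\}$ is a normalized monotone basis of $X$ one has $|e_{k}^{*}(x)| \leq 2\|x\|_{X}$, and therefore $\tfrac{2}{3}|f_{nk}^{*}(U_{NK}x)| \leq 2^{k-n}\|x\|_{X} \leq \|x\|_{X}$. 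No part of the argument is genuinely hard; the only delicacy is the balance of the constants $2/3$ and $3/2$, which must be chosen small enough to keep the second supremum $\leq \|x\|_{X}$ on $U_{NK}(B_{X})$ yet large enough to control $\|f_{nk}^{*}\|_{0}$ by $3/2$.
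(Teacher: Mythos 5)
Your proof is correct and is essentially the paper's own proof: you define $\|\cdot\|_0$ by exactly the same two-part maximum, and properties (b) (for $f_{nk}^*$), (c), and (d) are verified identically; the only difference is that you spell out the monotonicity check (a) and the bound $\|f_{nk}\|_0\leq 4/3$, which the paper omits as ``easy.''
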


\begin{proof}
We define
$$ \Vert f \Vert _{0} = \max \Big\{ \frac{2}{3} \sup _{(n,k) \in \mathbb{D}} \vert f_{nk}^{*}(f) \vert , \sup _{(n,k) \in \mathbb{D}} \frac{1}{1 - 4^{-n}} \Vert TP_{nk}f \Vert _{X} \Big\} , \quad \quad f \in c_{00}(\mathbb{D}). $$
We omit the easy proof of the properties (a) and (d) and of the inequality $ \Vert f_{nk}^{*} \Vert _{0} \leq 3/2 $ in (b). To show the inequality $ \Vert f_{nk} \Vert _{0} \leq 4/3 $, we need to show that
$$ \frac{1}{1 - 4^{-m}} \Vert TP_{ml}f_{nk} \Vert _{X} \leq \frac{4}{3}, \quad (m,l) \in \mathbb{D}. $$
If $ (m,l) < (n,k) $, then $ P_{ml}f_{nk} = 0 $, and the inequality is clear. If $ (m,l) \geq (n,k) $, then $ P_{ml}f_{nk} = f_{nk} $, and we can compute
$$ \frac{1}{1 - 4^{-m}} \Vert TP_{ml}f_{nk} \Vert _{X} \leq \frac{4}{3} \Vert TP_{ml}f_{nk} \Vert _{X} = \frac{4}{3} \Vert Tf_{nk} \Vert _{X} = \frac{4}{3} \Vert 2^{k-n}e_{k} \Vert _{X} \leq \frac{4}{3}. $$

Let us show (c). Let $ x = \sum _{k=1}^{\infty } \lambda _{k} e_{k} \in X $. To show that $ \Vert U_{NK}x \Vert _{0} \leq \Vert x \Vert _{X} $, we need to check that, for $ (n,k) \in \mathbb{D} $,
$$ \frac{2}{3} \vert f_{nk}^{*}(U_{NK}x) \vert \leq \Vert x \Vert _{X} \quad \textrm{and} \quad \Vert TP_{nk}U_{NK}x \Vert _{X} \leq (1 - 4^{-n})\Vert x \Vert _{X}. $$
We compute (considering $ Q_{0} = 0 $)
$$ \vert \lambda _{k} \vert = \Vert \lambda _{k} e_{k} \Vert _{X} = \Vert Q_{k}x - Q_{k-1}x \Vert _{X} \leq (\Vert Q_{k} \Vert _{X} + \Vert Q_{k-1} \Vert _{X})\Vert x \Vert _{X} \leq 2\Vert x \Vert _{X}, $$
and so
$$ \frac{2}{3} \vert f_{nk}^{*}(U_{NK}x) \vert \leq \frac{2}{3} \cdot \frac{3}{4} \cdot 2^{k - n} \vert \lambda _{k} \vert \leq \frac{1}{2} \vert \lambda _{k} \vert \leq \Vert x \Vert _{X}. $$
Further,
\begin{eqnarray*}
P_{nk}U_{NK}x & = & P_{nk}\Big( \frac{3}{4} \sum _{(m, l) \leq (N, K)} 2^{l - m} \lambda _{l} f_{ml} \Big) \\
 & = & \frac{3}{4} \sum _{(m, l) \leq \min \{ (n, k), (N, K) \} } 2^{l - m} \lambda _{l} f_{ml} \\
 & = & U_{\min \{ (n, k), (N, K)\}}x,
\end{eqnarray*}
and so, using Claim \ref{cltu}(i),
\begin{eqnarray*}
\Vert TP_{nk}U_{NK}x \Vert _{X} & = & \Vert TU_{\min \{ (n, k), (N, K) \}}x \Vert _{X} \\
 & \leq & (1 - 4^{-\min \{ n, N \}})\Vert x \Vert _{X} \\
 & \leq & (1 - 4^{-n})\Vert x \Vert _{X}.
\end{eqnarray*}
\end{proof}

\begin{claim} \label{clstrmon}
We have $ \vert f \vert \geq \vert P_{n}f \vert + (1/2) \vert f - P_{n}f \vert , f \in c_{00}(\mathbb{N}), n \in \mathbb{N} $.
\end{claim}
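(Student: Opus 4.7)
The plan is to expand every quantity in the claimed inequality in terms of the coefficients $\mu_k$ and reduce to two elementary facts about the $\ell^1$ and $\ell^2$ norms. Write $f = \sum_{k} \mu_k f_k$ and set
$$ A = \sum_{k \leq n} |\mu_k|, \quad B = \sum_{k > n} |\mu_k|, \quad a = \Big(\sum_{k \leq n} \mu_k^2\Big)^{1/2}, \quad b = \Big(\sum_{k > n} \mu_k^2\Big)^{1/2}. $$
Then, by the definition of $\vert \cdot \vert$,
$$ \vert f \vert = A + B + \sqrt{a^2 + b^2}, \quad \vert P_n f \vert = A + a, \quad \vert f - P_n f \vert = B + b. $$
After cancelling the common term $A$ on both sides, the inequality $\vert f \vert \geq \vert P_n f \vert + (1/2)\vert f - P_n f \vert$ becomes
$$ \tfrac{1}{2} B + \sqrt{a^2 + b^2} \geq a + \tfrac{1}{2} b. $$

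The main point is then the estimate $B \geq b$, which is the standard comparison between the $\ell^1$ and $\ell^2$ norms applied to the finite sequence $(\mu_k)_{k > n}$. Together with the trivial bound $\sqrt{a^2 + b^2} \geq a$, adding the inequalities $\tfrac{1}{2} B \geq \tfrac{1}{2} b$ and $\sqrt{a^2 + b^2} \geq a$ yields exactly what we need. There is no real obstacle here; the content of the claim is just that the $\ell^1$-piece of the norm $\vert \cdot \vert$ alone already gives a splitting with constant $1$, and the $\ell^1$-domination over $\ell^2$ on the tail absorbs the $(1/2)b$ loss coming from the non-monotone $\ell^2$-piece.
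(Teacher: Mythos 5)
Your proof is correct and takes essentially the same approach as the paper: after cancelling the $\ell^1$-part $A$ supported on the first $n$ coordinates, everything reduces to the two elementary inequalities $\sqrt{a^2+b^2}\ge a$ and $\sum_{k>n}|\mu_k|\ge\bigl(\sum_{k>n}\mu_k^2\bigr)^{1/2}$, which is exactly what the paper uses.
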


\begin{proof}
For $ f = \sum _{k \in \mathbb{N}} \mu _{k} f_{k} \in c_{00}(\mathbb{N}) $, we compute
\begin{eqnarray*}
\vert f \vert & \geq & \sum _{k=1}^{n} \vert \mu _{k} \vert + \Big( \sum _{k=1}^{n} \mu _{k}^{2} \Big) ^{1/2} + \sum _{k=n+1}^{\infty } \vert \mu _{k} \vert \\
 & = & \vert P_{n}f \vert + \sum _{k=n+1}^{\infty } \vert \mu _{k} \vert \\
 & = & \vert P_{n}f \vert + \frac{1}{2} \sum _{k=n+1}^{\infty } \vert \mu _{k} \vert + \frac{1}{2} \sum _{k=n+1}^{\infty } \vert \mu _{k} \vert \\
 & \geq & \vert P_{n}f \vert + \frac{1}{2} \sum _{k=n+1}^{\infty } \vert \mu _{k} \vert + \frac{1}{2} \Big( \sum _{k=n+1}^{\infty } \mu _{k}^{2} \Big) ^{1/2} \\
 & = & \vert P_{n}f \vert + \frac{1}{2} \vert f - P_{n}f \vert .
\end{eqnarray*}
\end{proof}

\begin{claim} \label{clmni}
There are norms $ \Vert \cdot \Vert _{i}, i = 0, 1, 2, \dots , $ on $ c_{00}(\mathbb{N}) $ such that $ \Vert \cdot \Vert _{0} \geq \Vert \cdot \Vert _{1} \geq \Vert \cdot \Vert _{2} \geq \dots $ and, for every $ i \in \mathbb{N} \cup \{ 0 \} $,

{\rm (a)} $ \{ f_{1}, f_{2}, \dots \} $ is a monotone basis with respect to $ \Vert \cdot \Vert _{i} $,

{\rm (b)} $ \Vert f_{n} \Vert _{i} \leq 4/3 $ and $ \Vert f_{n}^{*} \Vert _{i} \leq 2(1 - 4^{-i-1}) $ for $ n \in \mathbb{N} $,

{\rm (c)} $ \Vert U_{n}x \Vert _{i} \leq \Vert x \Vert _{X}, x \in X, n \in \mathbb{N} $,

{\rm (d)} $ \Vert TP_{n}f \Vert _{X} \leq (1 - 4^{-\max \{ n, i+1 \} }) \Vert f \Vert _{i}, f \in c_{00}(\mathbb{N}), n \in \mathbb{N} $,

{\rm (e)} for every $ f \in c_{00}(\mathbb{N}) $ and every $ 1 \leq n \leq i $, we have
$$ \Vert f \Vert _{i} \geq \Vert P_{n}f \Vert _{i} + 4^{-n-1} \Vert f - P_{n}f \Vert _{i}, $$

{\rm (f)} if $ i \geq 1 $, then, for an $ \varepsilon _{i} > 0 $ and $ d_{i} $ defined by
$$ d_{i} = \frac{1 - 4^{-i}}{1 - 4^{-i-1}}, $$
we have $ \Vert \cdot \Vert _{i} = d_{i}\Vert \cdot \Vert _{i-1} + \varepsilon _{i}\vert \cdot \vert $ on $ \mathrm{span} \{ f_{1}, f_{2}, \dots , f_{i} \} $.
\end{claim}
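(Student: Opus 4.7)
The plan is to construct the norms $\Vert \cdot \Vert_i$ recursively on $i$, starting from $\Vert \cdot \Vert_0$ furnished by Claim \ref{clmn0} under the canonical identification $\mathbb{D} \leftrightarrow \mathbb{N}$. For $i=0$, properties (a)--(d) follow from Claim \ref{clmn0}: since the $\mathbb{D}$-first-coordinate $n_{\mathbb{D}}(m)$ of $m\in\mathbb{N}$ satisfies $n_{\mathbb{D}}(m) \leq m$, the bound $(1-4^{-n_{\mathbb{D}}(m)}) \leq (1-4^{-m})$ shows that Claim \ref{clmn0}(d) implies (d) as stated here; (e) and (f) are vacuous.

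For the inductive step $i-1\to i$, I propose
$$\Vert f\Vert_i := \max\bigl\{A_i(f),\; B_i(f),\; C_i(f)\bigr\}$$
with $A_i(f) = d_i\Vert f\Vert_{i-1} + \varepsilon_i |P_i f|$, $B_i(f) = \sup_{n\geq i+1}\Vert TP_nf\Vert_X/(1-4^{-n})$, $C_i(f) = A_i(P_if) + 4^{-i-1}\Vert f - P_if\Vert_{i-1}$, and $\varepsilon_i > 0$ chosen small. These three summands target distinct obligations: $A_i$ realizes (f) and provides the lower bound $\Vert\cdot\Vert_i\geq d_i\Vert\cdot\Vert_{i-1}$ needed for (b); $B_i$ enforces (d) for $n\geq i+1$; and $C_i$ is tailor-made for (e) at the new index $n=i$. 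Verification of the easy properties is then straightforward: on $\mathrm{span}\{f_1,\ldots,f_i\}$ all three pieces collapse to $d_i\Vert f\Vert_{i-1}+\varepsilon_i|f|$ (using (d) at $i-1$ to bound $B_i$), yielding (f); the bound $\Vert\cdot\Vert_i\leq\Vert\cdot\Vert_{i-1}$ is verified term-wise, using the elementary inequality $d_i+2\cdot 4^{-i-1}<1$ for $C_i$; (a) and (b) follow from the telescoping product $\prod_{j\leq i} d_j=(3/4)/(1-4^{-i-1})$; (c) combines the polynomial bound $|P_iU_Nx|=O(i\Vert x\Vert_X)$ with Claim \ref{cltu}(i) to control $B_i(U_N x)$; and (d) splits at $n=i+1$ into the $A_i$-regime (via induction) and $B_i$-regime (by construction).

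The main obstacle is (e). For $n = i$, the $C_i$ piece handles it by construction: (d) at $i-1$ applied to $P_if$ shows $B_i(P_if),\;C_i(P_if)\leq A_i(P_if)$, giving $\Vert P_if\Vert_i = A_i(P_if)$; meanwhile (d) at $i-1$ applied to $f-P_if$ gives $B_i(f-P_if) \leq \Vert f-P_if\Vert_{i-1}$, so $\Vert f-P_if\Vert_i \leq \Vert f-P_if\Vert_{i-1}$, and combining yields $\Vert P_if\Vert_i + 4^{-i-1}\Vert f-P_if\Vert_i \leq C_i(f) \leq \Vert f\Vert_i$. For $n<i$ one performs a case analysis based on which of $A_i,B_i,C_i$ attains the maximum in $\Vert f-P_nf\Vert_i$: the $A_i$-case is settled by the induction hypothesis (e) at $i-1$ combined with Claim \ref{clstrmon} (using $4^{-n-1}\leq 1/2$); the $B_i$- and $C_i$-cases exploit the bound $B_i(f-P_nf)\leq\Vert f-P_nf\Vert_{i-1}$ from (d) at $i-1$ together with (e) at $i-1$ applied to $P_if$, and hinge on a precise choice of $\varepsilon_i$ matching the slack $|P_if|-|P_nf|\geq (1/2)|P_if-P_nf|$ from Claim \ref{clstrmon}. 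This case analysis is where the bulk of the technical work lies.
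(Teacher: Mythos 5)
Your construction takes a genuinely different route from the paper's. The paper defines $\Vert\cdot\Vert_i$ on $F_i=\mathrm{span}\{f_1,\ldots,f_i\}$ by the formula in (f), and then extends it to $c_{00}(\mathbb{N})$ by declaring $B_{(c_{00}(\mathbb{N}),\Vert\cdot\Vert_i)}=\mathrm{co}\bigl(B_{(c_{00}(\mathbb{N}),\Vert\cdot\Vert_{i-1})}\cup B_{(F_i,\Vert\cdot\Vert_i)}\bigr)$. The point of this convex-hull construction is an extremality property: $\Vert\cdot\Vert_i$ is the largest norm that is $\le\Vert\cdot\Vert_{i-1}$ on $c_{00}(\mathbb{N})$ and $\le$ the prescribed $F_i$-norm on $F_i$, so any seminorm bounded by both generators is automatically bounded by $\Vert\cdot\Vert_i$. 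Since the quantities to be dominated in (d) and (e) are seminorms, each reduces to two easy checks, uniformly in $n$. Your $\Vert\cdot\Vert_i=\max\{A_i,B_i,C_i\}$ lies below the paper's norm (each of $A_i,B_i,C_i$ is $\le\Vert\cdot\Vert_{i-1}$ on $c_{00}(\mathbb{N})$ and $\le$ the prescribed norm on $F_i$), but it lacks the extremality, so (d) and (e) must be argued by hand. Your verification of (a)--(d), (f), and (e) at $n=i$ does go through along the lines you sketch.

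However, (e) at $n<i$ has a genuine gap. Take $f$ with $P_if=P_nf$ (the coordinates $n+1,\ldots,i$ of $f$ vanish) and set $h=f-P_nf=f-P_if$, so that $P_ih=0$. The target is $\Vert f\Vert_i\ge A_i(P_nf)+4^{-n-1}\Vert h\Vert_i$. Using $A_i(f)$, (e) at level $i-1$, and $|P_if|=|P_nf|$, one gets
$$A_i(f)=d_i\Vert f\Vert_{i-1}+\varepsilon_i|P_nf|\ge A_i(P_nf)+d_i\,4^{-n-1}\Vert h\Vert_{i-1},$$
which reaches the target only when $\Vert h\Vert_i\le d_i\Vert h\Vert_{i-1}$. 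But $\Vert h\Vert_i=\max\{d_i\Vert h\Vert_{i-1},\,B_i(h),\,4^{-i-1}\Vert h\Vert_{i-1}\}$, and nothing in your ingredients bounds $B_i(h)$ by $d_i\Vert h\Vert_{i-1}$; it can be as large as $\Vert h\Vert_{i-1}$. The slack you propose to exploit, $|P_if|-|P_nf|\ge\frac12|P_if-P_nf|$, is literally zero here, so no choice of $\varepsilon_i$ closes the gap through $A_i(f)$; the deficit is proportional to $\Vert f-P_nf\Vert_{i-1}$, which lives on coordinates $\ge i+1$ and is invisible to $|P_if|-|P_nf|$. Neither $B_i(f)$ nor $C_i(f)$ rescues the estimate either: $C_i(f)=A_i(P_nf)+4^{-i-1}\Vert h\Vert_{i-1}$, and since $4^{-i-1}<4^{-n-1}d_i\le 4^{-n-1}\Vert h\Vert_i/\Vert h\Vert_{i-1}$ (using $n<i$ and $d_i>1/4$) one has $C_i(f)<A_i(P_nf)+4^{-n-1}\Vert h\Vert_i$; and no useful lower bound on $B_i(f)$ is available. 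In short, your $C_i$ only hardwires the single instance $n=i$ of (e), and the instances $n<i$ are not inherited by the max. Replacing $C_i$ by $\sup_{1\le m\le i}\bigl(A_i(P_mf)+4^{-m-1}\Vert f-P_mf\Vert_{i-1}\bigr)$ would make (e) immediate, but then property (f) is no longer clear (one would need $C_i(a)\le A_i(a)$ for $a\in F_i$, which this modification does not give for free). The paper's convex-hull extension sidesteps both difficulties at once.
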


\begin{proof}
We already have $ \Vert \cdot \Vert _{0} $ from Claim \ref{clmn0} (we just realize that, concerning (d), if $ n \in \mathbb{N} $ corresponds to $ (N,K) \in \mathbb{D} $, then clearly $ N \leq n $).

Assume that $ i \in \mathbb{N} $ and that $ \Vert \cdot \Vert _{i-1} $ is constructed. Denote
$$ F_{i} = \mathrm{span} \{ f_{1}, f_{2}, \dots , f_{i} \} $$
and choose a small enough $ \varepsilon _{i} > 0 $ so that
$$ \varepsilon _{i} \vert a \vert \leq 4^{-i-1} \Vert a \Vert _{i-1}, \quad a \in F_{i}. $$
We put first
\begin{equation} \label{clmni1}
\Vert a \Vert _{i} = d_{i}\Vert a \Vert _{i-1} + \varepsilon _{i}\vert a \vert , \quad a \in F_{i}.
\end{equation}
Let $ \Vert \cdot \Vert _{i} $ be given by
\begin{equation} \label{clmni2}
B_{(c_{00}(\mathbb{N}), \Vert \cdot \Vert _{i})} = \mathrm{co} (B_{(c_{00}(\mathbb{N}), \Vert \cdot \Vert _{i-1})} \cup B_{(F_{i}, \Vert \cdot \Vert _{i})}).
\end{equation}
We need to show that the norm $ \Vert \cdot \Vert _{i} $ given by (\ref{clmni1}) satisfies $ \Vert a \Vert _{i} \leq \Vert a \Vert _{i-1} $, and so that (\ref{clmni2}) preserves $ \Vert \cdot \Vert _{i} $ where it has been already defined. We show that even
\begin{equation} \label{clmni3}
\Vert a \Vert _{i} \leq (1 - 2 \cdot 4^{-i-1}) \Vert a \Vert _{i-1}, \quad a \in F_{i}.
\end{equation}
For $ a \in F_{i} $, we write $ \Vert a \Vert _{i} = d_{i}\Vert a \Vert _{i-1} + \varepsilon _{i}\vert a \vert \leq (1 - 3 \cdot 4^{-i-1}) \Vert a \Vert _{i-1} + 4^{-i-1} \Vert a \Vert _{i-1} = (1 - 2 \cdot 4^{-i-1}) \Vert a \Vert _{i-1}. $

We obtain from (\ref{clmni1}) and (\ref{clmni2}) that
\begin{equation} \label{clmni4}
d_{i} \Vert f \Vert _{i-1} \leq \Vert f \Vert _{i} \leq \Vert f \Vert _{i-1}, \quad f \in c_{00}(\mathbb{N}).
\end{equation}
We now check that (a)--(f) are satisfied for $ \Vert \cdot \Vert _{i} $.

(a) We know that $ \{ f_{1}, f_{2}, \dots \} $ is a monotone basis of $ (c_{00}(\mathbb{N}), \Vert \cdot \Vert _{i-1}) $ and that $ \{ f_{1}, f_{2}, \dots , f_{i} \} $ is a monotone basis of $ (F_{i}, \Vert \cdot \Vert _{i}) $ (by (\ref{clmni1})). This means that the balls $ B_{(c_{00}(\mathbb{N}), \Vert \cdot \Vert _{i-1})} $ and $ B_{(F_{i}, \Vert \cdot \Vert _{i})} $ have the property that, if they contain $ f $, then they contain $ P_{n}f $ for every $ n \in \mathbb{N} $. The ball $ B_{(c_{00}(\mathbb{N}), \Vert \cdot \Vert _{i})} $ has the same property (due to its definition (\ref{clmni2})), and so $ \{ f_{1}, f_{2}, \dots \} $ is a monotone basis of $ (c_{00}(\mathbb{N}), \Vert \cdot \Vert _{i}) $.

(b) By (\ref{clmni4}), we have $ \Vert f_{n} \Vert _{i} \leq \Vert f_{n} \Vert _{i-1} \leq 4/3 $ and $ \Vert f_{n}^{*} \Vert _{i} \leq d_{i}^{-1} \Vert f_{n}^{*} \Vert _{i-1} \leq d_{i}^{-1} \cdot 2(1 - 4^{-(i-1)-1}) = 2(1 - 4^{-i-1}) $.

(c) By (\ref{clmni4}), we have $ \Vert U_{n}x \Vert _{i} \leq \Vert U_{n}x \Vert _{i-1} \leq \Vert x \Vert _{X} $.

(d) Let $ n \in \mathbb{N} $. Since $ f \mapsto \Vert TP_{n}f \Vert _{X} $ is a seminorm, it is sufficient (by (\ref{clmni2})) to check that
\begin{equation} \label{clmni5}
\Vert TP_{n}f \Vert _{X} \leq (1 - 4^{-\max \{ n, i+1 \} }) \Vert f \Vert _{i-1}, \quad f \in c_{00}(\mathbb{N}),
\end{equation}
\begin{equation} \label{clmni6}
\Vert TP_{n}a \Vert _{X} \leq (1 - 4^{-\max \{ n, i+1 \} }) \Vert a \Vert _{i}, \quad a \in F_{i}.
\end{equation}
The inequality (\ref{clmni5}) follows immediately from property (d) for $ \Vert \cdot \Vert _{i-1} $. To check the inequality (\ref{clmni6}), we consider two cases. Assume first that $ n \leq i $. Using (\ref{clmni4}), we write
\begin{eqnarray*}
\Vert TP_{n}a \Vert _{X} & \leq & (1 - 4^{-\max \{ n, (i-1)+1 \} }) \Vert a \Vert _{i-1} \\
 & \leq & (1 - 4^{-i}) d_{i}^{-1} \Vert a \Vert _{i} \\
 & = & (1 - 4^{-i-1}) \Vert a \Vert _{i} \\
 & = & (1 - 4^{-\max \{ n, i+1 \} }) \Vert a \Vert _{i},
\end{eqnarray*}
and (\ref{clmni6}) is checked. We have shown in particular that
$$ \Vert Ta \Vert _{X} = \Vert TP_{i}a \Vert _{X} \leq (1 - 4^{-(i+1)}) \Vert a \Vert _{i}. $$
Assume now that $ n > i $. We write
$$ \Vert TP_{n}a \Vert _{X} = \Vert Ta \Vert _{X} \leq (1 - 4^{-(i+1)}) \Vert a \Vert _{i} \leq (1 - 4^{-\max \{ n, i+1 \} }) \Vert a \Vert _{i}. $$

(e) Let $ 1 \leq n \leq i $. Since $ f \mapsto \Vert P_{n}f \Vert _{i} + 4^{-n-1} \Vert f - P_{n}f \Vert _{i} $ is a seminorm, it is sufficient (by (\ref{clmni2})) to check that
\begin{equation} \label{clmni7}
\Vert P_{n}f \Vert _{i} + 4^{-n-1} \Vert f - P_{n}f \Vert _{i} \leq \Vert f \Vert _{i-1}, \quad f \in c_{00}(\mathbb{N}),
\end{equation}
\begin{equation} \label{clmni8}
\Vert P_{n}a \Vert _{i} + 4^{-n-1} \Vert a - P_{n}a \Vert _{i} \leq \Vert a \Vert _{i}, \quad a \in F_{i}.
\end{equation}
If $ n < i $, then we write (using (\ref{clmni1}), (\ref{clmni4}), Claim \ref{clstrmon} and property (e) for $ \Vert \cdot \Vert _{i-1} $)
$$ \Vert P_{n}f \Vert _{i} + 4^{-n-1} \Vert f - P_{n}f \Vert _{i} \leq \Vert P_{n}f \Vert _{i-1} + 4^{-n-1} \Vert f - P_{n}f \Vert _{i-1} \leq \Vert f \Vert _{i-1} $$
and
\begin{eqnarray*}
\Vert a \Vert _{i} & = & d_{i}\Vert a \Vert _{i-1} + \varepsilon _{i}\vert a \vert \\
 & \geq & d_{i} \big( \Vert P_{n}a \Vert _{i-1} + 4^{-n-1} \Vert a - P_{n}a \Vert _{i-1} \big) + \varepsilon _{i} \big( \vert P_{n}a \vert + 4^{-n-1} \vert a - P_{n}a \vert \big) \\
 & = & \Vert P_{n}a \Vert _{i} + 4^{-n-1} \Vert a - P_{n}a \Vert _{i}.
\end{eqnarray*}
If $ n = i $, then we write (using (\ref{clmni3}) and (\ref{clmni4}))
\begin{eqnarray*}
\Vert P_{i}f \Vert _{i} + 4^{-i-1} \Vert f - P_{i}f \Vert _{i} & \leq & (1 - 2 \cdot 4^{-i-1}) \Vert P_{i}f \Vert _{i-1} + 4^{-i-1} (\Vert f \Vert _{i} + \Vert P_{i}f \Vert _{i}) \\
& \leq & (1 - 2 \cdot 4^{-i-1}) \Vert f \Vert _{i-1} + 4^{-i-1} (\Vert f \Vert _{i-1} + \Vert f \Vert _{i-1}) \\
& = & \Vert f \Vert _{i-1}
\end{eqnarray*}
and
$$ \Vert P_{i}a \Vert _{i} + 4^{-i-1} \Vert a - P_{i}a \Vert _{i} = \Vert a \Vert _{i} + 4^{-i-1} \Vert 0 \Vert _{i} = \Vert a \Vert _{i}. $$

(f) This follows immediately from (\ref{clmni1}).
\end{proof}

\begin{claim} \label{clmn}
There is a norm $ \Vert \cdot \Vert $ on $ c_{00}(\mathbb{N}) $ such that

{\rm (a)} $ \{ f_{1}, f_{2}, \dots \} $ is a monotone basis with respect to $ \Vert \cdot \Vert $,

{\rm (b)} $ \Vert f_{n} \Vert \leq 4/3 $ and $ \Vert f_{n}^{*} \Vert \leq 2 $ for $ n \in \mathbb{N} $,

{\rm (c)} $ \Vert U_{n}x \Vert \leq \Vert x \Vert _{X}, x \in X, n \in \mathbb{N} $,

{\rm (d)} $ \Vert Tf \Vert _{X} \leq \Vert f \Vert , f \in c_{00}(\mathbb{N}) $,

{\rm (e)} for every $ f \in c_{00}(\mathbb{N}) $ and every $ n \in \mathbb{N} $, we have
$$ \Vert f \Vert \geq \Vert P_{n}f \Vert + 4^{-n-1} \Vert f - P_{n}f \Vert , $$

{\rm (f)} $ \Vert \cdot \Vert $ is strictly convex on $ c_{00}(\mathbb{N}) $.
\end{claim}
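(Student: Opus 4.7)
The plan is to define
\[
\Vert f \Vert = \lim _{i \to \infty } \Vert f \Vert _{i}, \quad f \in c_{00}(\mathbb{N}),
\]
noting that the sequence $ (\Vert f \Vert _{i}) $ is non-increasing by Claim~\ref{clmni} and bounded below by $ \vert f_{n}^{*}(f) \vert /2 $ (using $ \Vert f_{n}^{*} \Vert _{i} \leq 2 $). For any $ f \neq 0 $ there is $ n $ with $ f_{n}^{*}(f) \neq 0 $, giving positivity; subadditivity and homogeneity pass through the limit, so $ \Vert \cdot \Vert $ is a norm on $ c_{00}(\mathbb{N}) $.

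For (a)--(e), I would pass to the limit in the corresponding item of Claim~\ref{clmni}. For~(a), take the limit in $ \Vert P_{n}f \Vert _{i} \leq \Vert f \Vert _{i} $. For~(b), $ \Vert f_{n} \Vert \leq \Vert f_{n} \Vert _{0} \leq 4/3 $, while $ \vert f_{n}^{*}(f) \vert \leq 2(1-4^{-i-1}) \Vert f \Vert _{i} $ yields $ \Vert f_{n}^{*} \Vert \leq 2 $ after letting $ i \to \infty $. For~(c), take the limit in $ \Vert U_{n}x \Vert _{i} \leq \Vert x \Vert _{X} $. For~(d), fix $ f \in c_{00}(\mathbb{N}) $ and $ n $ with $ P_{n}f = f $; then $ \Vert Tf \Vert _{X} \leq (1 - 4^{-\max \{n,i+1\}}) \Vert f \Vert _{i} $, so letting $ i \to \infty $ and then $ n \to \infty $ finishes. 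For~(e), fix $ n $, take $ i \geq n $, and pass to the limit in Claim~\ref{clmni}(e).

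The main obstacle is strict convexity~(f). Here the decisive step is to derive an explicit formula for the limit norm on each $ F_{i} = \mathrm{span}\{f_{1}, \dots , f_{i}\} $. Iterating the recursion in Claim~\ref{clmni}(f), for $ f \in F_{i} $ and $ j > i $,
\[
\Vert f \Vert _{j} = \Big( \prod _{k=i+1}^{j} d_{k} \Big) \Vert f \Vert _{i} + \Big( \sum _{k=i+1}^{j} \varepsilon _{k} \prod _{l=k+1}^{j} d_{l} \Big) \vert f \vert .
\]
The telescoping identity $ \prod _{k=1}^{j} d_{k} = (3/4)/(1 - 4^{-j-1}) $ yields $ \prod _{k=i+1}^{j} d_{k} \to 1 - 4^{-i-1} $. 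Since $ \Vert f \Vert _{j} $ converges and the right side is affine in $ (\Vert f \Vert _{i}, \vert f \vert ) $ with coefficients independent of $ f \in F_{i} $, the second coefficient tends to some $ B_{i} \geq 0 $; the bound $ \prod _{l=k+1}^{j} d_{l} \geq 1 - 4^{-k-1} $ moreover forces $ B_{i} \geq \varepsilon _{i+1}(1 - 4^{-i-2}) > 0 $. Hence
\[
\Vert f \Vert = (1 - 4^{-i-1}) \Vert f \Vert _{i} + B_{i} \vert f \vert , \quad f \in F_{i}, \; B_{i} > 0.
\]

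To conclude, suppose $ f, g \in c_{00}(\mathbb{N}) $ satisfy $ \Vert f + g \Vert = \Vert f \Vert + \Vert g \Vert $ and pick $ i $ with $ f, g \in F_{i} $. Applying the displayed formula together with the triangle inequalities for $ \Vert \cdot \Vert _{i} $ and $ \vert \cdot \vert $ separately, the assumed equality forces $ \vert f + g \vert = \vert f \vert + \vert g \vert $. Since $ \vert f \vert = \sum _{k} \vert \mu _{k} \vert + (\sum _{k} \mu _{k}^{2})^{1/2} $ is a sum of two norms whose $ \ell ^{2} $ summand is strictly convex, $ f $ and $ g $ must be linearly dependent, establishing strict convexity of $ \Vert \cdot \Vert $ on $ c_{00}(\mathbb{N}) $.
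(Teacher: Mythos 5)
Your proposal is correct and follows essentially the same route as the paper: define $\Vert\cdot\Vert$ as the decreasing limit of the $\Vert\cdot\Vert_i$, obtain an affine decomposition $\Vert\cdot\Vert = \alpha\Vert\cdot\Vert_{i}+\beta\vert\cdot\vert$ on each finite-dimensional $F_i$ from Claim~\ref{clmni}(f), and deduce strict convexity from $\beta>0$ and the strict convexity of $\vert\cdot\vert$. The only (cosmetic) difference is that the paper proves the coefficient of $\vert\cdot\vert$ is positive by a short induction giving $\beta\ge(1-4^{-n-1})\varepsilon_n$, while you compute the telescoping product $\prod d_k$ explicitly and bound $B_i$ by its first term; both are immediate.
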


\begin{proof}
We take the norms $ \Vert \cdot \Vert _{i}, i = 0, 1, 2, \dots , $ from Claim \ref{clmni} and define
$$ \Vert f \Vert = \lim _{i \rightarrow \infty } \Vert f \Vert _{i}, \quad f \in c_{00}(\mathbb{N}). $$
For this norm, (a)--(e) can be easily verified. Let us verify (f). It is sufficient to show that, for a fixed $ n \in \mathbb{N} $, the norm is strictly convex on
$$ F_{n} = \mathrm{span} \{ f_{1}, f_{2}, \dots , f_{n} \} . $$
By property (f) from Claim \ref{clmni}, we have
$$ \Vert \cdot \Vert _{i} = \alpha _{i} \Vert \cdot \Vert _{n-1} + \beta _{i} \vert \cdot \vert \quad \textrm{on $ F_{n} $}, \quad \quad i \geq n - 1, $$
where
$$ \alpha _{n-1} = 1, \quad \beta _{n-1} = 0, $$
$$ \alpha _{i} = d_{i} \alpha _{i-1}, \quad \beta _{i} = d_{i} \beta _{i-1} + \varepsilon _{i}, \quad i \geq n, $$
for a sequence $ \varepsilon _{n}, \varepsilon _{n+1}, \dots $ of positive numbers. We obtain that
\begin{equation} \label{clmn1}
\Vert \cdot \Vert = \alpha \Vert \cdot \Vert _{n-1} + \beta \vert \cdot \vert \quad \textrm{on $ F_{n} $}
\end{equation}
where
$$ \alpha = \lim _{i \rightarrow \infty } \alpha _{i}, \quad \beta = \lim _{i \rightarrow \infty } \beta _{i}. $$
It is easy to prove by induction that
$$ \beta _{i} \geq \frac{1 - 4^{-n-1}}{1 - 4^{-i-1}} \varepsilon _{n}, \quad i \geq n. $$
Indeed, we can compute
$$ \beta _{n} = d_{n} \beta _{n-1} + \varepsilon _{n} = \varepsilon _{n} = \frac{1 - 4^{-n-1}}{1 - 4^{-n-1}} \varepsilon _{n}, $$
$$ \beta _{i} = d_{i} \beta _{i-1} + \varepsilon _{i} \geq d_{i} \frac{1 - 4^{-n-1}}{1 - 4^{-(i-1)-1}} \varepsilon _{n} = \frac{1 - 4^{-n-1}}{1 - 4^{-i-1}} \varepsilon _{n}, \quad \quad i \geq n + 1. $$
Hence $ \beta \geq (1 - 4^{-n-1}) \varepsilon _{n} > 0 $. Now, since $ \vert \cdot \vert $ is strictly convex, it follows from (\ref{clmn1}) that $ \Vert \cdot \Vert $ is strictly convex on $ F_{n} $.
\end{proof}

\begin{proof}[Proof of Proposition \ref{propbrabs}]
We define $ F $ as the completion of $ c_{00}(\mathbb{N}) $ endowed with the norm $ \Vert \cdot \Vert $ from Claim \ref{clmn}. To prove Proposition \ref{propbrabs}, it remains to show properties (3) and (4). Let us show (3). By Claim \ref{cltu}(iii) and property (b) from Claim \ref{clmn}, we can define
$$ Ux = \lim _{n \rightarrow \infty } U_{n}x, \quad x \in \mathrm{span} \{ e_{1}, e_{2}, \dots \} . $$
Let $ \widehat{U} : X \rightarrow F $ be the continuous extension of $ U : \mathrm{span} \{ e_{1}, e_{2}, \dots \} \rightarrow F $ and $ \widehat{T} : F \rightarrow X $ be the continuous extension of $ T : c_{00}(\mathbb{N}) \rightarrow X $. These extensions exist by properties (c) and (d) from Claim \ref{clmn}. Moreover,
$$ \Vert \widehat{U}x \Vert \leq \Vert x \Vert _{X}, \quad x \in X, $$
$$ \Vert \widehat{T}f \Vert _{X} \leq \Vert f \Vert , \quad f \in F. $$
For $ x \in \mathrm{span} \{ e_{1}, e_{2}, \dots \} $, we can write, using Claim \ref{cltu}(ii),
$$ \widehat{T}Ux = \widehat{T}\Big( \lim _{n \rightarrow \infty } U_{n}x \Big) = \lim _{n \rightarrow \infty } TU_{n}x = x. $$
It follows that, for every $ x \in X $,
$$ \widehat{T}\widehat{U}x = x \quad \textrm{and} \quad \Vert \widehat{U}x \Vert = \Vert x \Vert _{X} $$
(since $ \Vert \widehat{U}x \Vert \leq \Vert x \Vert _{X} = \Vert \widehat{T}\widehat{U}x \Vert _{X} \leq \Vert \widehat{U}x \Vert $). Now, (3) follows, as $ \widehat{U}X $ is an isometric copy of $ X $ and $ \widehat{U}\widehat{T} : F \rightarrow F $ is a projection on $ \widehat{U}X $ with $ \Vert \widehat{U}\widehat{T} \Vert \leq 1 $.

Let us show (4). We know that $ \widehat{T}\widehat{U} $ is the identity on $ X $. For $ x^{*} \in X^{*} $, we can write
$$ \widehat{U}^{*}\widehat{T}^{*}x^{*} = x^{*} \quad \textrm{and} \quad \Vert \widehat{T}^{*}x^{*} \Vert = \Vert x^{*} \Vert _{X} $$
(since $ \Vert \widehat{T}^{*}x^{*} \Vert \leq \Vert x^{*} \Vert _{X} = \Vert \widehat{U}^{*}\widehat{T}^{*}x^{*} \Vert _{X} \leq \Vert \widehat{T}^{*}x^{*} \Vert $). By (vi) and (v) from Claim \ref{cltu} and property (b) from Claim \ref{clmn}, we have
$$ \widehat{T}^{*} \mathrm{span} \{ e_{1}^{*}, e_{2}^{*}, \dots \} \subset \overline{\mathrm{span}} \{ f_{1}^{*}, f_{2}^{*}, \dots \} , $$
$$ \widehat{U}^{*} \mathrm{span} \{ f_{1}^{*}, f_{2}^{*}, \dots \} \subset \mathrm{span} \{ e_{1}^{*}, e_{2}^{*}, \dots \} . $$
It is clear that even $ \widehat{T}^{*}X' \subset F' $ and $ \widehat{U}^{*}F' \subset X' $ where $ X' $ denotes $ \overline{\mathrm{span}} \{ e_{1}^{*}, e_{2}^{*}, \dots \} $ and $ F' $ denotes $ \overline{\mathrm{span}} \{ f_{1}^{*}, f_{2}^{*}, \dots \} $. Now, (4) follows, as $ \widehat{T}^{*}X' $ is an isometric copy of $ X' $ and $ \widehat{T}^{*}\widehat{U}^{*} \vert _{F'} : F^{'} \rightarrow F^{'} $ is a projection on $ \widehat{T}^{*}X' $ with $ \Vert \widehat{T}^{*}\widehat{U}^{*} \vert _{F'} \Vert \leq 1 $.
\end{proof}

\begin{proposition} \label{propbr}
There is a Banach space $ (F, \Vert \cdot \Vert ) $ with a monotone basis $ \{ f_{1}, f_{2}, \dots \} $ and its dual basis $ \{ f_{1}^{*}, f_{2}^{*}, \dots \} $ such that:

{\rm (1)} If $ (P_{n})_{n=1}^{\infty } $ denotes the sequence of partial sum operators associated with the basis $ \{ f_{1}, f_{2}, \dots \} $, then
$$ \Vert f \Vert \geq \Vert P_{n}f \Vert + 4^{-n-1} \Vert f - P_{n}f \Vert , \quad f \in F, n \in \mathbb{N}. $$

{\rm (2)} The norm of $ F $ is strictly convex on the linear span of the basis vectors.

{\rm (3)} $ F $ is isometrically universal for all separable Banach spaces.

{\rm (4)} $ \overline{\mathrm{span}} \{ f_{1}^{*}, f_{2}^{*}, \dots \} $ is isometrically universal for all separable Banach spaces.
\end{proposition}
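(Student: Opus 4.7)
The plan is to apply Proposition~\ref{propbrabs} with an input space $X$ whose monotone basis $\{e_i\}$ has the property that both $X$ and $\overline{\mathrm{span}}\{e_i^*\}$ are isometrically universal for all separable Banach spaces. Granted such an $X$, properties (1) and (2) of the present proposition will be inherited verbatim from (1) and (2) of \ref{propbrabs}, while (3) and (4) will follow from (3) and (4) of \ref{propbrabs} combined with the universality of $X$ and of its dual basis span.

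To produce such an $X$, I will apply \ref{propbrabs} twice and then take an $\ell^{1}$-amalgamation. Start with a separable isometrically universal Banach space $U$ with a monotone basis (existence: \cite[p.~34]{diestel}, as noted in the earlier remark). Applying \ref{propbrabs} to $U$ yields a space $F_{0}$ with monotone basis $\{f_{i}^{(0)}\}$, and $F_{0} \supset U$ isometrically by (3) of \ref{propbrabs}, so $F_{0}$ is universal. Set $W_{0} := \overline{\mathrm{span}}\{f_{i}^{(0),*}\} \subset F_{0}^{*}$. Then $\{f_{i}^{(0),*}\}$ is a monotone basis of $W_{0}$, and $W_{0}$ is $1$-norming for $F_{0}$, via $\|f\|_{F_{0}} = \sup_{n} \|P_{n}f\|_{F_{0}}$ together with the observation that each $P_{n}f$ is normed on $\mathrm{span}\{f_{1}^{(0)},\dots,f_{n}^{(0)}\}$ by a functional which, after composition with $P_{n}$, lies in $\mathrm{span}\{f_{1}^{(0),*},\dots,f_{n}^{(0),*}\}$. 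Consequently the canonical evaluation map $F_{0} \hookrightarrow W_{0}^{*}$ is an isometry whose image is exactly the closed span of the dual basis of $\{f_{i}^{(0),*}\}$. A second application of \ref{propbrabs}, now to $W_{0}$, produces a space $F_{1}$ with monotone basis $\{f_{i}^{(1)}\}$, and (4) of \ref{propbrabs} gives $\overline{\mathrm{span}}\{f_{i}^{(1),*}\} \supset F_{0}$ isometrically; hence $\overline{\mathrm{span}}\{f_{i}^{(1),*}\}$ is itself isometrically universal.

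Now set $X := F_{0} \oplus_{1} F_{1}$ with the interleaved basis $\{f_{1}^{(0)}, f_{1}^{(1)}, f_{2}^{(0)}, f_{2}^{(1)}, \dots\}$, which is straightforwardly seen to be monotone for the $\ell^{1}$-sum. Then $X \supset F_{0}$ is universal, while $\overline{\mathrm{span}}\{e_{i}^{*}\}$ --- identifiable with $\overline{\mathrm{span}}\{f_{i}^{(0),*}\} \oplus_{\infty} \overline{\mathrm{span}}\{f_{i}^{(1),*}\}$ inside $F_{0}^{*} \oplus_{\infty} F_{1}^{*} = X^{*}$ --- contains its universal second summand isometrically, and is therefore itself universal. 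A final application of \ref{propbrabs} to $X$ produces the desired $F$.

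The main point I expect to require care is the $1$-norming observation for $W_{0}$ and the concomitant isometric identification of the dual-basis dual span of $W_{0}$ with $F_{0}$; this is the mechanism that transports the primal-side universality from one application of \ref{propbrabs} to the dual-basis side of the next, and is what makes the subsequent amalgamation bilaterally universal.
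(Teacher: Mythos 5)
Your proof is correct and turns on the same key observation as the paper's: for a monotone basis, the closed span of the dual basis is $1$-norming, so the closed span of the \emph{second} dual basis is canonically isometric to the original space. You then amalgamate with an $\ell^1$-sum, exactly as the paper does. However, your two preliminary applications of Proposition~\ref{propbrabs} --- producing $F_0$ from $U$ and $F_1$ from $W_0$ --- are unnecessary detours. The paper applies Proposition~\ref{propbrabs} only once: it takes $Y = C([0,1])$ with its monotone Schauder basis, sets $Z = \overline{\mathrm{span}}\{y_i^*\}$ with $z_i = y_i^*$, observes that $\overline{\mathrm{span}}\{z_i^*\}$ is isometric to $Y$ (this is precisely your $1$-norming observation applied to $Y$), and then $X = Y \oplus_1 Z$ with the interleaved basis already has both the primal space and the dual-basis span universal. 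In your proof, $W_0$ plays the role of the paper's $Z$, but wrapping it in an extra pass of Proposition~\ref{propbrabs} to produce $F_1$ buys nothing: $W_0$ itself already has universal dual-basis span (isometric to $F_0$), so it could serve directly as the second summand. Likewise, the initial pass producing $F_0$ from $U$ is redundant, since $U$ is already universal and has a monotone basis.
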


\begin{proof}
By Proposition \ref{propbrabs}, it remains to provide a Banach space $ (X, \Vert \cdot \Vert _{X}) $ with a monotone basis $ \{ e_{1}, e_{2}, \dots \} $ and its dual basis $ \{ e_{1}^{*}, e_{2}^{*}, \dots \} $ such that $ X $ and $ \overline{\mathrm{span}} \{ e_{1}^{*}, e_{2}^{*}, \dots \} $ are isometrically universal for all separable Banach spaces. We provide such a space in three easy steps.

(i) There is a Banach space $ (Y, \Vert \cdot \Vert _{Y}) $ with a monotone basis $ \{ y_{1}, y_{2}, \dots \} $ and its dual basis $ \{ y_{1}^{*}, y_{2}^{*}, \dots \} $ such that $ Y $ is isometrically universal for all separable Banach spaces. Indeed, we can take the universal space $ Y = C([0, 1]) $. It was shown by J.~Schauder that $ Y = C([0, 1]) $ has a monotone basis (see, e.g., \cite[p. 34]{diestel}).

(ii) There is a Banach space $ (Z, \Vert \cdot \Vert _{Z}) $ with a monotone basis $ \{ z_{1}, z_{2}, \dots \} $ and its dual basis $ \{ z_{1}^{*}, z_{2}^{*}, \dots \} $ such that $ \overline{\mathrm{span}} \{ z_{1}^{*}, z_{2}^{*}, \dots \} $ is isometrically universal for all separable Banach spaces. Indeed, we can consider $ Z = \overline{\mathrm{span}} \{ y_{1}^{*}, y_{2}^{*}, \dots \} , z_{i} = y_{i}^{*}, $ in which case $ \overline{\mathrm{span}} \{ z_{1}^{*}, z_{2}^{*}, \dots \} $ is isometric to $ Y = \overline{\mathrm{span}} \{ y_{1}, y_{2}, \dots \} $.

(iii) Finally, we put $ X = Y \oplus Z $ with the norm
$$ \Vert x \Vert _{X} = \Vert y \Vert _{Y} + \Vert z \Vert _{Z}, \quad x = y + z \in Y \oplus Z. $$
For the dual norm, we have
$$ \Vert x^{*} \Vert _{X} = \max\{ \Vert y^{*} \Vert _{Y}, \Vert z^{*} \Vert _{Z} \} , \quad x^{*} = y^{*} + z^{*} \in Y^{*} \oplus Z^{*}. $$
The sequence $ y_{1}, z_{1}, y_{2}, z_{2}, \dots $ forms a monotone basis of $ X $ and the sequence $ y_{1}^{*}, z_{1}^{*}, y_{2}^{*}, z_{2}^{*}, \dots $ forms its dual basis. The requirements on $ X $ can be easily verified.
\end{proof}

\section{Conclusion}

\begin{theorem} \label{thmtree}
There exists a Banach space $ (E, \Vert \cdot \Vert ) $ with a basis $ \{ e_{\eta } : \eta \in \mathbb{N}^{< \mathbb{N}} \} $ and its dual basis $ \{ e_{\eta }^{*} : \eta \in \mathbb{N}^{< \mathbb{N}} \} $ such that

{\rm (a)} if $ n_{1}, n_{2}, \dots $ is a sequence of natural numbers, then the spaces
$$ \overline{\mathrm{span}} \big\{ e_{n_{1}, \dots , n_{k}} : k \in \mathbb{N} \cup \{ 0 \} \big\} , $$
$$ \overline{\mathrm{span}} \big\{ e_{n_{1}, \dots , n_{k}}^{*} : k \in \mathbb{N} \cup \{ 0 \} \big\} $$
are isometrically universal for all separable Banach spaces,

{\rm (b)} if $ T $ is a non-empty well-founded tree, then the dual of
$$ \overline{\mathrm{span}} \big\{ e_{\eta } : \eta \in T \big\} $$
is Fr\'echet smooth,

{\rm (c)} the basis $ \{ e_{\eta } : \eta \in \mathbb{N}^{< \mathbb{N}} \} $ is monotone in the sense that, for every tree $ T $, the projection
$$ P_{T} : \sum _{\eta \in \mathbb{N}^{< \mathbb{N}}} r_{\eta } e_{\eta } \mapsto \sum _{\eta \in T} r_{\eta } e_{\eta } $$
fulfills $ \Vert P_{T} \Vert \leq 1 $.
\end{theorem}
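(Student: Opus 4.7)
The plan is to take the branch space $F$ supplied by Proposition \ref{propbr} and feed it into the tree construction of Proposition \ref{proptree} to obtain $E$. Properties (a) and (c) will follow rather directly from the conclusions already bundled into Proposition \ref{proptree}, together with the universality statements of Proposition \ref{propbr}. The substantive work lies in (b), which I will establish by transfinite induction on the rank of $\nu$ in the well-founded tree $T$, using Proposition \ref{propfrech} as the inductive engine.

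\textbf{Setup and the easy properties.} Take $(F, \Vert \cdot \Vert)$ from Proposition \ref{propbr} and apply Proposition \ref{proptree} to it to get $(E, \Vert \cdot \Vert)$. Property (c) is exactly (c) of Proposition \ref{proptree}. For (a), note that (a) of Proposition \ref{proptree} makes $e_{n_1, \dots, n_k} \mapsto f_{k+1}$ an isometry from the branch $B_\sigma = \overline{\mathrm{span}}\{e_{n_1, \dots, n_k} : k \in \mathbb{N} \cup \{0\}\}$ onto $F$, which is isometrically universal by Proposition \ref{propbr}(3). For the dual branch, the set of initial segments of $\sigma = (n_1, n_2, \dots)$ is itself a tree, hence the associated projection $P_\sigma$ onto $B_\sigma$ has norm one by (c). Its adjoint $P_\sigma^*$ is an isometric embedding of $B_\sigma^*$ into $E^*$ sending the dual basis of $B_\sigma$ to the functionals $e^*_{n_1, \dots, n_k}$; so $\overline{\mathrm{span}}\{e^*_{n_1, \dots, n_k}\}$ is isometric to $\overline{\mathrm{span}}\{f_k^*\}$, isometrically universal by Proposition \ref{propbr}(4).

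\textbf{Property (b).} Let $T$ be a non-empty well-founded tree. I prove by transfinite induction on the rank of $\nu \in T$ that the dual of $P_T E_\nu$ is Fr\'echet smooth; taking $\nu = \emptyset$ yields the claim, since $\overline{\mathrm{span}}\{e_\eta : \eta \in T\} = P_T E_\emptyset = P_T E$. In the base case $\nu$ is terminal in $T$, so $P_T E_\nu$ reduces to the finite-dimensional path space $\overline{\mathrm{span}}\{e_\eta : \eta \subset \nu\}$, which by Proposition \ref{proptree}(a) is isometric to $\mathrm{span}\{f_1, \dots, f_{l+1}\} \subset F$ and hence strictly convex by Proposition \ref{propbr}(2); in finite dimensions strict convexity forces the dual norm to be Fr\'echet smooth. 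For the inductive step, Proposition \ref{proptree}(b) together with Lemma \ref{lemproj}(B) yields
$$ P_T E_\nu = \Sigma(X_\nu \oplus P_T Y_k), $$
where $X_\nu = \overline{\mathrm{span}}\{e_\eta : \eta \subset \nu\}$ is the common path and $Y_k = \overline{\mathrm{span}}\{e_\eta : (n_1, \dots, n_l, k) \subset \eta\}$, so that $X_\nu \oplus P_T Y_k$ equals $P_T E_{(n_1, \dots, n_l, k)}$ when $(n_1, \dots, n_l, k) \in T$ and degenerates to $X_\nu$ when $(n_1, \dots, n_l, k) \notin T$. The dual of each summand is Fr\'echet smooth either by the inductive hypothesis (live branches) or by the base-case argument (dead branches). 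Proposition \ref{proptree}(d), combined with Proposition \ref{propbr}(1) applied at $n = l+1$, supplies a single positive constant $c_l = 4^{-l-2}$ for which the monotone lower estimate
$$ \Vert x + y_k \Vert \geq \Vert x \Vert_{X_\nu} + c_l \Vert y_k \Vert, \quad x \in X_\nu,\; y_k \in P_T Y_k, $$
holds uniformly in $k$. Proposition \ref{propfrech} then delivers Fr\'echet smoothness of the dual of the $\Sigma$-sum, completing the induction.

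\textbf{Main obstacle.} The delicate point is orchestrating the induction so that the hypotheses of Proposition \ref{propfrech} are satisfied at every step and uniformly in the child index $k$: one must treat dead branches on the same footing as live ones (Fr\'echet smooth dual in either case) and verify that the constant in the monotone decomposition does not depend on $k$. Both facts ultimately reduce to the observation that the bound in Proposition \ref{proptree}(d) depends only on the length $l = \vert \nu \vert$, which is precisely what Proposition \ref{propbr}(1) was designed to provide.
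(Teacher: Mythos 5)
Your proof is correct and follows essentially the same route as the paper: build $E$ from the branch space $F$ of Proposition \ref{propbr} via Proposition \ref{proptree}, derive (a) and (c) directly, and for (b) combine the decomposition $P_T E_\nu = \Sigma(P_T E_{\nu^\frown k})$ (Lemma \ref{lemproj}(B) with Proposition \ref{proptree}(b)), the uniform constant $4^{-l-2}$ from Proposition \ref{propbr}(1) and \ref{proptree}(d), and Proposition \ref{propfrech}, grounded in the finite-dimensional strictly convex base case. The only cosmetic difference is that you run (b) as a transfinite induction on tree rank, whereas the paper phrases the same argument contrapositively, constructing a ``failing'' sequence of nodes that well-foundedness forces out of $T$ into the finite-dimensional base case.
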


\begin{proof}
Let $ (F, \Vert \cdot \Vert _{F}) $ with a monotone basis $ \{ f_{1}, f_{2}, \dots \} $ and its dual basis $ \{ f_{1}^{*}, f_{2}^{*}, \dots \} $ be as in Proposition \ref{propbr}. Let $ (E, \Vert \cdot \Vert ) $ and $ \{ e_{\eta } : \eta \in \mathbb{N}^{< \mathbb{N}} \} $ be the objects which Proposition \ref{proptree} gives and let $ \{ e_{\eta }^{*} : \eta \in \mathbb{N}^{< \mathbb{N}} \} $ be the dual basis of $ \{ e_{\eta } : \eta \in \mathbb{N}^{< \mathbb{N}} \} $. It remains to prove (a) and (b), as our condition (c) coincides with condition (c) from Proposition \ref{proptree}.

Let us realize that it follows from (c) that
\begin{equation} \label{thmtree1}
\Vert e^{*} \Vert = \Vert e^{*} \vert _{P_{T}E} \Vert , \quad \quad e^{*} \in \overline{\mathrm{span}} \big\{ e_{\eta }^{*} : \eta \in T \big\} .
\end{equation}
Clearly $ \Vert e^{*} \Vert \geq \Vert e^{*} \vert _{P_{T}E} \Vert $. For every $ e \in E $ with $ \Vert e \Vert \leq 1 $, we have $ \Vert P_{T}e \Vert \leq 1 $, and so $ \vert e^{*}(e) \vert = \vert e^{*}(P_{T}e) \vert \leq \Vert e^{*} \vert _{P_{T}E} \Vert $. Thus $ \Vert e^{*} \Vert \leq \Vert e^{*} \vert _{P_{T}E} \Vert $.

For a sequence $ n_{1}, n_{2}, \dots $ of natural numbers, we have
\begin{equation} \label{thmtree2}
\Big\Vert \sum _{i=0}^{l} r_{i} e_{n_{1}, \dots , n_{i}} \Big\Vert = \Big\Vert \sum _{i=0}^{l} r_{i} f_{i+1} \Big\Vert _{F}, \quad \quad l \in \mathbb{N} \cup \{ 0 \} , \; r_{0}, r_{1}, \dots , r_{l} \in \mathbb{R},
\end{equation}
\begin{equation} \label{thmtree3}
\Big\Vert \sum _{i=0}^{l} r_{i} e_{n_{1}, \dots , n_{i}}^{*} \Big\Vert = \Big\Vert \sum _{i=0}^{l} r_{i} f_{i+1}^{*} \Big\Vert _{F}, \quad \quad l \in \mathbb{N} \cup \{ 0 \} , \; r_{0}, r_{1}, \dots , r_{l} \in \mathbb{R}.
\end{equation}
Indeed, (\ref{thmtree2}) is nothing else than (a) from Proposition \ref{proptree} and (\ref{thmtree3}) follows from (\ref{thmtree1}) applied on $ T = \{ (n_{1}, \dots , n_{k}) : k \in \mathbb{N} \cup \{ 0 \} \} $.

Hence, the spaces $ \overline{\mathrm{span}} \{ e_{n_{1}, \dots , n_{k}} : k \in \mathbb{N} \cup \{ 0 \} \} $ and $ \overline{\mathrm{span}} \{ e_{n_{1}, \dots , n_{k}}^{*} : k \in \mathbb{N} \cup \{ 0 \} \} $ are isometric to $ F $ and $ \overline{\mathrm{span}} \{ f_{1}^{*}, f_{2}^{*}, \dots \} $ which are universal due to (3) and (4) from Proposition \ref{propbr}. This proves (a).

Let us prove (b). Assume that (b) does not hold for a non-empty well-founded tree $ T $. It means that the dual of
$$ P_{T}E = \overline{\mathrm{span}} \big\{ e_{\eta } : \eta \in T \big\} $$
is not Fr\'echet smooth.

Let $ (n_{1}, \dots , n_{l}) \in \mathbb{N}^{< \mathbb{N}} $. By Lemma \ref{lemproj}(B) and condition (b) from Proposition \ref{proptree}, we have
\begin{equation} \label{thmtree4}
P_{T}E_{n_{1}, \dots , n_{l}} = \Sigma (P_{T}E_{n_{1}, \dots , n_{l}, k})
\end{equation}
where
$$ E_{\nu } = \overline{\mathrm{span}} \{ e_{\eta } : \eta \subset \nu \textrm{ or } \nu \subset \eta \} . $$
By conditions (1) from Proposition \ref{propbr} and (d) from Proposition \ref{proptree}, we have
$$ \Big\Vert \sum _{\eta \in \mathbb{N}^{< \mathbb{N}}} r_{\eta } e_{\eta } \Big\Vert \geq \Big\Vert \sum _{\eta \subset (n_{1}, \dots , n_{l})} r_{\eta } e_{\eta } \Big\Vert + 4^{-l-2} \Big\Vert \sum _{\eta \supsetneqq (n_{1}, \dots , n_{l})} r_{\eta } e_{\eta } \Big\Vert $$
for $ \sum _{\eta \in \mathbb{N}^{< \mathbb{N}}} r_{\eta } e_{\eta } \in E_{n_{1}, \dots , n_{l}} $. Hence, it follows from (\ref{thmtree4}) and Proposition \ref{propfrech} that
\begin{equation} \label{thmtree5}
\forall k : \textrm{$ (P_{T}E_{n_{1}, \dots , n_{l}, k})^{*} $ is F-smooth} \quad \Rightarrow \quad \textrm{$ (P_{T}E_{n_{1}, \dots , n_{l}})^{*} $ is F-smooth.}
\end{equation}

Now, using (\ref{thmtree5}) and the assumption that the dual of $ P_{T}E = P_{T}E_{\emptyset } $ is not Fr\'echet smooth, one can construct a sequence $ n_{1}, n_{2}, \dots $ of natural numbers such that the dual of $ P_{T}E_{n_{1}, \dots , n_{l}} $ is not Fr\'echet smooth for each $ l \in \mathbb{N} \cup \{ 0 \} $. As $ T $ is well-founded, there is $ l $ such that $ (n_{1}, \dots , n_{l}) \notin T $. For some $ L < l $, we have
$$ P_{T}E_{n_{1}, \dots , n_{l}} = \mathrm{span} \big\{ e_{n_{1}, \dots , n_{i}} : 0 \leq i \leq L \big\} . $$
The space $ P_{T}E_{n_{1}, \dots , n_{l}} $ is finite-dimensional in particular. By (\ref{thmtree2}), it is isometric to $ \mathrm{span} \{ f_{i+1} : 0 \leq i \leq L \} $, so it is strictly convex by (2) from Proposition \ref{propbr}. Its dual is G\^ateaux smooth \cite[Fact 8.12]{fhhmpz}. Since the G\^ateaux and Fr\'echet smoothness coincide for finite-dimensional spaces, the dual of $ P_{T}E_{n_{1}, \dots , n_{l}} $ is Fr\'echet smooth. This is a contradiction, and (b) is proved.
\end{proof}

\begin{corollary} \label{cortree}
There are Borel mappings $ \Phi , \Psi : \mathrm{Tr} \rightarrow \mathcal{SE}(C([0,1])) $ such that

{\rm (a)} if $ T $ is ill-founded, then $ \Phi (T) $ and $ \Psi (T) $ are isometrically universal for all separable Banach spaces,

{\rm (b)} if $ T $ is well-founded, then $ (\Phi (T))^{*} $ and $ \Psi (T) $ are reflexive and Fr\'echet smooth.
\end{corollary}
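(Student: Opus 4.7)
The plan is to realise the tree subspaces from Theorem~\ref{thmtree} inside $C([0,1])$ via fixed isometric embeddings. I would fix an isometric embedding $j\colon E\to C([0,1])$, set $\phi_{\eta}=j(e_{\eta})$, and define $\Phi(T)=\overline{\mathrm{span}}\{\phi_{\eta}:\eta\in T\}$; the subspace $E'=\overline{\mathrm{span}}\{e_{\eta}^{*}:\eta\in\mathbb{N}^{<\mathbb{N}}\}$ of $E^{*}$ is separable, so I would likewise fix an isometric embedding $j'\colon E'\to C([0,1])$, set $\psi_{\eta}=j'(e_{\eta}^{*})$, and define $\Psi(T)=\overline{\mathrm{span}}\{\psi_{\eta}:\eta\in T\}$. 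To check Borel measurability, for each open $U\subset C([0,1])$ I would write
\[
\{T\in\mathrm{Tr}:\Phi(T)\cap U\neq\emptyset\}=\bigcup_{(\Delta,q)}\{T\in\mathrm{Tr}:\Delta\subset T\},
\]
where the countable union ranges over finite $\Delta\subset\mathbb{N}^{<\mathbb{N}}$ and $q\colon\Delta\to\mathbb{Q}$ satisfying $\sum_{\eta\in\Delta}q_{\eta}\phi_{\eta}\in U$; this equality uses the density of rational linear combinations in $\Phi(T)$. Each $\{T:\Delta\subset T\}$ is clopen in $\mathrm{Tr}$, so this set is open, hence Borel; the same argument applies to $\Psi$.

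Property (a) is immediate from Theorem~\ref{thmtree}(a): if $T$ is ill-founded with infinite branch $(n_{1},n_{2},\ldots)$, then $\Phi(T)$ contains an isometric copy of $\overline{\mathrm{span}}\{e_{n_{1},\ldots,n_{k}}:k\in\mathbb{N}\cup\{0\}\}$ and $\Psi(T)$ contains an isometric copy of $\overline{\mathrm{span}}\{e_{n_{1},\ldots,n_{k}}^{*}:k\in\mathbb{N}\cup\{0\}\}$, both of which are isometrically universal for all separable Banach spaces.

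For (b), suppose $T$ is non-empty and well-founded (the case $T=\emptyset$ gives $\{0\}$, which is trivially Fr\'echet smooth and reflexive). Then $\Phi(T)\cong P_{T}E$, so $(\Phi(T))^{*}$ is Fr\'echet smooth by Theorem~\ref{thmtree}(b), and hence reflexive, since a Banach space whose dual is Fr\'echet smooth is reflexive (see \cite[Theorem 8.6]{fhhmpz}). For $\Psi(T)$, the key step is to identify it with $(P_{T}E)^{*}$: the restriction map $r\colon \Psi(T)\to(P_{T}E)^{*}$, $e^{*}\mapsto e^{*}|_{P_{T}E}$, is an isometric embedding by the identity (\ref{thmtree1}) established in the proof of Theorem~\ref{thmtree}. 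Since $P_{T}E$ is reflexive, its basis $\{e_{\eta}:\eta\in T\}$ is shrinking, so the biorthogonal functionals $\{e_{\eta}^{*}|_{P_{T}E}:\eta\in T\}$ span $(P_{T}E)^{*}$; therefore $r$ is surjective and $\Psi(T)\cong(P_{T}E)^{*}$ is Fr\'echet smooth and reflexive. The most delicate step is the Borel measurability verification, though the standard computation above suffices; everything else is a direct application of Theorem~\ref{thmtree} together with classical Schauder basis theory.
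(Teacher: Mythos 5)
Your proposal is correct and follows the paper's proof essentially step for step—the same fixed isometric embeddings defining $\Phi$ and $\Psi$, the same Borel-measurability argument via finite subsets and rational coefficients, and the same appeal to Theorem~\ref{thmtree} for properties (a) and (b). The only variation is in the treatment of $\Psi(T)$ for well-founded $T$: the paper just uses the isometric embedding of $\Psi(T)$ into $(\Phi(T))^{*}$ coming from~(\ref{thmtree1}) together with the fact that Fr\'echet smoothness and reflexivity pass to closed subspaces, whereas you additionally invoke James' theorem (a Schauder basis of a reflexive space is shrinking) to show this embedding is onto, obtaining an isometric isomorphism $\Psi(T)\cong(\Phi(T))^{*}$—a correct, slightly stronger conclusion at the cost of one extra classical ingredient.
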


\begin{proof}
Let
$$ I : E \rightarrow C([0,1]) \quad \textrm{and} \quad J : \overline{\mathrm{span}} \big\{ e_{\eta }^{*} : \eta \in \mathbb{N}^{< \mathbb{N}} \big\} \rightarrow C([0,1]) $$
be isometric embeddings. For a tree $ T $, let
$$ \Phi (T) = I \big( \overline{\mathrm{span}} \big\{ e_{\eta } : \eta \in T \big\} \big) , $$
$$ \Psi (T) = J \big( \overline{\mathrm{span}} \big\{ e_{\eta }^{*} : \eta \in T \big\} \big) , $$
where $ \mathrm{span} \, \emptyset \, $ is defined as $ \{ 0 \} $ (cf. this with the construction of $ E(\theta ) $ in \cite[p. 169]{bossard1} or with the analogical constructions in \cite{bossard2, godefroy, godkal}).

The mappings $ \Phi , \Psi $ are Borel, since, for an open $ V \subset C([0,1]) $, the sets $ \{ T \in \mathrm{Tr} : \Phi (T) \cap V \neq \emptyset \} $ and $ \{ T \in \mathrm{Tr} : \Psi (T) \cap V \neq \emptyset \} $ are open in $ \mathrm{Tr} $. This can be checked via
$$ \Phi (T) \cap V \neq \emptyset \quad \Leftrightarrow \quad \textrm{$ \exists M \subset \mathbb{N}^{< \mathbb{N}} $ finite, $ I \big( \mathrm{span} \{ e_{\eta } : \eta \in M \} \big) \cap V \neq \emptyset $ : $ M \subset T $} $$
and the analogical equivalence for $ \Psi $.

Our condition (a) is an immediate consequence of condition (a) from Theorem~\ref{thmtree}. Similarly, the part of (b) concerning $ \Phi (T) $ is a consequence of (b) from Theorem~\ref{thmtree} and the fact that a space is reflexive if its dual is Fr\'echet smooth \cite[Theorem~8.6]{fhhmpz}. To prove the part of (b) concerning $ \Psi (T) $, it is sufficient to realize that $ \Psi (T) $ embeds isometrically to $ (\Phi (T))^{*} $ by equality (\ref{thmtree1}) which was proved in the proof of Theorem~\ref{thmtree}.
\end{proof}

\begin{remark}
Using Corollary \ref{cortree}, one can show that the families of Fr\'echet smooth spaces and of spaces with Fr\'echet smooth dual are coanalytic non-Borel (cf. with \cite[Corollary 3.3]{bossard2}). The same holds for the G\^ateaux smoothness.
\end{remark}

\begin{theorem} \label{thmmain}
Let $ X $ be a separable Banach space. If one of the conditions
\begin{itemize}
\item $ X $ contains an isometric copy of every separable Banach space with Fr\'echet smooth dual space,
\item $ X $ contains an isometric copy of every separable reflexive Fr\'echet smooth Banach space,
\end{itemize}
is satisfied, then $ X $ is isometrically universal for all separable Banach spaces.
\end{theorem}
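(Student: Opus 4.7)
The plan is to combine Corollary \ref{cortree} with the standard Bossard-style descriptive-set-theoretic argument. Fix a separable $ X $ satisfying one of the two hypotheses, and let $ \mathcal{C} $ denote the corresponding class: either separable Banach spaces with Fr\'echet smooth dual (use $ \Phi $), or separable reflexive Fr\'echet smooth spaces (use $ \Psi $). Without loss of generality regard $ X $ as a subspace of $ C([0,1]) $, and set
$$ \mathcal{A} = \big\{ F \in \mathcal{SE}(C([0,1])) : F \textrm{ is linearly isometric to a subspace of } X \big\} . $$
The first preparatory step is to verify that $ \mathcal{A} $ is analytic in $ \mathcal{SE}(C([0,1])) $. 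This is a classical fact proved by parameterizing potential isometric embeddings via countable dense sequences: an element $ F $ lies in $ \mathcal{A} $ iff there exists a sequence $ (x_{n}) $ in $ X $ whose norm structure matches that of a dense sequence Borel-coded from $ F $, and this is an analytic condition.

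The second step is to plug in Corollary \ref{cortree}. For every well-founded tree $ T $, the space $ \Phi (T) $ has Fr\'echet smooth dual and the space $ \Psi (T) $ is reflexive Fr\'echet smooth; hence in each version of the hypothesis the corresponding image lies (up to isometry) in $ \mathcal{C} $, and therefore in $ \mathcal{A} $ by the assumption on $ X $. Consequently, the preimage $ \Phi ^{-1}(\mathcal{A}) $ (respectively $ \Psi ^{-1}(\mathcal{A}) $) is an analytic subset of $ \mathrm{Tr} $ containing the entire set $ \mathrm{WF} $ of well-founded trees.

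The third step invokes the boundedness principle: $ \mathrm{WF} $ is a complete coanalytic subset of $ \mathrm{Tr} $ and in particular is not analytic. Any analytic superset of $ \mathrm{WF} $ must therefore contain some ill-founded tree $ T_{0} $. For such a $ T_{0} $, Corollary \ref{cortree}(a) asserts that $ \Phi (T_{0}) $ (resp. $ \Psi (T_{0}) $) is itself isometrically universal for all separable Banach spaces; and by the membership $ T_{0} \in \Phi ^{-1}(\mathcal{A}) $ (resp. $ \Psi ^{-1}(\mathcal{A}) $) it embeds isometrically into $ X $. Composing embeddings $ Y \hookrightarrow \Phi (T_{0}) \hookrightarrow X $ for any separable $ Y $, we conclude that $ X $ is isometrically universal.

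The main obstacle is the first step, the analyticity of $ \mathcal{A} $. Everything else is essentially formal once the Borel maps $ \Phi , \Psi $ from Corollary \ref{cortree} and the coanalytic non-Borelness of $ \mathrm{WF} $ are in hand. The analyticity, while standard in this area (it underlies Bossard's framework), requires a careful coding: one parameterizes isometric embeddings by elements of a Polish space of sequences and expresses the norm-preserving condition in terms of finitely many rational linear combinations. It is worth noting that the argument does not rely on any specific feature of Fr\'echet smoothness beyond what Corollary \ref{cortree} encodes, so the same scheme simultaneously handles both bullets of the theorem.
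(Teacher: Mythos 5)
Your proposal follows essentially the same route as the paper: establish analyticity of $ \mathcal{A} $ (the paper cites \cite[Lemma 7(ii)]{godefroy} for this; you sketch the standard coding argument behind it), pull back along the Borel maps $ \Phi $ and $ \Psi $ from Corollary \ref{cortree}, and use that $ \mathrm{WF} $ is coanalytic and not analytic to find an ill-founded tree in the preimage, whose image is then an isometrically universal subspace of $ X $. The argument is correct and matches the paper's.
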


\begin{proof}
We just follow the lines of the proof of \cite[Theorem 9]{godefroy}. By \cite[Lemma 7(ii)]{godefroy}, the set
$$ \mathcal{A} = \big\{ Y \in \mathcal{SE}(C([0,1])) : \textrm{$ X $ contains an isometric copy of $ Y $} \big\} $$
is an analytic subset of $ \mathcal{SE}(C([0,1])) $.

Let $ \Phi $ and $ \Psi $ be as in Corollary \ref{cortree}. The sets $ \Phi ^{-1}(\mathcal{A}) $ and $ \Psi ^{-1}(\mathcal{A}) $ are analytic (see, e.g., \cite[(14.4)]{kechris}) and, by the assumption of the theorem, one of them contains all well-founded trees. This one contains an ill-founded tree, as the set of well-founded trees is not analytic (see, e.g., \cite[(27.1) and the comment below (22.9)]{kechris}). Hence, $ \mathcal{A} $ contains a space which is isometrically universal for all separable Banach spaces.
\end{proof}

\end{document}